\newtheorem{theorem}{Theorem}[subsection]
\newtheorem*{theorem*}{Theorem}
\newtheorem{lemma}{Lemma}[subsection]
\newtheorem{proposition}{Proposition}[subsection]
\newtheorem{corollary}{Corollary}[subsection]
\newtheorem*{conjecture*}{Conjecture}
\newtheorem*{remark*}{Remark}
\newtheorem{definition}{Definition}[subsection]
\newcommand{\bcen}{\begin{center}}      \newcommand{\ecen}{\end{center}}
\newcommand{\bay}{\begin{array}}      \newcommand{\eay}{\end{array}}
\newcommand{\beq}{\begin{eqnarray*}}      \newcommand{\eeq}{\end{eqnarray*}}
\def\lr#1{\langle #1\rangle}
\newcommand{\cal}{\mathcal}
\def\az{\alpha}    
\def\bz{\beta}     
\def\gz{\gamma}
\def\dz{\delta}  
\def\llz{\Lambda}      
\def\lz{\lambda}    
     \def\cp{{\cal P}} \def\cc{{\cal C}}
\def\ch{{\cal H}}   \def\ct{{\cal T}} \def\co{{\cal O}}
\def\ca{{\cal A}}
\def\cb{{\cal B}}
\def\cz{{\cal Z}}
\def\ct{{\cal T}}
\def\cn{{\cal N}}
\def\bbn{{\mathbb N}}  \def\bbz{{\mathbb Z}}  \def\bbq{{\mathbb Q}}
\def\bbf{{\mathbb F}}   \def\bbe{{\mathbb E}} \def\bbf{{\mathbb F}}
\def\fq{{\mathbb F}_q}
  \def\bbc{{\mathbb C}}
\def\bba{{\mathbb A}}
\def\te{{\tilde E}}
\def\bi{{\bf 1}}
\def\fk#1{{\mathfrak #1}}
\def\lra{\longrightarrow}  
\def\ra{\rightarrow}
\def\hom{\operatorname{Hom}}
\def\Im{\operatorname{Im}}
\def\ext{\operatorname{Ext}}
\def\aut{\operatorname{Aut}}
\def\dim{\operatorname{dim}}
\def\cdim{\operatorname{codim}}
\def\min{\operatorname{min}}
\def\udim{\operatorname{\underline {dim}}}
\def\ed{\operatorname{End}}
\def\Hom{\operatorname{Hom}}
\def\Ext{\operatorname{Ext}}
\def\ol{\overline}
\def\mod{\operatorname{mod}}  
\def\Im{\operatorname{Im}}
\def\uq2{U_q(\hat{sl}_2)}
\def\fq{\bbf_q}
\def\wm{{\bf m}}
\def\nd{{\noindent}}
\def\mk{{\medskip}}
\def\hs{{\hskip 1cm}}
\def\det{\operatorname{det}}
\title[Canonical bases]{ Rational Ringel-Hall algebras, Hall polynomials of affine type and Canonical bases}
\thanks{The research was
supported in part by  NSCF grant 10931006 and by STCSM(12ZR1413200)  \\
${\quad}$The research was also supported in part by Research
Institute for Mathematical Sciences, Kyoto University, Kyoto, California Institute of Technology, Pasadena, California and Chern Institute of Mathematics, Nankai University, Tianjin,China}
\author{Guanglian Zhang}
\address{Department of Mathematics\\
Shanghai Jiao Tong University\\
Shanghai 200240, China}
 \email{g.l.zhang@sjtu.edu.cn}
\date{\today}
\begin{document}

\begin{abstract}
In this paper, the rational Ringel-Hall algebras for tame quivers are
introduced and are identified with the positive part of the
quantum extended Kac-Moody algebras. By using the rational Ringel-Hall algebras, we show that the existence of Hall polynomials for tame quiver algebras.
The PBW bases are constructed and new classes of perverse sheaves are shown to have strong purity property. These
allows us to construct the canonical bases of the positive part of the
quantum extended Kac-Moody algebras.
\end{abstract}

\maketitle

\bigskip
\bigskip

\centerline{\bf 0. Introduction}

\bigskip
The early nineties of the last century witnessed the invention of
canonical bases, Lusztig claimed he found it first, and now is commonly accepted~\cite{L1,L2,L3}. Since then, canonical bases have been playing a
vital role in representation theory of quantum groups, Hecke
algebras, and quantized $q-$Schur algebras ~\cite{A,BN,LXZ,F1,F2,
Ro,VV}.

Before that, Ringel--Hall algebras (of abelian categories)
were introduced by Ringel~\cite{R2} in order to obtain a categorical
version of Gabriel's theorem~\cite{BGP}. Ringel~\cite{R1} showed
that the Ringel--Hall algebra of the category of finite-dimensional
representations over a finite field of a Dynkin quiver, after a
twist by the Euler form, is isomorphic to the positive part of the
quantum group associated to the underlying graph of the quiver. This
was generalised by Green~\cite{G} to all quivers, with the whole
Ringel--Hall algebra replaced by its subalgebra generated by the
1-dimensional representations (the composition algebra).

The above relationship between Ringel--Hall algebras and quantum
groups enabled Lusztig to give geometric construction of canonical
bases by using quiver varieties~\cite{L1,L2,L3}. For finite types he
also has an algebraic construction ~\cite{L4}, and this was
generalised to affine types by Nakajima-Back in~\cite{BN}, and by Lin, Xiao and the author
in~\cite{LXZ}.

In this paper we study canonical bases for quantum extended affine
Kac-Moody algebras. We follow Lusztig's geometric approach. For this
purpose, we introduce the \emph{ratinal Ringel--Hall algebras $\ch^{r, x_1,\cdots, x_t}(Q)$}.
Thanks to a theorem of Sevenhant and Van den Bergh~\cite{SV}, the
whole Ringel--Hall algebra $\ch(Q)$ of a tame quiver $Q$ can be
obtained from the composition algebra $\cc(Q)$ by adding some
imaginary simple roots. The rational Ringel--Hall algebra $\ch^r(Q)$
will be defined as a subalgebra of $\ch(Q)$, obtained from $\cc(Q)$
by adding certain imaginary simple roots. It will be shown that
$\ch^r(Q)$ is isomorphic to the positive part of the quantum
extended Kac-Moody algebra of the underlying graph of $Q$. To
construct the canonical bases, we start with constructing a PBW
basis and, as in ~\cite{L1}, proving that certain perverse sheaves
over the associated quiver varieties have the strong purity property. It
then follows that certain elements in the rational Ringel--Hall
algebra associated to these sheaves form the canonical bases. All the results for $\ch^r(Q)$ we have obtained are also true for the other ratinal Ringel--Hall algebras $\ch^{r, x_1,\cdots, x_t}(Q)$. In
\cite {KS}, Kang and Schiffmann obtained canonical bases of quantum
generalized Kac-Moody algebras via construct a class quiver with
edge loops. An important difference between our construction and
Kang and Schiffmann's one is that our canonical bases are
made of simple perverse sheaves rather than semisimple perverse
sheaves. That is, our canonical bases coincide with classical
canonical bases of Lusztig's version.

This work is another motivated by the application of canonical bases
of the Fock space over the quantum affine $\mathfrak{sl}_n$ in the
study of decomposition numbers: the Lascoux--Leclerc--Thibon
conjecture ~\cite{LLT} which was solved by Ariki~\cite{A}, and the
Lusztig conjecture for $q$-Schur algebras by Varagnolo and
Vasserot~\cite{VV}. These Fock spaces are of type $A$ and level $1$.
Those of type $A$ and of higher levels are studied in~\cite{A,KS}
and other papers. Our aim is to solve Lusztig's conjecture for
$q$-Schur algebras of other types. The first step is to find
appropriate Fock spaces. We propose that the quantum extended affine
Kac--Moody algebra modulo certain elements corresponding to unstable
orbits in Nakajima's sense, considered as a module over the quantum
affine algebra, is the correct Fock space. For type $A$, this is verified.

 The paper is organized as follows. In \S 1 we
 give a quick review of the definitions of Ringel-Hall algebras and Double Ringel-Hall algebras.
In \S 2 we define the rational Ringel-Hall algebras $\ch^{r, x_1, \cdots, x_t }(\llz)$
and study the relations between the rational Ringel-Hall algebras
and the quantized universal enveloping algebras. We prove
Proposition~2.2.1 and from this, we point out that the quotient of
the rational Ringel-Hall algebras $\ch^{r}(\llz)$ modulo unstable orbits is
isomorphic to $q-$ Fock space in the case of type $A$. In \S 3 we
show that  the rational Ringel-Hall algebra $\ch^r(\llz)$ is
isomorphic to the positive part  $\mathbb{U}^+$ of the quantum
extended Kac-Moody algebra. In \S 4 we construct the $PBW$ type
basis for the positive part  $\mathbb{U}^+$ of the quantum extended
Kac-Moody algebras. In \cite{H}, A.Hubery proved the existence
of Hall polynomials on the tame quivers for Segre classes. Since the work of Hubery, I am  the first  to give a complete proof for the existence of
Hall polynomials of the tame quivers, which is arranged in section 5.
In \S 6 we give the main theorem of this paper, that is, a description of the canonical basis of $\ch^r(\llz)\cong\mathbb{U}^+$. In $\S 7$ we prove that the closure of
semi-simple objects in $\ct_i$ have strong purity property.
In $\S 8$ we give the proof of Theorem 6.1.1. From Lemma 8.1.1, we point out that the basis of $\mathbf{U}_n^{+}$ defined by \cite{VV} is Lusztig's canonical bases of $\mathbf{U}_n^{+}.$
Moreover, using our new canonical basis, we prove that the coefficients of
Hall polynomial $\varphi^{X_3}
 _{X_1X_2}$ in section 5, are integers.

\mk\nd{\bf Acknowledgments.} I would like to express my sincere
gratitude to Hiraku Nakajima, Xinwen Zhu and Jie Xiao for a number of interesting discussions.


\section{The Ringel--Hall algebra}\label{s:hall-alg}

Throughout the paper, let $\bbf_q$ denote a finite field with $q$
elements, and $k=\overline{\bbf}_q$ be the algebraic closure of $\bbf_q$.

\subsection{}\label{ss:1.1} A \emph{quiver} $Q=(I,H,s,t)$ consists of  a
vertex set $I$,  an arrow set $H$, and two maps $s,t:H\rightarrow I$
such that an arrow $\rho\in H$ starts at $s(\rho)$ and terminates at
$t(\rho).$ The two maps $s$ and $t$ extends naturally to the set of paths.
A \emph{representation} $V$ of $Q$ over a field $F$ is a collection $\{V_i:i\in I\}$ of $F$-vector spaces
and a collection $\{V(\rho):V_{s(\rho)}\rightarrow V_{t(\rho)}:\rho\in H\}$ of $F$-linear maps.

Let $\llz=\bbf_q Q$ be the \emph{path algebra} of $Q$ over the field $\bbf_q$.
Precisely, $\llz$ is a $\bbf_q$-vector space with basis all paths of $Q$ (including the trivial paths attached to all vertices), and the product $pq$ of two paths $p$ and $q$ is the concantenation of $p$ and $q$ if $s(p)=t(q)$ and is $0$ otherwise.
The
category of all finite-dimensional left $\llz$-modules, namely finite left $\llz$-modules, is equivalent to the category of finite-dimensional representations of
$Q$ over $\bbf_q.$ We shall simply identify $\llz$-modules with
representations of $Q$, and call a $\llz$-module \emph{nilpotent} if the corresponding representation is nilpotent. The category of nilpotent $\llz$-modules will be denoted by $\mod\llz$.

The set of isomorphism classes of nilpotent simple $\llz$-modules
is naturally indexed by the set $I$ of vertices of $Q.$ Hence the
Grothendieck group $G(\llz)$ of $\mod\llz$ is the free Abelian group
$\bbz I$. For each nilpotent $\llz$-module $M$, the dimension vector
$\udim M=\sum_{i\in I}(\dim M_i)i$ is an element of $ G(\llz)$.

The Euler form $\lr{-,-}$ on $G(\llz)=\bbz I$ is defined by
\beq\hs \lr{\az,\bz}=\sum_{i\in I}a_ib_i-\sum_{\rho\in
H}a_{s(\rho)}b_{t( \rho)}\eeq for $\az=\sum_{i\in I}a_i i$ and
$\bz=\sum_{i\in I}b_i i$ in $\bbz I.$ For any nilpotent $\llz$-modules $M$ and $N$ one has
$$\lr{\udim M, \udim
N}=\dim_{\fq}\hom_{\llz}(M,N)-\dim_{\fq}\ext_{\llz}(M,N).$$ The
symmetric Euler form is defined as
$$(\az,\bz)=\lr{\az,\bz}+\lr{\bz,\az}\ \ \text{for}\ \
\az,\bz\in\bbz I.$$ This gives rise to a symmetric generalized
Cartan matrix $C=(a_{ij})_{i,j\in I}$ with $a_{ij}=(i,j).$ It is
easy to see that $C$ is independent of the field $\fq$ and the
orientation of $Q.$


\subsection{The Ringel--Hall algebra}\label{ss:hall-alg}
Let $Q$ be a quiver, and $\bbf_qQ$ be the path algebra of $Q$ over the finite field $\bbf_q$.

Let $v=v_q=\sqrt q\in \bbc$ and $\cp$
 be the set of isomorphism classes of finite-dimensional nilpotent $\llz$-modules.
The \emph{(twisted) Ringel--Hall algebra} $\ch^*(\llz)$ is defined as the $\bbq(v)$-vector space with basis $\{u_{[M]}:[M]\in\cp\}$ and with multiplication  given by
$$u_{[M]}\ast
u_{[N]}=v^{\lr{\udim M, \udim
N}}\sum_{[L]\in\cp}g^{L}_{MN}u_{[L]}.$$
where $g_{MN}^{L}$ is the number of $\llz$-submodules
$W$ of $L$ such that $W\simeq N$ and $L/W\simeq M$ in $\mod \llz$. Its subalgebra generated by $\{u_{[M]}:M \text{ is a simple nilpotent } \Lambda \text{ module}\}$ is called the \emph{composition subalgebra} of $\ch^*(\llz)$ or the \emph{composition Ringel--Hall algebra} of $\llz$, and denoted by $\cc^*(\llz)$.
The Ringel--Hall algebra $\ch^*(\llz)$ and the composition algebra $\cc^*(\llz)$ is graded by $\bbn I$, namely, by dimension vectors of modules, since $g_{MN}^L\neq 0$ if and only if there is a short exact sequence $0\rightarrow N\rightarrow L\rightarrow M\rightarrow 0$, which implies that $\udim L =\udim M+\udim N$.
Following [R3], for any nilpotent
$\llz$-module $M$, we denote $$\lr{M}=v^{-\dim
 M+\dim\ed_{\llz}(M)}u_{[M]}.$$  Note that $\{ \lr{M} \;|\; M \in
\cp\}$ is  a $\bbq(v)$-basis of $\ch^*(\llz)$.

The $\bbq(v)$-algebra $ \ch^*(\llz)$ depends on $q(=v^2)$. We will use
$ \ch_q^*(\llz)$ to indicate the dependence on $q$ when such a need
arises.

\subsection{A construction by Lusztig}\label{ss:construction-lusztig}
Let  $V=\bigoplus_{i\in I}V_i$ be a finite-dimensional
$I$-graded $k$-vector space with a given
$\bbf_q$-rational structure by the Frobenius map $F:k\rightarrow k, a\mapsto a^q$. Let $\bbe_V$ be
the subset of $\bigoplus_{\rho\in H}\hom(V_{s(\rho)},V_{t(\rho)})$
consisting of elements which define nilpotent representations of $Q$. Note that
$\bbe_V=\bigoplus_{\rho\in H}\hom(V_{s(\rho)},V_{t(\rho)})$ when $Q$
has no oriented cycles. The space of $\bbf_q$-rational points of
$\bbe_V$ is the fixed-point set $\bbe^F_V.$

Let $G_V=\prod_{i\in I}GL(V_i)$, and its subgroup of
$\bbf_q-$rational points be $G^F_V$. Then the group $G_V=\prod_{i\in
I}GL(V_i)$ acts naturally on  $\bbe_V$ by
$$(g,x)\mapsto g\bullet x=x'\ \ \text{where}\ \
x'_{\rho}=g_{t(\rho)}x_{\rho}g^{-1}_{s(\rho)}\ \ \text{for all}\ \
\rho\in H.$$ For $x\in\bbe_V$, we denote by $\co_x$ the orbit of $x$.
This action restricts to an action of the finite group $G^F_V$ on $\bbe^F_V.$

For $\gz\in\bbn I,$ we fix a $I$-graded $k$-vector space $V_{\gz}$
with $\udim V_{\gz}=\gz.$ We set $\bbe_{\gz}=\bbe_{V_{\gz}}$ and
$G_{\gz}=G_{V_{\gz}}.$ For $\az,\bz\in\bbn I$ and $\gz=\az +\bz,$ we
consider the diagram
$$\xymatrix{\bbe_{\az}\times\bbe_{\bz} & \bbe' \ar[l]_(0.35){p_1}\ar[r]^{p_2}&\bbe''\ar[r]^{p_3}& \bbe_{\gz}.}$$ Here $\bbe''$ is the set of all pairs $(x,W)$,
consisting of $x\in\bbe_{\gz}$ and an $x$-stable $I$-graded subspace
$W$ of $V_{\gz}$ with $\udim W=\bz$, and $\bbe'$ is the set of all
quadruples $(x,W,R',R'')$, consisting of $(x,W)\in\bbe''$ and two
invertible linear maps $R':k^{\bz}\ra W$ and $R'':k^{\az}\ra
k^{\gz}/W.$  The maps are defined in an obvious way:
$p_2(x,W,R',R'')=(x,W),$ $p_3(x,W)=x,$ and
$p_1(x,W,R',R'')=(x',x''),$ where
$x_{\rho}R'_{s(\rho)}=R'_{t(\rho)}x'_{\rho}$ and
$x_{\rho}R''_{s(\rho)}=R''_{t(\rho)}x''_{\rho}$ for all $\rho\in H.$
By Lang's Lemma, the varieties and morphisms in this diagram are
naturally defined over $\bbf_q.$ So we have
$$\xymatrix{\bbe^F_{\az}\times\bbe^F_{\bz} & \bbe'^F \ar[l]_(0.35){p_1}\ar[r]^{p_2}&\bbe''^F\ar[r]^{p_3}& \bbe^F_{\gz}.}$$
For $M\in\bbe_{\az}, N\in\bbe_{\bz}$ and $L\in\bbe_{\az+\bz},$ we
define
$${\bf Z}=p_2p_1^{-1}(\co_M\times \co_N)\subseteq E'',\qquad {\bf Z}_{L,M,N}={\bf Z}\cap p_3^{-1}( \co_L).$$

For any map $p: X\rightarrow Y$ of finite sets,
$p^*:\bbc(Y)\rightarrow \bbc(X)$ is defined by $p^*(f)(x)=f(p(x))$
and $p_!: \bbc(X)\rightarrow \bbc(Y)$ is defined by $
p_!(h)(y)=\sum_{x \in p^{-1}(y)}h(x),$ on the integration along the
fibers. Let $\bbc_{G^F}(\bbe^F_V)$ be the space of $G^F_V$-invariant
functions $\bbe^F_V\ra \bbc ( \text{ or } \overline{Q_l}).$ Given
$f\in\bbc_{G^F}(\bbe^F_\az)$ and $g\in\bbc_{G^F}(\bbe^F_\bz)$, there
is a unique $h\in \bbc_G(\bbe''^F)$ such that
$p_2^*(h)=p_1^*(f\times g).$ Then define $f\circ g$ by
$$f\circ g=(p_3)_!(h)\in\bbc_{G^F}(\bbe^F_{\gz}).$$

 Let $${\wm}(\az,\bz)= \sum_{i\in
I}a_ib_i+\sum_{\rho\in H}a_{s(\rho)}b_{t( \rho)}.$$ We again define
the multiplication in the $\bbc$-space ${\bf K}=\bigoplus_{\az\in\bbn
I}\bbc_{G^F}(\bbe^F_{\az})$ by
$$f\ast g=v_q^{-{\wm}(\az,\bz)}f\circ g$$ for all $f\in\bbc_{G^F}(\bbe^F_\az)$ and
$g\in\bbc_{G^F}(\bbe^F_\bz).$ Then $({\bf K},\ast)$ becomes an
associative $\bbc$-algebra.

For $M\in\bbe^F_{\az}$, let  $\co_M\subset\bbe_{\az}$ be the
$G_{\az}$-orbit of $M.$ We take
$\bi_{[M]}\in\bbc_{G^F}(\bbe^F_{\az})$ to be the characteristic
function of $\co^F_M,$ and set $f_{[M]}=v_q^{-\dim\co_M}\bi_{[M]}.$
We consider the subalgebra $({\bf L},\ast)$ of $({\bf K},\ast)$
generated by $f_{[M]}$ over $\bbq(v_q),$ for all $M\in\bbe^F_{\az}$
and all $\az\in\bbn I.$ In fact ${\bf L}$ has a $\bbq(v_q)$-basis
$\{f_{[M]}|M\in\bbe^F_{\az}, \az\in\bbn I\}.$ Since  $\bi_{[M]}\circ
\bi_{[N]}(W)=g^W_{M N}$ for any $W\in\bbe^F_{\gz},$ we have


\begin{proposition}\cite{LXZ}\label{p:1.3.1}
The linear map
$\varphi:({\bf L},\ast)\lra \ch^*(\llz)$ defined by
$$\varphi(f_{[M]})=\lr{M},\ \ \ \text{for all}\ [M]\in\cp$$
is an isomorphism of associative $\bbq(v_q)$-algebras.
\end{proposition}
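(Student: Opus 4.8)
The plan is to show directly that $\varphi$ is a well-defined bijective linear map that respects the multiplications. Since $\{f_{[M]} : [M]\in\cp\}$ is by construction a $\bbq(v_q)$-basis of $({\bf L},\ast)$ and $\{\lr{M} : [M]\in\cp\}$ is a $\bbq(v_q)$-basis of $\ch^*(\llz)$, the assignment $f_{[M]}\mapsto\lr{M}$ extends uniquely to a linear isomorphism of $\bbq(v_q)$-vector spaces; so the only real content is the compatibility with the products. First I would unwind what $f_{[M]}\ast f_{[N]}$ is: by definition $f_{[M]}=v_q^{-\dim\co_M}\bi_{[M]}$, and $\bi_{[M]}\ast\bi_{[N]}=v_q^{-\wm(\az,\bz)}(\bi_{[M]}\circ\bi_{[N]})$, where the function $\bi_{[M]}\circ\bi_{[N]}$ evaluated at $W\in\bbe^F_\gz$ equals $g^W_{MN}$ (this is the computation $\bi_{[M]}\circ\bi_{[N]}(W)=g^W_{MN}$ recalled just before the statement). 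Hence $\bi_{[M]}\circ\bi_{[N]}=\sum_{[L]\in\cp}g^L_{MN}\,\bi_{[L]}$.

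Next I would collect the exponents of $v_q$. Combining the above, $f_{[M]}\ast f_{[N]} = v_q^{-\dim\co_M-\dim\co_N-\wm(\az,\bz)}\sum_{[L]}g^L_{MN}\bi_{[L]} = \sum_{[L]}v_q^{\dim\co_L-\dim\co_M-\dim\co_N-\wm(\az,\bz)}g^L_{MN}f_{[L]}$, where $\az=\udim M$, $\bz=\udim N$. On the other side, translating through $\varphi$ and using $\lr{M}=v^{-\dim M+\dim\ed_\llz(M)}u_{[M]}$, one has $\varphi(f_{[M]})\ast\varphi(f_{[N]}) = \lr{M}\ast\lr{N} = v^{-\dim M-\dim N+\dim\ed(M)+\dim\ed(N)}\,u_{[M]}\ast u_{[N]}$, and the Ringel–Hall product gives $u_{[M]}\ast u_{[N]} = v^{\lr{\udim M,\udim N}}\sum_{[L]}g^L_{MN}u_{[L]}$. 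So $\varphi(f_{[M]})\ast\varphi(f_{[N]}) = \sum_{[L]}v^{e(L,M,N)}g^L_{MN}\lr{L}$ for a suitable exponent $e(L,M,N)$ that I would read off after re-expressing $u_{[L]}$ in terms of $\lr{L}$.

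The crux is therefore a bookkeeping identity: one must check that the exponent of $v_q=v$ appearing in $f_{[M]}\ast f_{[N]}$, namely $\dim\co_L-\dim\co_M-\dim\co_N-\wm(\udim M,\udim N)$, matches the exponent appearing in $\lr{M}\ast\lr{N}$ whenever $g^L_{MN}\neq 0$. The key ingredients are the orbit-dimension formula $\dim\co_M = \dim G_{\udim M} - \dim\ed_\llz(M)$ (equivalently $\dim\co_M = \sum_i (\dim M_i)^2 - \dim\ed_\llz(M)$ since $G_V=\prod_i GL(V_i)$), the identity $\dim\hom_\llz(M,N)-\dim\ext_\llz(M,N)=\lr{\udim M,\udim N}$ from \S\ref{ss:1.1}, and the defining formulas for $\lr{-,-}$ and for $\wm$, which differ only in the sign of the $\sum_{\rho\in H}a_{s(\rho)}b_{t(\rho)}$ term; the cross terms $\sum_i (\dim M_i)(\dim N_i)$ coming from $\dim G_{\az+\bz}-\dim G_\az-\dim G_\bz$ are exactly what is needed to convert one into the other, and the $\ed$-terms cancel against the $\dim G$-contributions hidden in the $\dim\co$'s. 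This is the step I expect to be the main obstacle — not because it is deep, but because the sign conventions and the twisting factors ($v^{\lr{\udim M,\udim N}}$ in $\ch^*$ versus $v_q^{-\wm(\az,\bz)}$ in ${\bf K}$, together with the normalizations $v^{-\dim M+\dim\ed(M)}$ and $v_q^{-\dim\co_M}$) must be tracked with care. Once this exponent identity is verified for all $[L]$ with $g^L_{MN}\neq 0$, it follows that $\varphi(f_{[M]}\ast f_{[N]})=\varphi(f_{[M]})\ast\varphi(f_{[N]})$, so $\varphi$ is an algebra homomorphism, and being a bijection on bases it is an isomorphism. (This is of course essentially the content of \cite{LXZ} cited in the statement, so I would also simply invoke that reference for the detailed verification.)
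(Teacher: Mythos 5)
Your proposal is correct and is essentially the standard computation; the paper itself only cites \cite{LXZ} for the proof, and the argument there is the same direct bookkeeping you outline. The exponent identity does hold: since $\dim\co_M=\dim G_{\udim M}-\dim\ed_\llz(M)$ and $\dim G_{\az+\bz}-\dim G_\az-\dim G_\bz=2\sum_i a_ib_i$, one gets $\dim\co_L-\dim\co_M-\dim\co_N-\wm(\az,\bz)=\lr{\az,\bz}+\dim\ed(M)+\dim\ed(N)-\dim\ed(L)$, which (using $\dim L=\dim M+\dim N$) is exactly the exponent arising from $\lr{M}\ast\lr{N}$ after rewriting $u_{[L]}$ as $v^{\dim L-\dim\ed(L)}\lr{L}$. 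One small wording quibble: the $\ed$-terms on the two sides \emph{match} rather than ``cancel against the $\dim G$-contributions''; it is the $\dim G$ cross-terms alone that convert $-\wm$ into $\lr{-,-}$.
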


\subsection{The double Ringel--Hall algebra
$\mathscr{D}(\llz)$}\label{ss:double-hall-alg} First, we define a
Hopf algebra $\bar\ch^+(\llz)$ which is a $\bbq(v)$-vector space with
the basis $\{K_{\mu}u_{\alpha}^+|\mu\in\bbz[I],\az\in\cp\},$ whose
Hopf algebra structure is given as

(a) Multiplication (\cite{R1})
\begin{eqnarray*}u_{\az}^+*u_{\bz}^+&=&v^{\lr{\az,\bz}}\sum_{\lz\in\cp}g^{\lz}_{\az\bz}u^+_{\lz},
\text{ for all } \az,\bz\in\cp,\\
K_{\mu}*u_{\az}^+&=&v^{(\mu,\az,)}u_{\az}^+*K_{\mu},\text{ for all
} \az\in\cp,
\mu\in\bbn[I],\\
K_\mu*K_\nu&=&K_\nu*K_\mu\hspace{7pt}=\hspace{7pt}K_{\mu+\nu}, \text{ for all }
\mu,\nu\in\bbn[I].
\end{eqnarray*}

(b)Comultiplication (\cite{G})
\begin{eqnarray*}
\bigtriangleup(u_{\lz}^+)&=&\sum_{\az,\bz\in\cp}v^{\lr{\az,\bz}}
\frac{a_{\az}a_{\bz}}{a_{\lz}}g^{\lz}_{\az\bz}u^+_{\az}K_{\bz}\otimes
u_{\bz}^+, \text{ for all } \lz\in\cp,\\
\bigtriangleup(K_{\mu})&=&K_\mu\otimes K_\mu,\text{ for all } \mu\in\bbn[I]
\end{eqnarray*}
with counit $\epsilon(u_{\lz}^+)=0,$ for all $0\neq \lz\in\cp$, and
$\epsilon(K_\mu)=1.$ Here $a_\lz$ denotes the cardinality of the
finite set $\aut_{\llz}(M)$ with $[M]=\lz.$

(c)Antipode(\cite{X})
\begin{eqnarray*}
   S(u^+_{\lz})&=&\dz_{\lz0}+\sum(-1)^m\sum_{\pi\in\cp,\lz_1,\cdots,\lz_m\in\cp\setminus \{0\}}\times
\\&&\mbox{}v^{2\sum_{i<j}\lr{\lz_i,\lz_j}}\frac{a_{\lz_1}\cdots a_{\lz_m}}{a_{\lz}}g^\lz_{\lz_1\cdots\lz_m}
g^\pi_{\lz_1\cdots\lz_m}K_{-\lz}u^+_\pi, \text{ for all } \lz\in\cp,\\
S(K_\mu)&=&K_{-\mu} \text{  for all } \mu\in\bbz[I].
\end{eqnarray*}
 The subalgebra $\ch^+$ of $\bar\ch^+(\llz)$  generated by $\{u_\lz|\lz\in\cp\}$ is isomorphic to $\ch^*(\llz).$ Moreover, we have an isomorphism of vector spaces $\bar\ch^+(\llz)\cong \ct\otimes \ch^+$ , where $\mathcal {T}$ denotes the torus subalgebra generated by
$\{K_\mu:\mu\in\bbz[I]\}.$

  Dually, we can define a Hopf algebra $\bar\ch^-(\llz)$. Following
  Ringel, we have a bilinear form
  $\varphi:\bar\ch^+(\llz)\times\bar\ch^-(\llz){\lra}\bbq(v)$ defined by
  $$\varphi(K_\mu u_\az^+,K_\nu u^-_{\bz})=v^{-(\mu,\nu)-(\az,\nu)+(\mu,\bz)}\frac{|V_\az|}{a_\az}\dz_{\az\bz}$$
for all $\mu,\nu\in\bbz[I]$ and all $\az,\bz\in \cp.$ Thanks to
\cite{X}, we can form the reduced Drinfeld double
$\mathscr{D}(\llz)$ of the Ringel--Hall algebra of $\llz$, which admits a
triangular decomposition
$$\mathscr{D}(\llz)=\ch^-\otimes\mathcal {T}\otimes\ch^+.$$
It is a Hopf algebra, and the restriction of this structure on
$\bar\ch^-(\llz)=\ch^-\otimes\ct$ and $\bar\ch^+(\llz)=\ct\otimes\ch^+$ are given as above.

The subalgebra of $\mathscr{D}(\llz)$ generated by
$\{u_i^{\pm},K_{\mu}|i\in I, \mu\in\bbz[I]\}$ is also called the
\emph{composition algebra} of $\llz$ and denoted by $\mathcal {C}(\llz).$ It is
 a Hopf subalgebra of $\mathscr{D}(\llz)$ and admits a triangular decomposition
$$\mathcal {C}(\llz)=\mathcal {C}^{-}(\llz)\otimes\mathcal {T}\otimes\mathcal {C}^+(\llz),$$
where $\mathcal {C}^+(\llz)$ is the subalgebra generated by
$\{u_i^+:i\in I\}$ and $\mathcal {C}^{-}(\llz)$ is defined dually.
Moreover, the restriction $\varphi:\mathcal
{C}^+(\llz)\times\mathcal {C}^{-}(\llz){\lra}\bbq(v)$ is
non-degenerate (see\cite{HX}).

 In addition, $\mathscr{D}(\llz)$ admits an involution $\omega$
 defined by
\begin{eqnarray*}
\omega(u_{\lz}^+)=u_{\lz}^{-},~~ \omega(u_{\lz}^{-})=u_{\lz}^{+},
\text{ ~for all ~} \lz\in\cp;
\\
\omega(K_{\mu})=K_{-\mu},\text{ for all } \mu\in\bbn[I].\qquad\qquad
\end{eqnarray*}
We have $\varphi(x,y)=(\omega(x),\omega(y)).$ Obviously, $\omega$
induces an involution of $\mathcal {C}(\llz).$

\subsection{Tame quivers}\label{ss:tame-quiver}
Let $Q=(I,H,s,t)$ be a connected tame quiver, that is, a quiver whose underlying graph is an extended Dynkin diagram
of type $\tilde{A}_n$, $\tilde{D}_n$, $\tilde{E}_6$, $\tilde{E}_7$ or $\tilde{E}_8$, and which has no oriented cycles,
i.e. in $Q$ there are no paths $p$ with $s(p)=t(p)$.
We say that $Q$ is of type $\tilde{A}_{p,q}$ if the underlying graph of $Q$ is of type $\tilde{A}_{p+q-1}$ and there are $p$ clockwise oriented arrows and $q$ anti-clockwise oriented arrows.

Let $\Lambda=\fq Q$ be the path algebra of $Q$ over $\fq$.
Any (nilpotent) finite-dimensional $\Lambda$-module is a direct sum of modules of three types: preprojective, regular, and preinjective. The set of isomorphism classes of preprojective modules (respectively, preinjective modules) will be denoted by $\cp_{prep}$ (respectively, $\cp_{prei}$).  The regular part consists of a family of homogeneous tubes (i.e. tubes of period 1) and a finite number of non-homogeneous tubes, say $\ct_1,\ldots,\ct_l$ respecitvely of periods $r_1,\ldots,r_l$.
We have the following well-known results.


\begin{lemma}\label{l:2.1.1}
\begin{itemize}
\item[(a)] We have $l\leq 3$ and $\sum_{i=1}^l(r_i-1)=|I|-2$.
\item[(b)] Let $P$ be preprojective, $R$ be regular and $I$ be preinjective. Then
\[\Hom(R,P)=\Hom(I,R)=\Hom(I,P)=0,\]
\[\Ext(P,R)=\Ext(R,I)=\Ext(P,I)=0.\]
\item[(c)] Let $R$ and $R'$ be indecomposable regular in different tubes. Then
\[\Hom(R,R')=0=\Ext(R,R').\]
\item[(d)] Let $M\in\ct_i$ for some $1\leqslant i\leqslant l$, and
$$0{\lra}M_2{\lra}M{\lra}M_1{\lra}0$$
be a short exact sequence. Then $M_1\cong I_1\oplus N_1,M_2\cong
P_2\oplus N_2$, where $P_2$ is preprojective, $N_1,N_2\in\ct_i$, and
$I_1$ is preinjective.
\end{itemize}
\end{lemma}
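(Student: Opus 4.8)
The statement to prove is Lemma~\ref{l:2.1.1}, a collection of well-known structural facts about the module category of a tame (Euclidean) path algebra. I would treat the four parts separately, since each draws on a different piece of the standard theory of tame hereditary algebras.

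\textbf{Parts (a)--(c).} These are classical and I would reduce them to the description of $\mod\Lambda$ via the Auslander--Reiten quiver. For (a), the key input is that the regular modules form an abelian exact subcategory which decomposes as a coproduct of tubes parametrized by $\mathbb{P}^1_k$, almost all homogeneous; the non-homogeneous tubes correspond to the finitely many points where the ``weight'' exceeds $1$, and the relation $\sum_i (r_i-1) = |I|-2$ together with $l \le 3$ is exactly the numerology of the extended Dynkin diagrams (the tubular types $(p,q)$, $(2,2,n)$, $(2,3,3)$, $(2,3,4)$, $(2,3,5)$ for $\tilde A, \tilde D, \tilde E_6, \tilde E_7, \tilde E_8$). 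I would either cite Ringel's book or Dlab--Ringel for this. For (b), I would use that the three subcategories $\cp_{prep}$, $\mathcal{R}$, $\cp_{prei}$ are totally ordered by $\Hom$ and $\Ext$ vanishing in one direction: this follows from the defect function $\partial$, with $\partial < 0$, $=0$, $>0$ on the three classes respectively, and the fact that a nonzero homomorphism cannot decrease the defect in the wrong direction (again standard, via $\Hom$ and $\Ext^1$ being connected by the Euler form and the Coxeter transformation). Part (c) is the statement that distinct tubes are $\Hom$- and $\Ext$-orthogonal, which is immediate from the orthogonality of the corresponding points of $\mathbb{P}^1_k$ and the Auslander--Reiten formula $\Ext^1(R,R') \cong D\Hom(R', \tau R')$ combined with $\tau$ acting within each tube.

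\textbf{Part (d).} This is the only part requiring a genuine argument rather than a citation. Let $M \in \ct_i$ and consider a short exact sequence $0 \to M_2 \to M \to M_1 \to 0$. Decompose $M_1 = M_1^{prep} \oplus M_1^{reg} \oplus M_1^{prei}$ and similarly $M_2$. First, $\Hom(\text{preproj}, M) $ applied to the surjection $M \twoheadrightarrow M_1$: since $M$ is regular, by (b) there is no nonzero map from $M$ onto a preprojective summand, hence $M_1$ has no preprojective part, i.e. $M_1^{prep} = 0$. Dually, using the injection $M_2 \hookrightarrow M$ and (b) (no nonzero map from a preinjective into a regular), $M_2^{prei} = 0$. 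So $M_1 = M_1^{reg} \oplus M_1^{prei}$ and $M_2 = M_2^{prep} \oplus M_2^{reg}$. It remains to see that the regular parts lie in $\ct_i$. For $M_1^{reg}$: it is a regular quotient of the regular module $M$; applying (c), any indecomposable regular summand of $M_1$ in a tube $\ct_j$ with $j \ne i$ would receive a nonzero map from $M \in \ct_i$, contradicting $\Hom(\ct_i, \ct_j) = 0$; homogeneous tubes are handled the same way since $\Hom$ between different points of $\mathbb{P}^1$ also vanishes. Hence $M_1^{reg} \in \ct_i$. Dually, $M_2^{reg}$ is a regular submodule of $M$, so by the $\Ext$/$\Hom$-orthogonality it sits inside $\ct_i$ as well. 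Relabeling $I_1 := M_1^{prei}$, $N_1 := M_1^{reg}$, $P_2 := M_2^{prep}$, $N_2 := M_2^{reg}$ gives the claim.

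\textbf{Main obstacle.} The only subtle point is the bookkeeping in part (d) to rule out regular summands in the \emph{wrong} tubes, including the homogeneous ones: one must be slightly careful that ``regular quotient of a regular module'' really forces the summand into the same tube, which ultimately rests on the fact that the category of regular modules is the coproduct (not just an extension-closed union) of the tube subcategories, so that $\Hom$ and $\Ext^1$ vanish between distinct tubes in both directions. Granting the structural results (a)--(c), which I would cite from Ringel's \emph{Tame Algebras and Integral Quadratic Forms} or from Dlab--Ringel, part (d) is then a short diagram-and-defect argument with no real computation.
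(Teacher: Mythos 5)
The paper states Lemma~\ref{l:2.1.1} without proof, introducing it simply as a list of ``well-known results'' and leaving the reader to consult the standard references (Dlab--Ringel, Ringel, Crawley-Boevey); so there is no proof in the paper to compare against. Your proposal is correct: citing the literature for the numerology in (a), the defect/trisection argument in (b), and the tube-orthogonality in (c) matches exactly what the paper tacitly assumes, and your argument for (d) -- peel off the preprojective part of $M_1$ and the preinjective part of $M_2$ using the $\Hom$-vanishing of (b) applied to the surjection $M\twoheadrightarrow M_1$ and the injection $M_2\hookrightarrow M$, then kill regular summands in foreign tubes (homogeneous or not) using (c) and the fact that the regular category is a coproduct of tubes -- is the standard and complete argument. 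One cosmetic slip: in your aside justifying (c) via the Auslander--Reiten formula you wrote $\Ext^1(R,R')\cong D\Hom(R',\tau R')$; for a hereditary algebra this should read $\Ext^1(R,R')\cong D\Hom(R',\tau R)$, and the conclusion then follows since $\tau R$ stays in the same tube as $R$. This does not affect the rest of the proof.
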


Each tube is an abelian subcategory of $\mod\llz$ and a simple object in a tube is called a \emph{regular simple module} in $\mod\llz$. For a tube of period $r$, there are precisely $r$ simple objects (up to isomorphism), and the sum of dimension vectors of these regular simples is independent of the tube, and this sum will be denoted by $\delta$. Recall that  in Section~\ref{ss:1.1} we have defined the symmetric Euler form $(-,-)$ on $G(\llz)=\bbz I$. In our case, this form is positive semi-definite and its radical is free of rank $1$ generated by $\delta$.  We collect information on the invariants $l$, $r_1,\ldots,r_l$ and $\delta$ of connected tame quivers in the following table.

\[\begin{tabular}{|c|c|c|c|}\hline
type & ~~$\ell$~~ & ~~$r_1,\ldots,r_{\ell}$~~ & $\delta$\\ \hline
\raisebox{-15pt}[0pt]{$\tilde{A}_{n-1,1}$} & \raisebox{-15pt}[0pt]{1} & \raisebox{-15pt}[0pt]{$n-1$} & \xymatrix@!=0.2pc{&&1&&\\ 1 \ar@{-}[urr] \ar@{-}[r]&1\ar@{.}[rr]&&1\ar@{-}[r] &1\ar@{-}[ull] }\\ \hline
\raisebox{-15pt}[0pt]{$\tilde{A}_{p,q}(p,q\geq 2)$} & \raisebox{-15pt}[0pt]{2} & \raisebox{-15pt}[0pt]{$p$, $q$} & \xymatrix@!=0.2pc{&&1&&\\ 1 \ar@{-}[urr] \ar@{-}[r]&1\ar@{.}[rr]&&1\ar@{-}[r] &1\ar@{-}[ull] }\\ \hline
\raisebox{-30pt}[0pt]{$\tilde{D}_n (n\geq 3)$} & \raisebox{-30pt}[0pt]{3} & \raisebox{-30pt}[0pt]{$2,2,n-2$} & \xymatrix@!=0.2pc{1 &&&&&1\\ &2\ar@{-}[ul]\ar@{-}[dl]\ar@{-}[r]&2\ar@{.}[r]&2\ar@{-}[r]&2\ar@{-}[ur]\ar@{-}[dr] &\\ 1 &&&&&1} \\ \hline
\raisebox{-30pt}[0pt]{$\tilde{E}_6$} & \raisebox{-30pt}[0pt]{3} & \raisebox{-30pt}[0pt]{2,3,3} & \xymatrix@!=0.2pc{&&1&&\\ &&2\ar@{-}[u]\ar@{-}[d]&&\\ 1\ar@{-}[r]&2\ar@{-}[r]&3\ar@{-}[r]&2\ar@{-}[r]&1}\\ \hline
\raisebox{-15pt}[0pt]{$\tilde{E}_7$} & \raisebox{-15pt}[0pt]{3} & \raisebox{-15pt}[0pt]{2,3,4} & \xymatrix@!=0.2pc{&&&2\ar@{-}[d]&&&\\ 1\ar@{-}[r]&2\ar@{-}[r]&3\ar@{-}[r]&4\ar@{-}[r]&3\ar@{-}[r]&2\ar@{-}[r]&1}\\ \hline
\raisebox{-15pt}[0pt]{$\tilde{E}_8$} & \raisebox{-15pt}[0pt]{3} & \raisebox{-15pt}[0pt]{2,3,5} & \xymatrix@!=0.2pc{&&3\ar@{-}[d]&&&&&\\ 2\ar@{-}[r]&4\ar@{-}[r]&6\ar@{-}[r]&5\ar@{-}[r]& 4\ar@{-}[r]& 3\ar@{-}[r]& 2\ar@{-}[r]&1}\\ \hline
\end{tabular}\]
\bigskip

Let $K$ be the path algebra over $\bbf_q$ of the Kronecker quiver
$\xymatrix{\cdot\ar@<.5ex>[r]\ar@<-.5ex>[r] & \cdot}$. Then there is
an embedding $\mod K\hookrightarrow\mod\llz$. Precisely, let $e$ be
an extending vertex of $Q$ (i.e. the $e$-th entry of $\delta$ equals
$1$), let $P=P(e)$ be the indecomposable projective $\llz$-module
corresponding to $e$, and let $L$ be the unique indecomposable
preprojective $\llz$-module with dimension vector $\delta+\udim P$.
Let $\fk{C}(P,L)$ be the smallest full subcategory of $\mod\llz$
which contains $P$ and $L$ and is closed under taking extensions,
kernels of epimorphisms, and cokernels of monomorphisms in the
category of $\llz$-modules. Then $\fk{C}(P,L)$ is equivalent to
$\mod K$ (see for example~\cite{LXZ} Section 6.1). This embedding is
essentially independent of the field $\bbf_q$.


\section{Rational Ringel--Hall
algebras}\label{s:singular-hall-alg}

Let $Q$ be a tame quiver with vertex set $I$, and $\Lambda=\bbf_q Q$ be the path algebra of
$Q$ over the finite field $\bbf_q$. In this section we will define the rational Ringel--Hall
algebras of $\Lambda$ and give a description in terms of a set of generators and relations.

\subsection{The rational Ringel--Hall algebra $\ch^{r,x_1,x_2,\cdots,x_t}(\llz)$}
Let $\ct_1,\ldots,\ct_l$ be the non-homogeneous tubes of $\mod\llz$ and assume that they are
respectively of period $r_1,\ldots,r_l$ (see Section~\ref{ss:tame-quiver}), and let $\mathscr{H}_{x_i}$ be the homogeneous tube corresponding to $x_i$, where $x_i$ are $\bbf_q$ rational
points in $ \mathbb{P}^1.$

Let $\ch^*(\llz)$ be the twisted Ringel--Hall algebra of $\Lambda$,
which has a basis $\{u_{[M]}|M\in \mod\Lambda\}$ with structure
constants given by Hall numbers, see Section~\ref{ss:hall-alg}. The
\emph{rational Ringel--Hall algebra} of $\llz$, denoted by
$\ch^{r,x_1,x_2,\cdots,x_t}(\llz)$, is defined as the subalgebra of $\ch^*(\llz)$
generated by $\{u_i,u_{[M]},
u_{[N]}: i\in I, M\in\ct_j, N\in\oplus_{i=1}^t\mathscr{H}_{x_i}
,1\leq j\leq l, ,t\in\bbn\}$, where $x_i$ are $\bbf_q$ rational
points in $ \mathbb{P}^1.$ Later we will prove the existence of Hall
polynomials, so that we have a generic version of the rational
Ringel--Hall algebra $\ch^{r,x_1,x_2,\cdots,x_t}(\llz)$. We now set $\mathscr{D}^{r,x_1,x_2,\cdots,x_t}(\llz)$
to be the subalgebra of $\mathscr{D}(\llz)$ generated by
$\{u_i^{\pm},u_{[M]}^{\pm}, u_{[N]}^{\pm}, K_\mu:i\in I, M \in \ct_j, N\in\oplus_{i=1}^t\mathscr{H}_{x_i}, 1\leqslant
j\leqslant l, \mu\in\bbn[I], \{x_1,x_2,\cdots,x_t\}\subseteq\mathbb{P}^1(\bbf_q) \}.$ Namely, it is the reduced Drinfeld Double of
$\ch^{r,x_1,x_2,\cdots,x_t}(\llz)$.

The
\emph{rational Ringel--Hall algebra} of $\llz$, denoted by
$\ch^r(\llz)$, is defined as the subalgebra of $\ch^*(\llz)$
generated by $\{u_i,u_{[M]}:i\in I, M \in \ct_j, 1\leqslant
j\leqslant l\}$. In the following, we only consider the rational Ringel-Hall algebra $\ch^r(\llz)$. All statements hold for the other rational Ringel-Hall algebras.

\begin{lemma}\label{l:2.1.2}
The $\mathscr{D}^r(\llz)$ is a Hopf algebra over $\bbq(v)$.
\end{lemma}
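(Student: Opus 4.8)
The plan is to show that $\mathscr{D}^r(\llz)$ is closed under the Hopf algebra operations of $\mathscr{D}(\llz)$ — comultiplication $\bigtriangleup$, counit $\epsilon$, and antipode $S$ — since it is by definition a subalgebra, and since the triangular decomposition $\mathscr{D}(\llz)=\ch^-\otimes\ct\otimes\ch^+$ will restrict to a corresponding decomposition of $\mathscr{D}^r(\llz)$. Concretely, $\mathscr{D}^r(\llz)$ is generated by $\{u_i^\pm, u_{[M]}^\pm, K_\mu : i\in I,\ M\in\ct_j,\ 1\le j\le l,\ \mu\in\bbz[I]\}$, so it suffices to check that $\bigtriangleup$, $\epsilon$, and $S$ applied to each of these generators lands in $\mathscr{D}^r(\llz)\otimes\mathscr{D}^r(\llz)$ (resp.\ $\bbq(v)$, resp.\ $\mathscr{D}^r(\llz)$); the general case then follows because these maps are (anti)algebra homomorphisms. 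The torus generators $K_\mu$ are immediate from the formulas $\bigtriangleup(K_\mu)=K_\mu\otimes K_\mu$, $\epsilon(K_\mu)=1$, $S(K_\mu)=K_{-\mu}$, and the generators $u_i^\pm$ are already handled inside the composition algebra $\cc(\llz)\subseteq\mathscr{D}^r(\llz)$, which is a Hopf subalgebra by the discussion in Section~\ref{ss:double-hall-alg}.

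The substance of the argument is therefore the behaviour of $\bigtriangleup$, $\epsilon$, $S$ on $u_{[M]}^+$ for $M\in\ct_i$ (and dually for $u_{[M]}^-$, using the involution $\omega$, which clearly preserves $\mathscr{D}^r(\llz)$). For the comultiplication, the formula gives
$$\bigtriangleup(u_{[M]}^+)=\sum_{\az,\bz\in\cp}v^{\lr{\az,\bz}}\frac{a_\az a_\bz}{a_{[M]}}g^{[M]}_{\az\bz}\,u_\az^+K_\bz\otimes u_\bz^+,$$
and the point is that $g^{[M]}_{\az\bz}\neq 0$ forces a short exact sequence $0\to B\to M\to A\to 0$ with $[A]=\az$, $[B]=\bz$. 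Here I invoke Lemma~\ref{l:2.1.1}(d): since $M\in\ct_i$, any such extension has $A\cong I_1\oplus N_1$ with $N_1\in\ct_i$ and $I_1$ preinjective, and $B\cong P_2\oplus N_2$ with $N_2\in\ct_i$ and $P_2$ preprojective. However, $\Hom$ and $\Ext$ vanishing (Lemma~\ref{l:2.1.1}(b)) together with the fact that $M$ is regular forces the preprojective and preinjective parts to vanish: a nonzero preinjective quotient of a regular module is impossible since $\Hom(\text{regular},\text{preinjective})$ summands arise only when... — more precisely, because $M\in\ct_i$ is regular, every subobject and quotient in an exact sequence with middle term $M$ is again regular and lies in $\ct_i$ (this is the statement that $\ct_i$ is an abelian subcategory closed under extensions, sub, and quotient, combined with (b)). Hence $A,B\in\ct_i$, so every term $u_\az^+K_\bz\otimes u_\bz^+$ on the right lies in $\mathscr{D}^r(\llz)\otimes\mathscr{D}^r(\llz)$. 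The counit is trivial: $\epsilon(u_{[M]}^+)=0$ for $M\neq 0$.

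For the antipode the same mechanism applies but requires slightly more care: the formula for $S(u_{[M]}^+)$ involves the coefficients $g^{[M]}_{\lz_1\cdots\lz_m}$, which are nonzero only when $M$ has a filtration with successive quotients of classes $\lz_1,\ldots,\lz_m$; iterating the argument of the previous paragraph (each $\lz_j$ is a subquotient of a regular module in $\ct_i$, hence again in $\ct_i$), all the $K_{-[M]}u_\pi^+$ appearing have $\pi\in\ct_i$-filtered, so $u_\pi^+\in\mathscr{D}^r(\llz)$ and $K_{-[M]}\in\ct\subseteq\mathscr{D}^r(\llz)$. Thus $S$ preserves $\mathscr{D}^r(\llz)$. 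Finally, I would note that the bilinear form $\varphi$ restricts to $\mathscr{D}^r(\llz)$ (the defining formula only involves $\az,\bz\in\cp$ and $\mu,\nu\in\bbz[I]$, and is diagonal in $\az,\bz$), so the reduced Drinfeld double structure is inherited consistently, and the compatibility axioms (coassociativity, the bialgebra axiom, the antipode axiom) hold in $\mathscr{D}^r(\llz)$ simply because they hold in $\mathscr{D}(\llz)$ and $\mathscr{D}^r(\llz)$ is a sub-bialgebra closed under $S$. The main obstacle is the bookkeeping in the filtration argument for $S$ — ensuring that the closure of $\ct_i$ under subquotients is applied correctly at every stage of the iterated coproduct — but this is a direct consequence of Lemma~\ref{l:2.1.1} and the fact that each non-homogeneous tube is an abelian subcategory, so no genuinely new input is needed.
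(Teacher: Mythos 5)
Your overall strategy (check the Hopf operations on generators, handle $u_i^\pm$ and $K_\mu$ trivially, use $\omega$ for the minus side) agrees with the paper, but there is a genuine error at the heart of the argument for both $\bigtriangleup$ and $S$: you claim that any subobject or quotient in a short exact sequence with middle term $M\in\ct_i$ lies again in $\ct_i$. This is false, and in fact contradicts the very lemma you cite. Lemma~\ref{l:2.1.1}(d) says precisely that for $0\to M_2\to M\to M_1\to 0$ with $M\in\ct_i$, the end terms decompose as $M_1\cong I_1\oplus N_1$ and $M_2\cong P_2\oplus N_2$ with $I_1$ preinjective and $P_2$ preprojective \emph{possibly nonzero}; the non-homogeneous tube $\ct_i$ is an abelian subcategory of $\mod\llz$, but it is not closed under taking subobjects or quotients formed in the ambient category (as opposed to within the tube itself). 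Your "more precisely" clause assumes exactly the opposite, and the Hom/Ext vanishings of Lemma~\ref{l:2.1.1}(b) do not kill $\Hom(\text{regular},\text{preinjective})$ or $\Hom(\text{preprojective},\text{regular})$, so a regular module can have preinjective quotients and preprojective submodules. A concrete instance: for $\tilde A_{n,1}$ the module $E_1\oplus\cdots\oplus E_n\in\ct_1$ degenerates to the semisimple $S_1\oplus\cdots\oplus S_{n+1}$, where $S_1$ is projective and $S_{n+1}$ is injective, so preprojective and preinjective pieces genuinely appear among subquotients.

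Once the incorrect vanishing of $I_1$, $P_2$ is removed, your argument has a hole: you still need to know that $u^+_{[M_1]}=u^+_{[I_1\oplus N_1]}$ and $u^+_{[M_2]}=u^+_{[P_2\oplus N_2]}$ lie in $\mathscr{D}^r(\llz)$, which is not immediate from the listed generators. The paper closes this by observing (via $\Ext(N_1,I_1)=0$ and $\Hom(I_1,N_1)=0$) that $u^+_{[M_1]}$ is a scalar multiple of $u^+_{[N_1]}*u^+_{[I_1]}$, where $u^+_{[N_1]}$ is a generator and $u^+_{[I_1]}$ lies in $\mathcal{C}^+(\llz)\subseteq\mathscr{D}^r(\llz)$ (this is the tame-quiver fact, from~\cite{LXZ}, that preprojective and preinjective Hall elements belong to the composition algebra), and similarly for $M_2$. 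For the antipode the paper does not use the explicit closed formula for $S$ with iterated filtrations as you do; instead it uses the Hopf axiom $\mu(S\otimes 1)\bigtriangleup=\eta\epsilon$ to write $S(u^+_{[M]})$ recursively and then inducts on $\udim M$, invoking~\cite[Lemmas~6.1,~6.2]{LXZ} for $S(u^+_{[I_1]})$. Your filtration version of the antipode argument inherits the same gap (successive subquotients leave $\ct_i$), and in addition you would need control over the modules $\pi$ appearing in the $g^\pi_{\lz_1\cdots\lz_m}$ terms, which your argument does not provide. In short, the conclusion is true but the proof as written rests on a false claim about tubes; repairing it forces you into essentially the paper's decomposition $I_1\oplus N_1$, $P_2\oplus N_2$ and the external input that preprojective/preinjective Hall elements lie in the composition algebra.
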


\begin{proof} We prove that $\mathscr{D}^r(\llz)$ is a Hopf subalgebra of $\mathscr{D}(\llz)$, i.e.
$\mathscr{D}^r(\llz)$ is closed under comultiplication and is closed
under antipode $S$. The former statement follows easily from
Lemma~\ref{l:2.1.1}. For the latter, it is sufficient to check on the
generators of $\mathscr{D}^r(\llz)$, since $S$ is an algebra
homomorphism. That $S(u_i^{\pm}),i\in I$, and
$S(K_\mu),\mu\in\mathbb{N}[I]$ belongs to $\mathscr{D}^r(\llz)$
follows immediately from the definition of $S$. Let $M\in \ct_j,
1\leq j\leq l$. We will prove that
$S(u^+_{[M]})\in\mathscr{D}^r(\llz),$ for $M\in\ct_i, 1\leqslant
i\leqslant l$ by induction on $\udim M$. The proof for $u^-$ is the
same.

Applying the equality $\mu(S \otimes
1)\bigtriangleup=\eta\epsilon$ to $u^+_{[M]}$, we obtain
\begin{eqnarray}\label{f:2.1.1}
S(u^+_{[M]})&=&-S(K_{\udim M})u^+_{[M]}\\
&&\mbox{}-\sum_{M_1,M_2\neq 0}v^{\lr{\udim M_1,\udim M_2}}
\frac{a_{M_1}a_{M_1}}{a_{M}}g^M_{M_1M_2}S(u^+_{[M_1]}K_{\udim
M_2})u^+_{[M_2]}.\nonumber\end{eqnarray} Assume $M_1\cong I_1\oplus
N_1,M_2\cong P_2\oplus N_2$, as in Lemma~\ref{l:2.1.1}. Note that
$\mathrm{Ext}^1(N_1,I_1)=0$. Therefore, we have
$u^+_{[N_1]}u^+_{[I_1]}=v^{\lr{\udim N_1 ,\udim I_1}}u^+_{[M_1]}$.
By induction hypothesis, we have
$S(u^+_{[N_1]})\in\mathscr{D}^r(\llz)$.  By Lemma~6.1 and 6.2 in
\cite{LXZ}, we have  $S(u^+_{[I_1]})\in\mathscr{D}^r(\llz)$. So
$S(u^+_{[M_1]})\in\mathscr{D}^r(\llz)$. Similarly,
$S(u^+_{[M_2]})\in\mathscr{D}^r(\llz)$. Therefore $(\ref{f:2.1.1})$
implies that $S(u^+_{[M]})\in\mathscr{D}^r(\llz)$.
\end{proof}

\subsection{Decomposition of $\ch^{r}(\llz)$}\label{ss:decomposition} In this subsection, we
follow an idea of Sevenhant and Van den Bergh to obtain subalgebras
of $\ch^r(\llz)$ and $\mathscr{D}^r(\llz).$ (See also \cite{HX}.)

The twisted Ringel--Hall algebra $\ch^*(\Lambda)$ is naturally
$\mathbb{N}[I]$-graded, and so are its subalgebras $\ch^r(\llz)$ and
$\cc(\llz)$. We define a partial order on $\mathbb{N}[I]$: for
$\alpha,\beta\in\mathbb{N}[I]$, $\alpha\leq\beta$ if and only if
$\beta-\alpha\in\mathbb{N}[I]$. Clearly,
$\mathcal{C}(\llz)_\bz=\ch^r(\llz)_\bz$ if $\bz<\dz$.

Recall from Section~\ref{ss:double-hall-alg} that
$\varphi:\ch^+(\llz)\times\ch^-(\llz){\lra}\bbq(v)$ is a
non-degenerate bilinear form. It is easy to see that the restriction
of $\varphi$ on $\ch^{r,+}_\az\times\ch^{r,-}_\az$ is also
non-degenerate for all $\az\in \bbn[I]$. We now define
 $$\mathcal {L}^\pm_\dz=\{x^\pm\in\ch^{r,\pm}_{\dz}(\llz)|~\varphi(x^\pm,\mathcal {C}^{\mp}(\llz))=0\}.$$
 The non-degeneracy of $\varphi$ implies
\[\ch^{r,\pm}(\llz)_\dz=\mathcal {C}^{\pm}(\llz)_\dz\oplus\mathcal
{L}^\pm_\dz.\]
Let $\mathscr{D}^r(1)$ be the subalgebra of $\mathscr{D}^r$
generated by $\mathcal {C}^{\pm}(\llz)$ and $\mathcal{L}^{\pm}_\dz.$
Then we have a triangular decomposition
$$\mathscr{D}^r(1)=\mathscr{D}^r(1)^-\otimes\mathcal
{T}\otimes\mathscr{D}^r(1)^+.$$

Suppose $\mathcal {L}^{\pm}_{(m-1)\dz}$ and
$\mathscr{D}^r(m-1)^{\pm}$ have been defined, we inductively define
$\mathcal {L}^{\pm}_{m\dz}$ as follows:
\[
\mathcal{L}^{\pm}_{m\dz}=\{x^{+}\in\ch^{r,\pm}_{m\dz}|~\varphi(x^{\pm},\mathscr{D}^r(m-1)^\mp)=0\}.
\]
Let $\mathscr{D}^r(m)$ be the subalgebra of $\mathscr{D}^r$
generated by $\mathscr{D}^r(m-1)^{\pm}$ and $\mathcal{L}^{\pm}_{m\dz}.$ As
in the $m=1$ case, we have a triangular decomposition
$$\mathscr{D}^r(m)=\mathscr{D}^r(m)^-\otimes\mathcal
{T}\otimes\mathscr{D}^r(m)^+.$$
In this way we obtain a chain of subalgebras of $\mathscr{D}^r$
\[\cc(\llz)\subset \mathscr{D}^r(1)\subset\mathscr{D}^r(2)\subset\ldots\subset\mathscr{D}^r(m)\subset\ldots\subset\mathscr{D}^r\]
such that $\mathscr{D}^r=\bigcup_{m\in\bbn}\mathscr{D}^r(m)$.


\begin{lemma}\label{l:2.2.1}
Let $\eta_{n\dz}=\dim_{\bbq(v)}\mathcal
{L}^+_{n\dz}=\dim_{\bbq(v)}\mathcal {L}^-_{n\dz}$.  Then
$\eta_{n\dz}=l$.
\end{lemma}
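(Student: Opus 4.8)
The plan is to pin down $\eta_{n\dz}$ by computing the Poincar\'e series of $\ch^{r,+}(\llz)$ in the $\dz$-direction in two ways and comparing. First I would treat the ``algebraic'' side, coming from the chain $\cc(\llz)\subset\mathscr{D}^r(1)\subset\cdots$. By the very definition of $\mathcal{L}^+_{n\dz}$ and the non-degeneracy of $\varphi$ on $\ch^{r,+}_{n\dz}\times\ch^{r,-}_{n\dz}$ one has $\ch^{r,+}_{n\dz}=\mathscr{D}^r(n-1)^+_{n\dz}\oplus\mathcal{L}^+_{n\dz}$. Iterating the triangular decompositions $\mathscr{D}^r(m)=\mathscr{D}^r(m)^-\otimes\ct\otimes\mathscr{D}^r(m)^+$, and using that $m\dz$ lies in the radical of $(-,-)$ — so that, exactly as in Sevenhant--Van den Bergh, each $\mathscr{D}^r(m)^+$ is free as a $\mathscr{D}^r(m-1)^+$-module on the symmetric algebra $\operatorname{Sym}(\mathcal{L}^+_{m\dz})$ — one gets a graded vector-space isomorphism $\ch^{r,+}(\llz)=\bigcup_m\mathscr{D}^r(m)^+\cong\cc^+(\llz)\otimes\operatorname{Sym}\bigl(\bigoplus_{m\ge1}\mathcal{L}^+_{m\dz}\bigr)$. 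Since $\mathcal{L}^+_{m\dz}$ is homogeneous of degree $m\dz$ and has dimension $\eta_{m\dz}$, passing to the $\bbz\dz$-graded part yields
\[
\sum_{n\ge0}\dim_{\bbq(v)}\ch^{r,+}_{n\dz}\;t^n\;=\;\Bigl(\sum_{n\ge0}\dim_{\bbq(v)}\cc^+_{n\dz}\;t^n\Bigr)\cdot\prod_{m\ge1}(1-t^m)^{-\eta_{m\dz}}.
\]

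Next I would compute the same series from the module category of $\llz$. By Lemma~\ref{l:2.1.1}(b),(c) the non-homogeneous tubes $\ct_1,\dots,\ct_l$ are pairwise $\Hom$- and $\Ext^1$-orthogonal and are orthogonal to the preprojective and preinjective components in the directions relevant to the Hall product, so a Ringel--Riedtmann-type PBW basis of $\ch^{r,+}(\llz)$ splits off a ``regular non-homogeneous'' factor $\bigotimes_{j=1}^{l}\ch^+(\ct_j)$ — the $\bigotimes_j$ of the twisted Hall algebras of the tubes — from a factor built out of preprojectives, preinjectives and the composition generators $u_i$, which is the same as the corresponding factor of $\cc^+(\llz)$. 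Each $\ct_j$ is equivalent to the category of nilpotent representations of the cyclic quiver $C_{r_j}$, and the structure theorem for the Hall algebra of a cyclic quiver gives $\ch^+(\ct_j)\cong\cc^+(\ct_j)\otimes Z_j$, where $\cc^+(\ct_j)\cong\mathbf{U}^+_v(\widehat{\mathfrak{sl}}_{r_j})$ is the subalgebra generated by the regular simples of $\ct_j$ and $Z_j\cong\bbq(v)[z^{(j)}_1,z^{(j)}_2,\dots]$ is a polynomial algebra with $z^{(j)}_n$ homogeneous of degree $n\dz$. Moreover the $r_j$ regular simples of $\ct_j$ all have dimension vector $<\dz$, so by the observation recorded in \S\ref{ss:decomposition} ($\cc(\llz)_\bz=\ch^r(\llz)_\bz$ for $\bz<\dz$) the subalgebra $\cc^+(\ct_j)$ is already contained in $\cc^+(\llz)$. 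Hence the only factor of $\ch^{r,+}(\llz)$ that is genuinely new relative to $\cc^+(\llz)$ is $\bigotimes_{j=1}^lZ_j$, and comparing Poincar\'e series in the $\dz$-direction,
\[
\sum_{n\ge0}\dim_{\bbq(v)}\ch^{r,+}_{n\dz}\;t^n\;=\;\Bigl(\sum_{n\ge0}\dim_{\bbq(v)}\cc^+_{n\dz}\;t^n\Bigr)\cdot\prod_{j=1}^{l}\prod_{k\ge1}(1-t^k)^{-1}\;=\;\Bigl(\sum_{n\ge0}\dim_{\bbq(v)}\cc^+_{n\dz}\;t^n\Bigr)\cdot\prod_{k\ge1}(1-t^k)^{-l}.
\]

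Finally, the series $\sum_n\dim_{\bbq(v)}\cc^+_{n\dz}t^n$ has constant term $1$, hence is invertible in $\bbq(v)[[t]]$; cancelling it from the two displayed identities gives $\prod_{m\ge1}(1-t^m)^{-\eta_{m\dz}}=\prod_{m\ge1}(1-t^m)^{-l}$, and taking logarithmic derivatives (equivalently, M\"obius inversion of $\sum_{k\mid n}k(\eta_{k\dz}-l)=0$) forces $\eta_{m\dz}=l$ for all $m\ge1$; the equality $\dim_{\bbq(v)}\mathcal{L}^-_{n\dz}=\dim_{\bbq(v)}\mathcal{L}^+_{n\dz}$ then follows from the involution $\omega$, which exchanges $\ch^{r,\pm}$ and is compatible with the whole chain $\cc(\llz)\subset\mathscr{D}^r(1)\subset\cdots$. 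I expect the main obstacle to be Step 2: one has to make rigorous that the regular non-homogeneous part of $\ch^{r,+}(\llz)$ is \emph{exactly} $\bigotimes_j\ch^+(\ct_j)$ and that it enters the Poincar\'e series multiplicatively, with no overlap with $\cc^+(\llz)$ beyond $\bigotimes_j\cc^+(\ct_j)$ — this is precisely the point where one needs the explicit Ringel--Riedtmann PBW basis built on Lemma~\ref{l:2.1.1} together with the known structure of the Hall algebra of a cyclic quiver; the ``algebraic'' Step 1 is the more formal Sevenhant--Van den Bergh machinery that the chain construction already provides.
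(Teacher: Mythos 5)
Your proposal reaches the right conclusion, but it travels a genuinely different road from the paper's. The paper's proof is a direct specialization at $v=1$: it quotes Proposition~6.5 of~\cite{LXZ}, which gives an explicit generating set of $\mathcal{C}^+(\llz)/(v-1)$ as the positive part of the affine Kac--Moody Lie algebra (preprojective roots, real roots in tubes, the $r_i-1$ imaginary pieces $\Psi(u_{j,m\dz,i}-u_{j+1,m\dz,i})$ per tube, the single $\Psi(\tilde E_{n\dz})$, and preinjective roots), then observes that the corresponding generating set for $\ch^r(\llz)/(v-1)$ simply replaces the $r_i-1$ differences by all $r_i$ of the $\Psi(u_{j,m\dz,i})$, so the surplus in each imaginary degree $m\dz$ is exactly $\sum_i 1 = l$. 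No generating functions, no cyclic-quiver structure theorem; the answer is read off the two lists. Your argument, by contrast, stays over $\bbq(v)$, factors the Poincar\'e series in the $\dz$-direction twice --- once via the Sevenhant--Van den Bergh chain, once via the PBW basis combined with $\ch^+(\ct_j)\cong\cc^+(\ct_j)\otimes Z_j$ for the cyclic quiver --- and then extracts $\eta_{m\dz}=l$ by M\"obius inversion. What your route buys: it avoids quoting a $v=1$ Lie-theoretic basis and instead leans on machinery (Theorem~\ref{t:4.1.1}/LXZ Prop.~7.2 and the Schiffmann-type decomposition of the cyclic Hall algebra) that is used elsewhere in the paper anyway, and it is cleanly stated over $\bbq(v)$; what it costs is that you must verify the tensor factorization of the Poincar\'e series carefully (the factors $E_{{\bf w}\dz}$ and $E_{{\bf w}\dz,3}$ differ as elements but happily contribute identical $\dz$-graded factors), whereas the paper's count is essentially immediate once LXZ 6.5 is granted. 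Both are correct; you correctly identify Step 2 (the module-theoretic factorization) as the place that needs the most care.
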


\begin{proof} By proposition~6.5 in~\cite{LXZ},
we know that $\mathcal {C}^+(\llz)/(v-1)$ has a basis consiting of the following elements
\begin{itemize}
\item[a)] $\Psi(u_{[M(\az)]})$ for
 $\az\in \Phi^+_{Prep}$;
\item[b)] $\Psi(u_{\az,i})$ for $\az\in\ct_i, $ the  real
 roots,  $i=1,\ldots,l;$
\item[c)] $\Psi(u_{j,m\dz,i}-u_{j+1,m\dz,i}),$ $m\geq 1,$
 $1\leq j\leq r_i-1,$ $i=1,\ldots,l;$
\item[d)] $\Psi(\te_{n\dz}), n\geq 1$
\item[e)] $\Psi(u_{[M(\bz)]})$ for $\bz\in \Phi^+_{Prei}$.
\end{itemize}
While according to the definition $\ch^r$ the space  $\ch^r/(v-1)$ has a basis consiting of elements in
a) b) d) e) and all $\Psi(u_{j,m\dz,i}),$ $m\geq 1,$
 $1\leq j\leq r_i,$ $i=1,\ldots,l$. Thus we have $\eta_{n\dz}=l$ by the
 construction of $\mathcal
{L}^+_{n\dz}$ for all $n.$ 
\end{proof}

For each $n\dz,$ there exists a basis $\{x_n^1,\ldots,x_n^l\}$ of
$\mathcal {L}^+_{n\dz}$ and a basis $\{y_n^1,\ldots,y_n^l\}$ of
$\mathcal {L}^-_{n\dz}$ such that
$$\varphi(x_n^p,y_n^q)=\frac{1}{v-v^{-1}}\dz_{pq}.$$
Here $\dz_{pq}$ denotes the Kronecker's delta. Then we have
$$x_n^py_m^q-y_m^qx_n^p=\frac{K_{n\dz}-K_{-n\dz}}{v-v^{-1}}\dz_{pq}\dz_{mn},$$
for all $m,n\in\mathbb{N},1\leqslant p,q\leqslant l$
(see~\cite{HX}).

Let $J=\{(n\dz,p):n\in\mathbb{N},1\leqslant p\leqslant l\}.$ Define
\begin{eqnarray*}
\theta_i&=&\begin{cases} i & \text{ if } i\in I,\\ n\dz & \text{ if
} i=(n\dz,p)\in J,\end{cases}\\
x_i&=&\begin{cases} u_i^+ & \text{ if } i\in I,\\ x_n^p & \text{ if
}
i=(n\dz,p)\in J,\end{cases}\\
y_i&=&\begin{cases} -v^{-1}u_i^- & \text{ if } i\in I,\\-v^{-1}y_n^p
& \text{ if } i=(n\dz,p)\in J.\end{cases}\end{eqnarray*} By a
theorem of Sevenhant and Van den Bergh, we obtain that
$\mathscr{D}^r$ is generated by
$$\{x_i,y_i|~i\in I\cup J\}\cup\{K_\mu:\mu\in\bbn[I]\}$$
with the defining relations
\begin{eqnarray}
&K_0 =1,K_\mu K_\nu=K_{\mu+\nu} \text{ for all } \mu,\nu\in\bbn[I]&
\label{f:2.2.1}\\
&K_\mu x_i = v^{(\mu,\theta_i)}x_i K_\mu, \text{ and } K_\mu y_i =
v^{-(\mu,\theta_i)}y_i K_\mu \text{ for all } i\in I\cup J , \mu\in
\bbn[I]&\label{f:2.2.2}\\
&x_iy_j-y_jx_i=\frac{K_{\theta_i}-K_{-\theta_i}}{v-v^{-1}}\dz_{ij}
\text{ for all }i,j\in I\cup J& \label{f:2.2.3}\\
&\sum_{p+p'=1-a_{ij}}(-1)^px_i^{(p)}x_jx_i^{(p')}=0 \text{ and}
\sum_{p+p'=1-a_{ij}}(-1)^py_i^{(p)}y_jy_i^{(p')}=0& \label{f:2.2.4}\\
&x_ix_j =x_jx_i \text{ and } y_iy_j=y_jy_i \text{  for all } i,j\in
I\cup J \text{ with } (\theta_i,\theta_i)=0.& \label{f:2.2.5}
\end{eqnarray}
Here for an element $x$ and a positive integer $p$ the symbol $x^{(p)}$ denotes the divided power $\frac{x^p}{[p]!}$, where  $[p]=\frac{v^p-v^{-p}}{v-v^{-1}}$ and $[p]!=[1][2]\cdots[p]$.


Applying the relations above, the next statement is clear.

\begin{proposition}\label{p:2.2.2}
\begin{itemize}
\item[(a)] The elements $\{x_n^i|~n\in\bbn, 1\leqslant i\leqslant l\}$ are central in $\ch^{r,+}$.
Dually, the elments $\{y_n^i|~n\in\bbn, 1\leqslant i\leqslant l\}$ are central in $\ch^{r,-}$.
\item[(b)] We have an algebra isomorphism $\ch^{r,+}\cong \mathcal {C}^+\otimes \bbq(v)[x_n^i|~n\in\bbn, 1\leqslant i\leqslant l$.
\item[(c)] The element $x_n^j$ commutes with $\ch^{r,-}$, and the element $y_n^j$ commutes with $\ch^{r,+}$.
\end{itemize}
\end{proposition}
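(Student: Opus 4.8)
The plan is to deduce all three statements directly from the Sevenhant--Van den Bergh presentation of $\mathscr{D}^r$ given by relations (\ref{f:2.2.1})--(\ref{f:2.2.5}), together with the observation from Lemma~\ref{l:2.2.1} that each $\mathcal{L}^{\pm}_{n\dz}$ is $l$-dimensional with chosen bases $\{x_n^i\}$ and $\{y_n^i\}$. The starting point is that since $\delta$ lies in the radical of the symmetric Euler form, $(\theta_i,\theta_i)=(n\dz,n\dz)=0$ for every $i=(n\dz,p)\in J$, and moreover $(\theta_i,\theta_j)=(m\dz,n\dz)=0$ for any two indices $i,j\in J$, and $(\theta_i,\theta_j)=(i',n\dz)=0$ for $i'\in I$, $(n\dz,p)\in J$ as well, because $\dz$ is radical. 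So the imaginary generators are ``orthogonal'' to everything in the Cartan datum.

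First I would prove (a). By relation (\ref{f:2.2.5}), for $i,j\in I\cup J$ with $(\theta_i,\theta_i)=0$ we have $x_ix_j=x_jx_i$; applying this with $i=(n\dz,p)\in J$ (so $(\theta_i,\theta_i)=0$) and $j$ ranging over all of $I\cup J$ shows that each $x_n^p$ commutes with every algebra generator $x_j$ of $\ch^{r,+}=\mathscr{D}^r{}^+$, hence is central in $\ch^{r,+}$. The statement for the $y_n^i$ in $\ch^{r,-}$ is the mirror image, using the second half of (\ref{f:2.2.5}), or alternatively follows by applying the involution $\omega$. This is essentially immediate once the radical computation is in place.

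Next, for (c), I would invoke relation (\ref{f:2.2.3}): $x_iy_j-y_jx_i=\frac{K_{\theta_i}-K_{-\theta_i}}{v-v^{-1}}\dz_{ij}$. Taking $i=(n\dz,j)\in J$, the right-hand side vanishes unless the other index equals $(n\dz,j)$ as well; but within $\ch^{r,-}$ the relevant generators are the $y_{i'}^-$ for $i'\in I$ and the $y_m^q$ for $(m\dz,q)\in J$, and by the normalization $\varphi(x_n^p,y_m^q)=\frac{1}{v-v^{-1}}\dz_{pq}\dz_{mn}$ together with (\ref{f:2.2.3}) rewritten in the $x_n^p,y_m^q$ notation (the displayed commutator $x_n^p y_m^q-y_m^q x_n^p=\frac{K_{n\dz}-K_{-n\dz}}{v-v^{-1}}\dz_{pq}\dz_{mn}$ just before $J$ is introduced), the bracket $[x_n^j,y_m^q]$ is zero whenever $(m\dz,q)\neq(n\dz,j)$, and $[x_n^j,u_{i'}^-]=0$ for all $i'\in I$ since those sit in degree $<\dz$. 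Hence $x_n^j$ commutes with a generating set of $\ch^{r,-}$, so with all of $\ch^{r,-}$; the claim for $y_n^j$ and $\ch^{r,+}$ is again the mirror statement.

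Finally, for (b), I would use the triangular/graded structure. By part (a) the $x_n^i$ generate a commutative subalgebra of $\ch^{r,+}$, and since they are algebraically independent — this follows because $\mathcal{C}^+(\llz)_{m\dz}\oplus\bigoplus(\text{span of monomials in lower }x_n^i)$ exhausts $\ch^{r,+}_{m\dz}$ by the inductive construction of the $\mathcal{L}^+_{n\dz}$, and comparing graded dimensions via the PBW-type count in Lemma~\ref{l:2.2.1} (the basis description of $\ch^r/(v-1)$ there) shows no nontrivial polynomial relation among them can hold — the subalgebra they generate is a polynomial ring $\bbq(v)[x_n^i\mid n\in\bbn,1\leq i\leq l]$. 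Then the multiplication map $\mathcal{C}^+\otimes\bbq(v)[x_n^i]\to\ch^{r,+}$, $c\otimes f\mapsto cf$, is an algebra homomorphism because the $x_n^i$ are central (part (a)); it is surjective because $\mathcal{C}^+$ and the $x_n^i$ together generate $\ch^{r,+}$ by the Sevenhant--Van den Bergh theorem; and it is injective by a graded-dimension comparison in each degree $\gamma\in\bbn[I]$, using that $\dim\ch^{r,+}_{\gamma}=\sum_{m}\dim\mathcal{C}^+_{\gamma-m\dz}\cdot\#\{\text{monomials in the }x_n^i\text{ of degree }m\dz\}$, which is exactly what Lemma~\ref{l:2.2.1} and the decomposition of $\ch^{r}(\llz)$ in Section~\ref{ss:decomposition} provide. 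The main obstacle is making the algebraic independence of the $x_n^i$ and the injectivity of the multiplication map fully rigorous: one must be careful that the spaces $\mathcal{L}^+_{n\dz}$ really contribute $l$ independent new central elements at each level $n\dz$ with no collisions against products of lower-level ones, which is precisely the content of the graded-dimension bookkeeping set up in Lemma~\ref{l:2.2.1} and must be cited carefully rather than reproved.
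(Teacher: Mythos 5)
Your arguments for parts (a) and (b) are sound and flesh out what the paper leaves implicit (the paper's entire ``proof'' is the single sentence ``Applying the relations above, the next statement is clear''). The observation driving (a) — that $\delta$ lies in the radical of $(-,-)$, so $(\theta_i,\theta_i)=0$ and $(\theta_i,\theta_j)=0$ whenever $i\in J$ — is exactly the right thing, and then relation (\ref{f:2.2.5}) gives centrality directly. For (b), the multiplication map is an algebra map by (a), surjective by Sevenhant--Van den Bergh, and injective by the graded dimension count built into Section~\ref{ss:decomposition} and Lemma~\ref{l:2.2.1}. All fine.

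Your argument for (c), however, is internally inconsistent, and the issue is not cosmetic. You correctly read off from (\ref{f:2.2.3}) (equivalently, from the displayed identity $x_n^py_m^q-y_m^qx_n^p=\frac{K_{n\dz}-K_{-n\dz}}{v-v^{-1}}\dz_{pq}\dz_{mn}$) that the bracket $[x_n^j,y_m^q]$ vanishes \emph{except} when $(m,q)=(n,j)$. But $y_n^j$ is itself a generator of $\ch^{r,-}$, and the very relation you quote says $[x_n^j,y_n^j]=\frac{K_{n\dz}-K_{-n\dz}}{v-v^{-1}}\neq 0$. So the sentence ``Hence $x_n^j$ commutes with a generating set of $\ch^{r,-}$, so with all of $\ch^{r,-}$'' does not follow from what you just wrote; it contradicts it. As stated, neither your argument nor the literal reading of Proposition~\ref{p:2.2.2}(c) is compatible with relation (\ref{f:2.2.3}). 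The version of (c) that your computation actually proves — and that is what the subsequent Fock-space discussion needs — is that $x_n^j$ commutes with the composition subalgebra $\mathcal{C}^-(\llz)$ (generated by the $u_{i'}^-$, $i'\in I$), and dually $y_n^j$ commutes with $\mathcal{C}^+(\llz)$: for those generators the Kronecker delta in (\ref{f:2.2.3}) always vanishes because $I$ and $J$ are disjoint. You should either restate (c) in that form or explain why the full $\ch^{r,\mp}$ is intended (e.g.\ if the claim is meant modulo the torus, or in some quotient), but you cannot simply assert commutation with all of $\ch^{r,-}$ after exhibiting a nonvanishing bracket.

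One smaller point: the justification ``$[x_n^j,u_{i'}^-]=0$ for all $i'\in I$ since those sit in degree $<\dz$'' is not quite the right reason. The correct reason is again the Kronecker delta in (\ref{f:2.2.3}): for $i\in J$ and $i'\in I$ we have $\dz_{i,i'}=0$ since $I\cap J=\varnothing$. The degree argument alone would not rule out a nontrivial contribution from iterated brackets in $\ch^{r,-}_{n\dz}$.
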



In particular, when the quiver $Q$ is of type
$\widetilde{A}_{n-1,1}$ for some $n\geq 3$, we have $l=1$ and we
have an algebra isomorphism $\ch^{r,+}\cong
U_q(\widetilde{sl}_n)^+\otimes \bbq(v)[x_1,\ldots,x_n,\ldots]$. This
means that $\ch^{r,+}$ is isomorphic to the $q$-Fock space of type
$A$.  Also this proposition implies that the algebra structure
$\ch^r$ not only  depends on the type, but also on the orientation.




\section{The quantum extended Kac--Moody algebra}

\subsection{} Let $(I,(,))$ be a datum in the sense of Green [G,
3.1]. Let $J$ be an index set and let
$\theta:J\rightarrow\bbn[I]\backslash\{0\},j\mapsto \theta_j$ be a
map with finite fibers. We denote by $\widetilde{G}$ the triple $(I,
(,),\{\theta_j: j\in J\})$, and call it an \emph{extended Green
datum}. We extend $\theta$ to a map $\theta:I\cup J\rightarrow\bbn
[I]\backslash\{0\}$ by setting $\theta_i=i$ for $i\in I$.

Let $Q$ be a tame quiver with vertex set $I$, and let $\llz$ be the path algebra. Recall that a Hopf subalgebra $\mathscr{D}^r(\llz)$ of $\mathscr{D}(\llz)$ is defined in Section~\ref{s:singular-hall-alg}.
Let $\widetilde{G}$ be the extended Green datum
corresponding to $\mathscr{D}^r(\Lambda)$. Precisely, $J=\{(n\dz,p):1\leqslant p\leqslant l\}$ and
$\theta_{(n\delta,p)}=n\delta$. From $\widetilde{G}$ we define a new
datum $\widetilde{G}'=(I\cup J, (,)'),$ where
$(i,j)'=(\theta_i,\theta_j)$ for all $i,j\in I\cup J.$ Let
$\mathscr{D}'(\Lambda)$ be the reduced Drinfeld double corresponding
to $\widetilde{G}'$. We have the following proposition similar to~\cite[Proposition 3.8]{DX}.

\begin{proposition}\label{p:3.1} There exists a surjective Hopf algebra
homomorphism $F:\mathscr{D}'(\Lambda)\rightarrow
 \mathscr{D}^{r}(\Lambda)$ such that $F(x_i^{\pm})=x_i^{\pm}$ and
 $F(K_i)=K_{\theta_i}$ for $i\in I\cup J,$ and $\ker F$ is the ideal
 generated by  $\{K_j-K_{\theta_j}| j\in J\}.$
 \end{proposition}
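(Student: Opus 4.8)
The plan is to exhibit $F$ by invoking the presentation of the two Drinfeld doubles by generators and relations. The datum $\widetilde{G}'=(I\cup J,(,)')$ has, by definition, a genuine Cartan-type bilinear form on the larger index set $I\cup J$, and the associated reduced Drinfeld double $\mathscr{D}'(\llz)$ is presented by generators $x_i^{\pm}, K_i$ for $i\in I\cup J$ subject to relations (\ref{f:2.2.1})--(\ref{f:2.2.5}), but now with $K_i$ (rather than $K_{\theta_i}$) appearing everywhere and with $(i,j)'$ in place of $(\theta_i,\theta_j)$; this is exactly the Sevenhant--Van den Bergh presentation read off from $\widetilde{G}'$. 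On the other side, $\mathscr{D}^r(\llz)$ is generated by $\{x_i^{\pm}\mid i\in I\cup J\}\cup\{K_\mu\mid\mu\in\bbn[I]\}$ with the relations (\ref{f:2.2.1})--(\ref{f:2.2.5}) displayed above. First I would define $F$ on generators by $F(x_i^{\pm})=x_i^{\pm}$ and $F(K_i)=K_{\theta_i}$, and check that $F$ respects each defining relation of $\mathscr{D}'(\llz)$: relation (\ref{f:2.2.1}) goes to $K_{\theta_i}K_{\theta_j}=K_{\theta_i+\theta_j}$, which holds in $\mathscr{D}^r$; relation (\ref{f:2.2.2}) becomes $K_{\theta_i}x_j^{\pm}=v^{\pm(\theta_i,\theta_j)}x_j^{\pm}K_{\theta_i}$, which is exactly (\ref{f:2.2.2}) in $\mathscr{D}^r$ since $(i,j)'=(\theta_i,\theta_j)$; relation (\ref{f:2.2.3}) maps to $x_iy_j-y_jx_i=\dz_{ij}(K_{\theta_i}-K_{-\theta_i})/(v-v^{-1})$, again a relation of $\mathscr{D}^r$; and the Serre-type relations (\ref{f:2.2.4}), (\ref{f:2.2.5}) involve only the $x$'s and $y$'s, hence are preserved verbatim, noting that $a_{ij}$ and the condition $(\theta_i,\theta_i)=0$ are computed from the same form. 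So $F$ is a well-defined algebra homomorphism; surjectivity is immediate since the images $x_i^{\pm}$ and $K_{\theta_i}$, together with the fact that $\bbn[I]$ is generated by $I$ and each $K_i=K_{\theta_i}$ for $i\in I$, already generate all of $\mathscr{D}^r(\llz)$.

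Next I would check that $F$ is a homomorphism of Hopf algebras. Both doubles carry the standard Hopf structure induced from the Ringel--Hall comultiplication, and on generators one has $\bigtriangleup(K_i)=K_i\otimes K_i$, $\bigtriangleup(x_i^+)=x_i^+\otimes 1+K_{\theta_i}\otimes x_i^+$ (in the primitive-type normalization used after the Drinfeld double construction), with analogous formulas for $x_i^-$ and matching counit and antipode formulas; applying $F\otimes F$ and $F$ and comparing with the structure maps of $\mathscr{D}^r(\llz)$ gives compatibility on generators, and since the comultiplication, counit and antipode are all algebra (anti-)homomorphisms this extends to all of $\mathscr{D}'(\llz)$. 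This step is routine bookkeeping and I would keep it brief, pointing to Section~\ref{ss:double-hall-alg} for the formulas.

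The substantive step is the identification of $\ker F$. Let $K$ denote the ideal generated by $\{K_j-K_{\theta_j}\mid j\in J\}$. Since $F(K_j)=K_{\theta_j}=F(K_{\theta_j})$ (here $K_{\theta_j}=K_{n\dz}$ lies in the torus generated by the $K_i$, $i\in I$), we have $K\subseteq\ker F$, so $F$ factors through $\mathscr{D}'(\llz)/K$. The claim is that the induced map $\bar F:\mathscr{D}'(\llz)/K\to\mathscr{D}^r(\llz)$ is an isomorphism, and the natural way to see this is to exhibit an inverse: I would construct a homomorphism $G:\mathscr{D}^r(\llz)\to\mathscr{D}'(\llz)/K$ by $G(x_i^{\pm})=\bar x_i^{\pm}$, $G(K_\mu)=\prod_{i\in I}\bar K_i^{\,\mu_i}$ for $\mu=\sum\mu_i i\in\bbn[I]$, check (again relation by relation) that $G$ is well-defined using the presentation of $\mathscr{D}^r(\llz)$ above, and verify $G\circ F=\mathrm{id}$ and $F\circ G=\mathrm{id}$ on generators. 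The one point requiring care is that in $\mathscr{D}'(\llz)/K$ one has $\bar K_j=\bar K_{\theta_j}=\prod_{i}\bar K_i^{(\theta_j)_i}$, so the torus of $\mathscr{D}'(\llz)/K$ is exactly the free abelian group on $\{\bar K_i\mid i\in I\}$ — equivalently, the relations defining $\mathscr{D}^r(\llz)$ become, after adjoining $K_j=K_{\theta_j}$, precisely the relations (\ref{f:2.2.1})--(\ref{f:2.2.5}) of $\mathscr{D}'(\llz)$. I expect the main obstacle to be precisely this torus-bookkeeping: one must make sure that no relation among the $K_i$ ($i\in I$) other than $K_i K_j=K_{i+j}$ is forced in the quotient — i.e. that the $K_i$, $i\in I$, remain algebraically independent after killing $K_j-K_{\theta_j}$ — which follows because the defining relations of $\mathscr{D}'(\llz)$ only impose $K_iK_j=K_{i+j}$ on the torus and the quotient merely expresses the redundant generators $K_j$, $j\in J$, in terms of the $K_i$, $i\in I$. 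Once that is settled, $G$ is the two-sided inverse of $\bar F$ and the proposition follows; this mirrors the argument of~\cite[Proposition 3.8]{DX}.
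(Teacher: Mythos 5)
Your proposal is correct and is in substance the argument the paper defers to by citing \cite[Proposition~3.8]{DX}; the paper itself gives no proof beyond that citation. You define $F$ via the two presentations by generators and relations, observe that relations (\ref{f:2.2.1})--(\ref{f:2.2.5}) for $\widetilde G'$ map exactly to those for $\mathscr{D}^r$ under $K_i\mapsto K_{\theta_i}$ since $(i,j)'=(\theta_i,\theta_j)$, and identify $\ker F$ by exhibiting an inverse to the induced map on $\mathscr{D}'/(K_j-K_{\theta_j})$ --- which is exactly the intended route, and your remark on torus bookkeeping (that killing $K_j-K_{\theta_j}$ only re-expresses the redundant $K_j$, $j\in J$, without forcing new relations among the $K_i$, $i\in I$) is the one point that requires care and is handled correctly.
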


In particular, we have ${\mathscr{D}'}^{> 0}\cong \ch^{r,+}. $

\subsection{} Let
$\mathbb{U}=\mathbb{U}^-\otimes\mathbb{U}^0\otimes\mathbb{U}^+$ be
the quantized enveloping algebra in the sense of Drinfeld and Jimbo
with the generators $\{E_i,F_i,K_i,K_{-i}| i\in I\cup J\}$ subject
to relations similar to (\ref{f:2.2.1})--(\ref{f:2.2.5}) as in Section~\ref{ss:decomposition}. $\mathbb{U}$
is called the \emph{quantum extended Kac--Moody algebra}. Similar to~\cite[Corollary 8.3]{DX}, we have
$$\mathbb{U}\cong \mathscr{D}'.$$ So we obtain

\begin{proposition}\label{p:3.2}
 There exists an  algebra
 isomorphism $G:\mathbb{U}^+\stackrel{\sim}{\rightarrow} \ch^{r,+}$ such that $G(E_i)=x_i^{+}$
 for $i\in I\cup J.$
\end{proposition}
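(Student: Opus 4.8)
The plan is to deduce Proposition~\ref{p:3.2} from Proposition~\ref{p:3.1} together with the stated isomorphism $\mathbb{U}\cong\mathscr{D}'$, so that the only real work is to track the three isomorphisms through the positive parts and verify they match on generators. First I would recall that, by the remark following Proposition~\ref{p:3.1}, the surjection $F:\mathscr{D}'(\Lambda)\to\mathscr{D}^r(\Lambda)$ has kernel the ideal generated by $\{K_j-K_{\theta_j}\mid j\in J\}$, and this kernel meets $\mathscr{D}'^{>0}$ trivially (it lies in the ``Cartan'' part $\mathcal{T}'$ of the triangular decomposition of $\mathscr{D}'$); hence the restriction $F|_{\mathscr{D}'^{>0}}:\mathscr{D}'^{>0}\xrightarrow{\sim}\mathscr{D}^{r,+}(\Lambda)=\ch^{r,+}$ is an isomorphism of algebras, sending $x_i^+\mapsto x_i^+$ for $i\in I\cup J$. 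This is exactly the assertion ${\mathscr{D}'}^{>0}\cong\ch^{r,+}$ already displayed in the text.

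Next I would invoke the isomorphism $\mathbb{U}\cong\mathscr{D}'$ (the analogue of~\cite[Corollary 8.3]{DX}), which is compatible with the triangular decompositions $\mathbb{U}=\mathbb{U}^-\otimes\mathbb{U}^0\otimes\mathbb{U}^+$ and $\mathscr{D}'=\mathscr{D}'^-\otimes\mathcal{T}'\otimes\mathscr{D}'^+$, and which on generators sends $E_i\mapsto x_i^+$, $F_i\mapsto y_i$, $K_i\mapsto K_i$ for $i\in I\cup J$. Restricting to positive parts gives an algebra isomorphism $\mathbb{U}^+\xrightarrow{\sim}\mathscr{D}'^{>0}$ with $E_i\mapsto x_i^+$. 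Composing this with $F|_{\mathscr{D}'^{>0}}$ yields the desired $G:\mathbb{U}^+\xrightarrow{\sim}\ch^{r,+}$ with $G(E_i)=x_i^+$ for all $i\in I\cup J$.

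To make the argument self-contained I would spell out why the positive part is preserved: both $\mathbb{U}$ and $\mathscr{D}'$ are $\bbn[I\cup J]$-graded (equivalently $\bbz[I\cup J]$-graded with the torus in degree $0$) by the root lattice, the generators $E_i$, $x_i^+$ sit in degree $\theta_i$, and an algebra map determined on these generators automatically respects the grading; the positive subalgebra is the span of the non-negatively graded part generated by the $E_i$ (resp.\ $x_i^+$), so isomorphisms on the whole algebra that match generators restrict to isomorphisms on positive parts. I would then note that injectivity of $G$ is inherited from injectivity of $\mathbb{U}\cong\mathscr{D}'$ and of $F|_{\mathscr{D}'^{>0}}$, and surjectivity holds because $\ch^{r,+}$ is by definition generated by $\{x_i^+\mid i\in I\cup J\}$ (recall $x_i^+=u_i^+$ for $i\in I$ and $x_i^+=x_n^p$ for $i=(n\delta,p)\in J$, and these together with the composition subalgebra generate $\ch^{r,+}$ by the Sevenhant--Van den Bergh presentation in Section~\ref{ss:decomposition}).

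The main obstacle is not any single hard computation but rather the bookkeeping: one must be careful that the presentation of $\mathscr{D}'$ via the datum $\widetilde{G}'=(I\cup J,(,)')$ with $(i,j)'=(\theta_i,\theta_j)$ produces exactly the Drinfeld--Jimbo relations~(\ref{f:2.2.1})--(\ref{f:2.2.5}) defining $\mathbb{U}$ — in particular that the Cartan matrix entries $a_{ij}$ and the commutation relation $x_iy_j-y_jx_i=\delta_{ij}(K_{\theta_i}-K_{-\theta_i})/(v-v^{-1})$ transport correctly, and that the imaginary generators indexed by $J$ (where $(\theta_i,\theta_i)=0$, giving the commutativity relations~(\ref{f:2.2.5})) are handled on the $\mathbb{U}$ side the same way. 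Once the presentations are seen to coincide, the isomorphism $\mathbb{U}\cong\mathscr{D}'$ is essentially a matter of comparing generators and relations, exactly as in~\cite[Corollary 8.3]{DX}, and Proposition~\ref{p:3.2} follows by restriction.
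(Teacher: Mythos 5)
Your proposal is correct and follows essentially the same route the paper has in mind: the paper derives Proposition 3.2 immediately from $\mathbb{U}\cong\mathscr{D}'$ (the analogue of \cite[Corollary 8.3]{DX}) together with the remark ${\mathscr{D}'}^{>0}\cong\ch^{r,+}$ following Proposition 3.1, and you have simply made explicit the composition and the bookkeeping about gradings and generators that the paper leaves implicit.
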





\section{PBW-basis of $\mathbb{U}^+$}\label{s:4}

\subsection{}\label{ss:4.1}
Let $Q$ be a connected tame quiver without oriented cycles with vertex set $I$, and let $\llz=\bbf_q Q$ be the path algebra of $Q$ over the finite field $\bbf_q$. Recall that the regular part of $\mod\llz$ is the direct sum of a family of homogeneous tubes and finitely many non-homogeneous tubes $\ct_1,\ldots,\ct_l$.

\begin{lemma}\label{l:regular-commutative}
 Let $M$ be a module in the direct sum of the homogeneous tubes, and $N$ be a regular module. Then $u_{[M]}*u_{[N]}=u_{[N]}*u_{[M]}$. If in addition no direct summands of $N$ belongs to the same tube as any indecomposable dirct summand of $M$, then $u_{[M]}*u_{[N]}=u_{[N]}*u_{[M]}=u_{[M\oplus N]}$.
\end{lemma}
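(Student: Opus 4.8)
The plan is to reduce the statement to an analysis of Hall numbers $g^L_{MN}$ and $g^L_{NM}$ using the structural facts about tubes collected in Lemma~\ref{l:2.1.1} and the remarks following it. First I would recall that in the twisted Ringel--Hall algebra the product $u_{[M]}*u_{[N]}$ is $v^{\lr{\udim M,\udim N}}\sum_L g^L_{MN}u_{[L]}$, so that commutativity $u_{[M]}*u_{[N]}=u_{[N]}*u_{[M]}$ is equivalent to the identity $v^{\lr{\udim M,\udim N}}g^L_{MN}=v^{\lr{\udim N,\udim M}}g^L_{NM}$ for every module $L$ appearing on either side. Since $M$ lies in the direct sum of homogeneous tubes and $N$ is regular, any such $L$ is regular (the regular part of $\mod\llz$ is an exact abelian subcategory closed under extensions), so $\udim M$ and $\udim N$ are both multiples of regular roots; the key point will be to show the Euler-form prefactors agree, i.e. $\lr{\udim M,\udim N}=\lr{\udim N,\udim M}$, which amounts to $(\udim M,\udim N)=2\lr{\udim M,\udim N}$ being forced, or more directly that the symmetric form behaves well because regular dimension vectors lie in (or near) the radical $\bbz\delta$ of $(-,-)$.

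The main technical step is to establish the numerical identity $g^L_{MN}=g^L_{NM}$ for $M$ in the homogeneous part. Here I would use that a homogeneous tube $\mathscr{H}_x$ is a uniserial abelian category equivalent to the category of finite-length modules over a complete discrete valuation ring (or, field-theoretically, that $M$ in a homogeneous tube has $\ed_\llz(M)$ commutative and $\Ext^1$-self-extensions controlled likewise), so that the Green--Riedtmann type symmetry of Hall numbers in such a setting applies; alternatively one invokes $\Hom(M,N)\cong\Hom(N,M)$ and $\Ext^1(M,N)\cong\Ext^1(N,M)$ via Serre duality on the regular part (the AR-translate acts as the identity on objects of a homogeneous tube up to the relevant twist) to get, by the Riedtmann--Peng formula $g^L_{MN}=\frac{|\Ext^1(M,N)_L|}{|\Hom(M,N)|}\cdot\frac{a_L}{a_M a_N}$, that the two Hall numbers coincide. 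I expect this symmetry to be the crux; it is the place where "$M$ in a homogeneous tube" is genuinely used, as opposed to $M$ merely regular.

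For the second assertion, assume additionally that no indecomposable summand of $N$ lies in the same tube as any indecomposable summand of $M$. Then by Lemma~\ref{l:2.1.1}(c) we have $\Hom(M,N)=\Ext^1(M,N)=0$ (and symmetrically $\Hom(N,M)=\Ext^1(N,M)=0$), so the only short exact sequence $0\to N\to L\to M\to 0$ is the split one, giving $g^L_{MN}=\delta_{L,M\oplus N}$ and likewise $g^L_{NM}=\delta_{L,M\oplus N}$; moreover $\Ext^1(M,N)=0$ forces $\lr{\udim M,\udim N}=\dim\Hom(M,N)=0$, so the prefactor $v^{\lr{\udim M,\udim N}}$ equals $1$ and $u_{[M]}*u_{[N]}=u_{[M\oplus N]}$, and symmetrically for $u_{[N]}*u_{[M]}$. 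Finally, to reduce the general first statement to the case treated above, I would decompose $N=N'\oplus N''$ where $N'$ is the part of $N$ lying in homogeneous tubes and $N''$ the part lying in non-homogeneous tubes; the $M$--$N''$ commutation is immediate from Lemma~\ref{l:2.1.1}(c) and the vanishing argument just given, while the $M$--$N'$ commutation follows from the homogeneous-tube symmetry, and one assembles the pieces using associativity of $*$ together with the fact (again Lemma~\ref{l:2.1.1}(c) plus the split-sequence argument) that the homogeneous and non-homogeneous regular parts of $N$ commute with each other and multiply to $u_{[N'\oplus N'']}=u_{[N]}$. The only real obstacle is the Hall-number symmetry in a homogeneous tube; everything else is bookkeeping with the orthogonality relations of Lemma~\ref{l:2.1.1}.
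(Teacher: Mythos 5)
Your argument follows essentially the same route as the paper's: decompose $N=N'\oplus N''$ into its homogeneous and non-homogeneous parts, handle the disjoint case by Lemma~\ref{l:2.1.1}(c) together with the vanishing of the Euler twist (which holds because $\udim M\in\bbz\delta$ and $N$ is regular, so $\lr{\udim M,\udim N}=\lr{\udim N,\udim M}=0$), handle the homogeneous-homogeneous case by the commutativity of the Hall algebra of a homogeneous tube, and assemble the pieces with associativity --- the paper cites~\cite{Zh} for precisely the homogeneous-tube commutativity that you derive from the DVR picture.

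One caution on a secondary point: your ``Serre duality'' alternative for $g^L_{MN}=g^L_{NM}$ does not go through as stated. Serre duality with $\tau\cong\mathrm{id}$ on a homogeneous tube gives $\Ext^1(M,N)\cong D\Hom(N,M)$, which is a \emph{duality} relating Ext one way to Hom the other way, and does not produce the fibrewise identity $|\Ext^1(M,N)_L|=|\Ext^1(N,M)_L|$ that the Riedtmann--Peng formula would need. Your first alternative (the classical commutative Hall algebra of nilpotent modules over a discrete valuation ring) is the correct one, and is what the cited reference~\cite{Zh} establishes; you should drop or reformulate the duality variant.
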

\begin{proof}
 If no direct summands of $N$ belongs to the same tube as any indecomposable dirct summand of $M$, the statement follows from Lemma~\ref{l:2.1.1} (c) and the fact that the dimension vector of $M$ is a multiple of $\delta$, which lies in the radical of the Euler form. If $N$ is also in the direct sum of the homogeneous tubes, the statement is proved in~\cite{Zh}. Generally we write $u_{[N]}=u_{[N_1]}*u_{[N_2]}$ with $N_1\oplus N_2\cong N$ such that $N_1$ belongs to the direct sum of the homogeneous tubes and $N_2$ belongs to the direct sum of the non-homogeneous tubes. Then
\begin{eqnarray*}
u_{[M]}*u_{[N]}&=&u_{[M]}*u_{[N_1]}*u_{[N_2]}\hspace{7pt}=\hspace{7pt}u_{[N_1]}*u_{[M]}*u_{[N_2]}\\
&=&u_{[N_1]}*u_{[N_2]}*u_{[M]}\hspace{7pt}=\hspace{7pt}u_{[N]}*u_{[M]}.
\end{eqnarray*}
\end{proof}

Let $P$, $L$, $K$ and $\fk{C}(P,L)$ be as in Section~\ref{ss:tame-quiver}, and let $F:\mod K\cong\fk{C}(P,L)\hookrightarrow\mod\llz$ be the exact embedding.
It gives rise to an injective homomorphism of algebras, still denoted
by $F:\ch^*(K)\hookrightarrow\ch^*(\llz).$ In $\ch^*(K)$ a distinguished element $E_{m\dz_K}$ is defined for any $m\geq 1$, and we set $E_{m\dz}=F(E_{m\dz_K}),$ see~\cite{LXZ} for more details. Since $E_{m\dz_K}\in
\cc^*(K),$ and $\lr{L}, \lr{P}\in \cc^*(\llz)$, it follows that $E_{m\dz}$ is in
$\cc^*(\llz)$ and even  in $\cc^*(\llz)_{\cz}$, where $\cz=\bbq[v,v^{-1}]$.  Let $\cal K$ be the
subalgebra of $\cc^*(\llz)$ generated by $E_{m\dz}$ for $m\in\bbn,$
it is a polynomial ring on infinitely many variables
$\{E_{m\dz}|m\geq1\},$ and its integral form is the polynomial ring
on variables $\{E_{m\dz}|m\geq1\}$ over $\cz.$

We denote by $\fk{C}_0$ (respectively, $\fk{C}_1$) the full subcategory of
$\fk{C}(P,L)$ consisting of the $\Lambda$-modules which belong to
homogeneous (respectively, non-homogeneous) tubes of $\mod\Lambda.$

We now decompose $E_{n\dz}$ as
   $$ E_{n\delta}=E_{n\delta,1}+E_{n\delta,2}+E_{n\delta,3} , $$
 where
\begin{eqnarray}
   E_{n\delta,1}&=&v^{-n|\delta|}
   \sum_{[M]:M\in\fk{C}_1,\udim M=n\delta}u_{[M]}\label{f:4.1.1}\\
    E_{n\delta,2}&=&v^{-n|\delta|}
   \sum_{\substack{[M]:M\in\fk{C},\udim M=n\delta\\
   M=M_1\oplus M_2, 0\neq M_1\in \fk{C}_1, 0\neq M_2\in
   \fk{C}_0}}u_{[M]}\label{f:4.1.2}\\
   E_{n\delta,3}&=&
   v^{-n|\delta|}\sum_{[M]:M\in\fk{C}_0, \udim M=n\delta}u_{[M]},\label{f:4.1.3}
   \end{eqnarray}
where $|\delta|$ is the sum of all entries of $\delta$.

\begin{lemma}\label{l:E-n-delta} Let $n,n'$ be two positive integers. Then we have
\begin{itemize}
 \item[(a)] $E_{n\delta,1}*E_{n'\delta,3}=E_{n'\delta,3}*E_{n\delta,1}$;
 \item[(b)] $E_{n\delta,2}=\sum_{m=1}^{n-1} E_{m\delta,1}*E_{(n-m)\delta,3}$;
 \item[(c)] $E_{n\delta,3}*E_{n'\delta,3}=E_{n'\delta,3}*E_{n\delta,3}$.
\end{itemize}
\end{lemma}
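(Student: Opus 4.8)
The plan is to deduce all three identities from Lemma~\ref{l:regular-commutative} together with the Krull--Schmidt theorem, settling the two commutation statements (a) and (c) first (these are purely formal) and then the bookkeeping statement (b). For~(c): by~(\ref{f:4.1.3}) both $E_{n\delta,3}$ and $E_{n'\delta,3}$ are $\bbq(v)$-linear combinations of basis elements $u_{[M]}$ with $M$ in the direct sum of the homogeneous tubes, and by the case of Lemma~\ref{l:regular-commutative} in which \emph{both} modules lie in the direct sum of the homogeneous tubes (proved in~\cite{Zh}) any two such $u_{[M]},u_{[N]}$ commute in $\ch^*(\llz)$; bilinearity of $*$ then gives~(c). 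For~(a): by~(\ref{f:4.1.1}) the element $E_{n\delta,1}$ is a linear combination of $u_{[M_1]}$ with $M_1\in\fk{C}_1$, and by~(\ref{f:4.1.3}) the element $E_{n'\delta,3}$ is a linear combination of $u_{[M_2]}$ with $M_2\in\fk{C}_0$. The indecomposable summands of $M_1$ lie in non-homogeneous tubes and those of $M_2$ in homogeneous tubes, so no summand of $M_1$ lies in the same tube as a summand of $M_2$; hence Lemma~\ref{l:regular-commutative}, applied with $M_2$ in the role of $M$ and $M_1$ in the role of $N$, gives
\[ u_{[M_1]}*u_{[M_2]}=u_{[M_2]}*u_{[M_1]}=u_{[M_1\oplus M_2]}, \]
so in particular $E_{n\delta,1}*E_{n'\delta,3}=E_{n'\delta,3}*E_{n\delta,1}$, which is~(a).

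For~(b) I would first use~(\ref{f:4.1.1}),~(\ref{f:4.1.3}) and the identity just displayed to expand
\[ E_{m\delta,1}*E_{(n-m)\delta,3}=v^{-n|\delta|}\sum_{\substack{[M_1]:\;M_1\in\fk{C}_1,\ \udim M_1=m\delta\\ [M_2]:\;M_2\in\fk{C}_0,\ \udim M_2=(n-m)\delta}} u_{[M_1\oplus M_2]}, \]
then sum over $1\le m\le n-1$ and compare with~(\ref{f:4.1.2}). The only real point is that $(M_1,M_2)\mapsto M_1\oplus M_2$ induces, up to isomorphism, a bijection between the pairs indexing $\sum_{m=1}^{n-1}E_{m\delta,1}*E_{(n-m)\delta,3}$ and the modules indexing $E_{n\delta,2}$. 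That each module $M_1\oplus M_2$ appearing on the left has the shape required in~(\ref{f:4.1.2}) is immediate (both summands are nonzero, one in $\fk{C}_1$, one in $\fk{C}_0$, and the total dimension vector is $m\delta+(n-m)\delta=n\delta$). Conversely, given $M=M_1\oplus M_2$ as in~(\ref{f:4.1.2}), the Krull--Schmidt theorem together with the fact that indecomposables in distinct tubes are non-isomorphic (Lemma~\ref{l:2.1.1}(c)) forces $M_1$ to be the sum of the indecomposable summands of $M$ lying in non-homogeneous tubes and $M_2$ the sum of those lying in homogeneous tubes, so the pair $(M_1,M_2)$ is determined by $M$; moreover every module in a homogeneous tube has dimension vector a multiple of $\delta$, whence $\udim M_2=(n-m)\delta$ for a uniquely determined $m$, and $M_1,M_2\neq 0$ forces $1\le m\le n-1$. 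Thus the terms on the two sides match exactly, each carrying the coefficient $v^{-n|\delta|}$, and~(b) follows.

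The step I expect to be the main (indeed essentially the only) obstacle is the combinatorial bookkeeping in~(b), and within it the verification that the splitting index $m$ is well defined and ranges precisely over $1,\dots,n-1$; beyond Lemma~\ref{l:regular-commutative}, Lemma~\ref{l:2.1.1} and the Krull--Schmidt theorem, no further input is needed.
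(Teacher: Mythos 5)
Your proof is correct and follows essentially the same route as the paper: parts (a) and (c) are deduced directly from Lemma~\ref{l:regular-commutative}, and part (b) is established by the same reindexing computation (the paper starts from $E_{n\delta,2}$, splits $u_{[M_1\oplus M_2]}=u_{[M_1]}*u_{[M_2]}$, and regroups; you run the same computation in reverse). Your added explanation of why the reindexing in (b) is a bijection — via Krull--Schmidt, the disjointness of tubes from Lemma~\ref{l:2.1.1}(c), and the fact that modules in homogeneous tubes have dimension vector a multiple of $\delta$ — is a slight elaboration of what the paper leaves implicit, but adds nothing essentially new.
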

\begin{proof} (a) and (c) follows from Lemma~\ref{l:regular-commutative}. (b) holds because
 \begin{eqnarray*}
E_{n\delta,2}
&=&v^{-n|\delta|}
   \sum_{\substack{[M_1\oplus M_2]:\udim (M_1\oplus M_2)=n\delta,\\
   0\neq M_1\in \fk{C}_1, 0\neq M_2\in
   \fk{C}_0}}u_{[M_1\oplus M_2]}\\
&=&v^{-n|\delta|}
   \sum_{\substack{[M_1\oplus M_2]:\udim (M_1\oplus M_2)=n\delta,\\
   0\neq M_1\in \fk{C}_1, 0\neq M_2\in
   \fk{C}_0}}u_{[M_1]}*u_{[ M_2]}\\
&=&v^{-n|\delta|}\sum_{m=1}^{n-1}
   \sum_{\substack{[M_1]:M_1\in\fk{C}_1,\\ \udim M_1=m\delta}}
   \sum_{\substack{[M_2]:M_2\in\fk{C}_0,\\ \udim M_2=(n-m)\delta}}u_{[M_1]}*u_{[ M_2]}\\
&=&\sum_{m=1}^{n-1}
   (\sum_{\substack{[M_1]:M_1\in\fk{C}_1,\\ \udim M_1=m\delta}}v^{-m|\delta|}u_{[M_1]})
   *(\sum_{\substack{[M_2]:M_2\in\fk{C}_0,\\ \udim M_2=(n-m)\delta}}v^{-(n-m)|\delta|}u_{[M_2]})\\
&=&\sum_{m=1}^{n-1} E_{m\delta,1}*E_{(n-m)\delta,3}.
\end{eqnarray*}
\end{proof}

For a partition
$\textbf{w}=(w_1,\ldots,w_t)$ of $n,$ we define
\begin{eqnarray*}
E_{\textbf{w}\dz}&=&E_{w_1\dz}*\cdots*E_{w_t\dz}\\
E_{\textbf{w}\dz,3}&=&E_{w_1\dz,3}*\cdots*E_{w_t\dz,3}.\end{eqnarray*}
Let ${\bf P}(n)$ be the set of all partitions of $n,$ and
$\lr{N}=v^{-\dim N+\dim \ed(N)}u_{[N]}.$ Set
$${\bf
B}=\{\lr{P}*\lr{M}*E_{\textbf{w}\dz,3}*\lr{I}|~ P\in\cp_{prep},
M\in\bigoplus_{i=1}^l\ct_i, I\in\cp_{prei},{\bf w}\in{\bf P}(n),
n\in\bbn\},$$
where recall that $\cp_{prep}$ respectively $\cp_{prei}$ is the set of isomorphism classes of preprojective respectively preinjective $\llz$-modules, and $\ct_1,\ldots,\ct_l$ are the non-homogeneous tubes of $\mod\llz$.
Then we have the following:


\begin{theorem}\label{t:4.1.1}  The set ${\bf B}$ is a
$\bbq(v)-$basis of $\ch^{r,+}$ ($\cong\mathbb{U}^+$).
\end{theorem}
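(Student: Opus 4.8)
The plan is to prove that $\mathbf{B}$ spans $\ch^{r,+}$ and is linearly independent, using the structural results already in place: the decomposition $\ch^{r,+}\cong\mathcal{C}^+\otimes\bbq(v)[x_n^i]$ from Proposition~\ref{p:2.2.2}(b), the known PBW-type basis of $\mathcal{C}^+$ coming from~\cite{LXZ} (the elements listed in the proof of Lemma~\ref{l:2.2.1}), and the commutation/factorization identities for the $E_{n\dz,i}$ from Lemma~\ref{l:E-n-delta}. First I would recall that $\mathcal{C}^+$ has a basis consisting of products $\lr{P}*\lr{M}*E_{\mathbf{w}\dz}*\lr{I}$ with $P\in\cp_{prep}$, $M\in\bigoplus_i\ct_i$ (a product over the non-homogeneous tubes of the real-root and the "difference" imaginary-root elements $u_{j,m\dz,i}-u_{j+1,m\dz,i}$), $I\in\cp_{prei}$, and $\mathbf{w}$ a partition; this is essentially~\cite[Prop.~6.5]{LXZ}. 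So the task reduces to showing that replacing the $\mathcal{C}^+$-part $E_{\mathbf{w}\dz}$ by $E_{\mathbf{w}\dz,3}$ and allowing $\ch^{r,+}$ instead of $\mathcal{C}^+$ is exactly compensated by enlarging the central polynomial part, i.e. that $E_{\mathbf{w}\dz,3}$ together with the extra generators $u_{j,m\dz,i}$ (all $j$, not just the differences) give the same span as $E_{\mathbf{w}\dz}$ together with the central variables $x_n^i$.

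The key steps, in order: (1) Using Lemma~\ref{l:E-n-delta}(b), express $E_{n\dz}=E_{n\dz,1}+E_{n\dz,2}+E_{n\dz,3}$ with $E_{n\dz,2}$ a polynomial (under $*$) in the $E_{m\dz,1}$ and $E_{m\dz,3}$; since $E_{m\dz,1}\in\mathcal{C}^+$ is a function of the $u_{j,m\dz,i}$ and $E_{m\dz,3}$ is the homogeneous-tube part, conclude that the subalgebra generated by $\{E_{n\dz,3}\}$ and $\{u_{j,m\dz,i}\}$ contains all $E_{n\dz}$; combined with Lemma~\ref{l:E-n-delta}(a),(c) and Lemma~\ref{l:regular-commutative}, this subalgebra is commutative, polynomial in the $E_{n\dz,3}$ over the part generated by the regular real-root and non-homogeneous imaginary-root elements. (2) Identify $E_{n\dz,3}$ with (a scalar multiple of) the generator of $\cal K_{n\dz}$ inside the homogeneous-tube Hall subalgebra; by Proposition~\ref{p:2.2.2}(b), the homogeneous-tube contribution is precisely the central polynomial algebra $\bbq(v)[x_n^i]$ of rank $l$, and one matches the $l$ elements $x_n^1,\dots,x_n^l$ with $E_{n\dz,3}$ together with $l-1$ of the "difference-to-non-difference" discrepancies $u_{r_i,m\dz,i}$ — here the arithmetic $\sum(r_i-1)=|I|-2$ from Lemma~\ref{l:2.1.1}(a) and the count $\eta_{n\dz}=l$ from Lemma~\ref{l:2.2.1} must balance. (3) Having shown $\mathbf{B}$ spans, prove linear independence by a graded dimension count: in each dimension vector $\mu=\gz+n\dz+\gz'$ (preprojective part $\gz$, preinjective part $\gz'$, regular part $n\dz$ plus real-root shifts), the number of elements of $\mathbf{B}$ equals $\dim\ch^{r,+}_\mu$, which one computes from Proposition~\ref{p:2.2.2}(b) as $\dim\mathcal{C}^+_\mu$ convolved with the number of ways to write the $x$-degree as a sum, and the PBW basis of $\mathcal{C}^+$ gives the matching count on the other side.

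The main obstacle I expect is step~(2): carefully tracking the change of basis in the $n\dz$-graded pieces between the $\mathcal{C}^+$-description (real roots for each non-homogeneous tube, plus $r_i-1$ difference vectors per tube, plus one central $E_{m\dz}$-type element) and the $\ch^{r,+}$-description (real roots, plus $r_i$ vectors $u_{j,m\dz,i}$ per tube, plus $E_{m\dz,3}$), and verifying that the resulting transition matrix is invertible over $\bbq(v)$ degree by degree — equivalently, that $E_{m\dz,3}$ is genuinely "new" modulo $\mathcal{C}^+$ and not already expressible via the $u_{j,m\dz,i}$. This is where Lemma~\ref{l:E-n-delta}(b) does the real work: it shows $E_{m\dz,3}$ differs from the $\mathcal{C}^+$-generator $E_{m\dz}$ only by lower products also involving $E_{m\dz,3}$, so an induction on $m$ untangles the two filtrations. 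The preprojective/preinjective factors $\lr{P}$ and $\lr{I}$ cause no trouble because of the Hom/Ext vanishing in Lemma~\ref{l:2.1.1}(b), which lets the ordering $\lr{P}*(\text{regular})*\lr{I}$ be normalized exactly as in~\cite{LXZ}; so the entire difficulty is concentrated in the purely regular, $n\dz$-graded summands.
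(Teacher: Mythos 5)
Your spanning argument (step (1)) is essentially the argument the paper gives: Lemma~\ref{l:E-n-delta}(b) is the engine, used to rewrite each $E_{w_j\dz}$ as a polynomial (under $*$) in the $E_{m\dz,1}$ and $E_{m\dz,3}$, after which (a) and (c) let one push the $E_{m\dz,1}$-factors into the $\lr{M}$-part and collect the $E_{m\dz,3}$-factors into $E_{\mathbf{w}'\dz,3}$. Where you diverge is in linear independence. The paper simply asserts it, implicitly relying on the fact that each element of $\mathbf{B}$ has a distinct leading term $\lr{N}$ in the standard basis of $\ch^*(\llz)$ with respect to the degeneration order — because $\co_P\star\co_M\star\cn_{\mathbf{w},3}\star\co_I$ has a unique dense orbit — so no nontrivial linear combination can vanish. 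Your proposed route, a graded dimension count via Proposition~\ref{p:2.2.2}(b) and the PBW basis of $\cc^+$, is a legitimate alternative, but two points need tightening. First, your recollection of $\mathbf{B}^c$ is slightly off: the LXZ PBW basis of $\cc^+$ (Proposition 7.2, not 6.5, of \cite{LXZ}) has middle part $E_{\pi_1}*\cdots*E_{\pi_l}*E_{\mathbf{w}\dz}$ with $\pi_i$ \emph{aperiodic} $r_i$-tuples of partitions, not a single $\lr{M}$; this matters for the count. Second, and more seriously, your step (2) tries to match the central generators $x_n^1,\dots,x_n^l$ element-by-element with $E_{n\dz,3}$ and ``discrepancies'' $u_{r_i,n\dz,i}$; but the $x_n^p$ are defined only via orthogonality to $\mathscr{D}^r(m-1)^-$ in Section~\ref{ss:decomposition}, not as concrete Hall-algebra elements, so there is no clean identification of that sort. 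What you actually need — and you do gesture at it — is only that the transition matrix between the two spanning sets in each graded piece $\mu$ is square and invertible over $\bbq(v)$; for this the dimension count suffices and the precise matching is unnecessary. If you replace the naive matching in step (2) by a careful count using Lemma~\ref{l:2.1.1}(a) ($\sum(r_i-1)=|I|-2$), Lemma~\ref{l:2.2.1} ($\eta_{n\dz}=l$), and the equality $\sum r_i = |I|-2+l$, the argument goes through. Still, the leading-term route the paper implicitly uses is shorter and avoids the bookkeeping entirely.
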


\begin{proof}
We have by definition that $E_{n\delta}$ and $E_{n\delta,1}$ belong
to $\ch^{r,+}$. Then it follows from Lemma~\ref{l:E-n-delta} (b) by
induction on $n$ that both $E_{n\delta,2}$ and $E_{n\delta,3}$
belong to $\ch^{r,+}$. As a consequence, the set ${\bf B}$ is contained in
$\ch^{r,+}$. Because ${\bf B}$ is linear independent over $\bbq(v)$,
it remains to show that ${\bf B}$ linearly spans $\ch^{r,+}$.

Let $\Pi_i^a$ be the set of aperiodic $r_i$-tuples
of partitions, for all $1\leqslant i\leqslant l.$ Set
\begin{eqnarray*}
  {\bf B}^c&=&\{\lr{P}\ast E_{\pi_1
}\ast\cdots\ast  E_{\pi_l }\ast E_{{\bf
w}\dz}\ast\lr{I}:\\
&&P\in\cp_{prep},I\in\cp_{prei},\pi_i\in\Pi_i^a,1\leqslant i\leqslant l, {\bf
w}\in{\bf P}(n),n\in\bbn\}.
 \end{eqnarray*}
By  Proposition~7.2 in \cite{LXZ}, we know that ${\bf B}^c$
 is a
$\bbq(v)$-basis of $\cc^*(\llz).$  By definition $\ch^{r,+}$ is
generated by ${\bf B}^c$ and $\{u_{[M]}:M\in\bigoplus_{i=1}^l
\ct_i\}$. The latter elements belong to ${\bf B}$, so now it remains
to prove that each element in ${\bf B}^c$ ,$u_{[M]}\ast\lr{P}$ and
$(E_{{\bf w}\dz}\ast\lr{I})\ast u_{[M]}$ can be linearly spanned by
${\bf B}$.

For ${\bf w}=(w_1,\ldots,w_t)\in {\bf P}(n)$, we have by Lemma~\ref{l:E-n-delta} (b)
\begin{eqnarray*}
E_{{\bf w}\delta}&=&E_{w_1\delta}*\cdots*E_{w_t\delta}\\
&=&\prod_{j=1}^t(E_{w_j\delta,1}+\sum_{m_j=1}^{w_j-1}E_{m_j\delta,1}*E_{(w_j-m_j)\delta,3}+E_{w_j\delta,3})
\end{eqnarray*}
By Lemma~\ref{l:E-n-delta} (a) (c), we can write $E_{{\bf w}\delta}$ as a linear combination of elements of the form
$E_{m_1\delta,1}*\cdots *E_{m_r\delta,1}*E_{m'_1\delta,3}*\cdots*E_{m'_{r'}\delta,3}$,
which itself is a linear combination of elements of the form $\langle M\rangle*E_{{\bf w'}\delta,3}$,
where $M\in\bigoplus_{i=1}^l \ct_i$ and ${\bf w'}$ is a partition.

Similarly, we can prove that each element of form
$u_{[M]}\ast\lr{P}$ and $(E_{{\bf w}\dz}\ast\lr{I})\ast u_{[M]}$ can
be linearly spanned by ${\bf B}$. This completes the proof.
\end{proof}

\section{ Hall polynomials of affine type }

\bigskip

\subsection{} In \cite{H}, A.Hubery has proved the existence of Hall
polynomials for tame quivers for Segre classes by using comultiplication. In this subsection,
by using the rational Ringel-Hall algebras, we
give a simple and direct proof for the existence of Hall polynomials
for tame quivers.

Let $\ch^{r,x_1,x_2,\cdots,x_t}$ be a rational Ringel-Hall algebra as in section 2.1.
We now give a new decomposition of $E_{n\dz}$ as follows

   $$ E_{n\delta}=E_{n\delta,1}+E_{n\delta,2}+E_{n\delta,3} , $$

 where
 \begin{eqnarray}\label{f:8.1.1}
   E_{n\delta,1}=v^{-n \dim S_1-n \dim S_2}
   \sum_{[M],M\in\fk{C}_1\bigoplus\oplus_{x_t}\mathscr{H}_{x_t},\udim
   M=n\delta}u_{[M]},
     {\qquad\qquad\qquad\quad}\end{eqnarray}
 \begin{eqnarray}\label{f:8.1.2}
    E_{n\delta,2}=v^{-n \dim S_1-n \dim S_2}
   \sum_{\substack{[M],\udim M=n\delta,\\
   M=M_1\oplus M_2, 0\neq M_1\in \fk{C}_1\bigoplus\oplus_{x_t}\mathscr{H}_{x_t}, 0\neq M_2\in
   \fk{C}_0\backslash\oplus_{x_t}\mathscr{H}_{x_t}}}u_{[M]},\end{eqnarray}
 \begin{eqnarray}\label{f:8.1.3}
   E_{n\delta,3}=
   v^{-n \dim S_1-n \dim S_2}\sum_{[M], M\in\fk{C}_0\backslash\oplus_{x_t}\mathscr{H}_{x_t},
   \udim M=n\delta}u_{[M]}. {\qquad\qquad\qquad\quad\quad}
  \end{eqnarray}

  Let
$\textbf{w}=(w_1,\cdots,w_t)$ be a partition of $n,$ we then define
$$E_{\textbf{w}\dz,3}=E_{w_1\dz,3}*\cdots*E_{w_t\dz,3}.$$

Let ${\bf P}(n)$ be the set of all partitions of $n,$ and
$\lr{N}=v^{-dim N+dim End(N)}u_{[N]}.$ Set
$${\bf
B}'=\{\lr{P}*\lr{M}*E_{\textbf{w}\dz,3}*\lr{I}|~ P\in\cp_{prep},
M\in\oplus_{i=1}^l\ct_i\bigoplus\oplus_{x_t}\mathscr{H}_{x_t},
I\in\cp_{prei},{\bf w}\in{\bf P}(n), n\in\bbn\}.$$

Similar to Theorem ~4.1.1,   we have the following:

\begin{proposition}\label{p:8.1.1 } {\sl  The set ${\bf B}'$ is a
$\bbq(v)-$ basis of $\ch^{r,x_1,x_2,\cdots,x_t}$.} \end{proposition}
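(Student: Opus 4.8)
The plan is to mimic the proof of Theorem~\ref{t:4.1.1} almost verbatim, replacing the category $\fk{C}_0$ of modules lying in homogeneous tubes by the smaller category $\fk{C}_0\backslash\oplus_{x_t}\mathscr{H}_{x_t}$, and correspondingly enlarging the ``regular'' generating set from $\oplus_{i=1}^l\ct_i$ to $\oplus_{i=1}^l\ct_i\bigoplus\oplus_{x_t}\mathscr{H}_{x_t}$. First I would establish the analogue of Lemma~\ref{l:E-n-delta}: namely that with the new decomposition $E_{n\delta}=E_{n\delta,1}+E_{n\delta,2}+E_{n\delta,3}$ one still has $E_{n\delta,1}*E_{n'\delta,3}=E_{n'\delta,3}*E_{n\delta,1}$, the recursion $E_{n\delta,2}=\sum_{m=1}^{n-1}E_{m\delta,1}*E_{(n-m)\delta,3}$, and $E_{n\delta,3}*E_{n'\delta,3}=E_{n'\delta,3}*E_{n\delta,3}$. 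All three statements reduce, exactly as before, to Lemma~\ref{l:regular-commutative} and Lemma~\ref{l:2.1.1}(c): the point is that a module in $\fk{C}_1$ together with the finitely many chosen homogeneous tubes $\mathscr{H}_{x_t}$ has no $\mathrm{Hom}$ or $\mathrm{Ext}$ with a module in $\fk{C}_0\backslash\oplus_{x_t}\mathscr{H}_{x_t}$, since these lie in different tubes, and the dimension vectors involved are multiples of $\delta$, which lies in the radical of the Euler form, so all the Euler-form twists in the Hall product vanish and the multiplication degenerates to $u_{[M_1]}*u_{[M_2]}=u_{[M_1\oplus M_2]}$.

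Given this, linear independence of ${\bf B}'$ over $\bbq(v)$ is immediate, since the leading term (with respect to the direct-sum decomposition into preprojective, regular, preinjective parts, and then into tubes) of $\lr{P}*\lr{M}*E_{\textbf{w}\dz,3}*\lr{I}$ is a scalar multiple of $u_{[P\oplus M\oplus M_3\oplus I]}$ with $M_3\in\fk{C}_0\backslash\oplus_{x_t}\mathscr{H}_{x_t}$ of dimension vector $n\delta$ and the partition $\textbf{w}$ recoverable from $M_3$; distinct elements of ${\bf B}'$ give distinct isomorphism classes. That ${\bf B}'\subseteq\ch^{r,x_1,\cdots,x_t}$ follows because $E_{n\delta}$ and $E_{n\delta,1}$ lie in $\ch^{r,x_1,\cdots,x_t}$ by construction (the former is $F(E_{n\delta_K})$, the latter is a sum of $u_{[M]}$ over modules in the non-homogeneous tubes and the chosen homogeneous tubes, all of which are generators), and then the recursion in (b) gives $E_{n\delta,2},E_{n\delta,3}\in\ch^{r,x_1,\cdots,x_t}$ by induction on $n$. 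For spanning I would start from the known basis ${\bf B}^c$ of $\cc^*(\llz)$ (Proposition~7.2 of \cite{LXZ}), observe that $\ch^{r,x_1,\cdots,x_t}$ is generated by ${\bf B}^c$ together with $\{u_{[M]}:M\in\oplus_{i=1}^l\ct_i\bigoplus\oplus_{x_t}\mathscr{H}_{x_t}\}$, and then argue as in Theorem~\ref{t:4.1.1} that a product $E_{\textbf{w}\delta}$ expands, using the new Lemma~\ref{l:E-n-delta}(a),(b),(c), into a $\bbq(v)$-combination of terms $E_{m_1\delta,1}*\cdots*E_{m_r\delta,1}*E_{m'_1\delta,3}*\cdots*E_{m'_{r'}\delta,3}$, each of which is in turn a combination of $\lr{M}*E_{\textbf{w}'\delta,3}$ with $M\in\oplus_{i=1}^l\ct_i\bigoplus\oplus_{x_t}\mathscr{H}_{x_t}$; finally one absorbs the stray factors $u_{[M]}*\lr{P}$ and $(E_{\textbf{w}\delta}*\lr{I})*u_{[M]}$ into ${\bf B}'$ using Lemma~\ref{l:2.1.1}(b),(d) exactly as in the earlier proof.

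The main obstacle, and the only place where genuine care beyond ``copy Theorem~\ref{t:4.1.1}'' is needed, is checking that $E_{n\delta,1}$ as redefined in (\ref{f:8.1.1}) — a sum over all modules of dimension $n\delta$ lying in $\fk{C}_1\bigoplus\oplus_{x_t}\mathscr{H}_{x_t}$ — actually lies in $\ch^{r,x_1,\cdots,x_t}$, i.e. that the normalization factor $v^{-n\dim S_1-n\dim S_2}$ is correct and that this element really coincides, after the $\fk{C}_0/\fk{C}_1$ splitting, with the $E_{n\delta,1}$-part one extracts from $E_{n\delta}=F(E_{n\delta_K})$; this is a bookkeeping check about how the Kronecker-type element $E_{n\delta_K}$ decomposes under $\fk{C}(P,L)=\fk{C}_0\oplus\fk{C}_1$ once one refines $\fk{C}_0$ into the chosen homogeneous tubes and the rest, and it is the analogue of the identities (\ref{f:4.1.1})--(\ref{f:4.1.3}) in the $\ch^r(\llz)$ case. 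Once that is in place the rest of the argument is formally identical, so I would write ``the proof is entirely parallel to that of Theorem~\ref{t:4.1.1}, using the new Lemma~\ref{l:E-n-delta} and the new decomposition (\ref{f:8.1.1})--(\ref{f:8.1.3})'' and only spell out the one new normalization computation in detail.
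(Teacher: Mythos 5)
Your proposal is correct and coincides with the paper's intended argument: the paper states Proposition 5.1.1 with only the remark ``Similar to Theorem 4.1.1,'' so mimicking that proof — replacing $\fk{C}_0$ by $\fk{C}_0$ minus the chosen homogeneous tubes, enlarging the regular part by $\oplus_{j}\mathscr{H}_{x_j}$, and reproving the analogue of Lemma 4.1.2 via Lemma 4.1.1 and Lemma 1.5.1(c) — is exactly what is meant. Your flag about the normalization factor $v^{-n\dim S_1 - n\dim S_2}$ and the precise meaning of ``$M\in\fk{C}_1\oplus\bigoplus_{j}\mathscr{H}_{x_j}$ with $\udim M=n\delta$'' inside $\fk{C}(P,L)$ is a reasonable bookkeeping caution, but it does not reflect a gap in your argument; it is an imprecision in the paper's statement of (5.1.1)--(5.1.3).
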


\begin{theorem}\label{t:8.1.2}  Let $Q$ be a affine quiver ,  let
$P_i$ (resp. $R_i,I_i$) be a pre-projective (resp. nonhomogeneous
regular, pre-injective) $\bbf_q Q-$module, and let $H_i\in
\oplus_{j=1}^{t}\mathscr{H}_{x_j}$ be a homogeneous regular $\bbf_q
Q-$module with $x_j$ being $\bbf_q-$ rational point in
$\mathbb{P}^1$ for $i=1,2,3; t\in \bbn.$ Let $X_i=P_i\oplus
R_i\oplus H_i\oplus I_i,i=1,2,3.$

Then there exists a Hall polynomial
$\varphi_{X_1X_2}^{X_3}(x)\in\bbq[x]$ such that
$$\varphi_{X_1X_2}^{X_3}(q)=g_{X_1X_2}^{X_3}.$$ \end{theorem}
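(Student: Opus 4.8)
The plan is to deduce the existence of the Hall polynomial $\varphi_{X_1X_2}^{X_3}(x)$ from the fact that the structure constants of $\ch^{r,x_1,\ldots,x_t}$ with respect to the basis $\mathbf{B}'$ of Proposition~\ref{p:8.1.1 } are given by polynomials in $v$ (equivalently in $q=v^2$), together with the classical fact that Hall polynomials already exist within each individual tube (the cyclic quiver / nilpotent situation) and within the preprojective and preinjective components, where the relevant subcategories are directing. First I would fix the decomposition $X_i = P_i\oplus R_i\oplus H_i\oplus I_i$ into preprojective, non-homogeneous regular, homogeneous regular, and preinjective parts, and use Lemma~\ref{l:2.1.1}(b) together with Lemma~\ref{l:regular-commutative} to split the computation of $g^{X_3}_{X_1X_2}$ into a product of ``local'' Hall numbers: one for the preprojective block, one for each non-homogeneous tube, one for the homogeneous part, and one for the preinjective block, up to explicit powers of $q$ coming from the Euler form. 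The point is that a submodule $W\subseteq X_3$ with $X_3/W\cong X_1$, $W\cong X_2$ must respect the $\Hom$/$\Ext$ vanishing of Lemma~\ref{l:2.1.1}, so it decomposes compatibly with these blocks; the homogeneous regular part needs the extra care that summands may lie in tubes $\mathscr H_{x_j}$ not among the distinguished ones, but since by hypothesis the $H_i$ lie in $\oplus_{j=1}^t\mathscr H_{x_j}$ this is controlled.

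Next I would invoke the known polynomiality in each block: for the preprojective and preinjective blocks the relevant module categories are representation-directed, so Hall polynomials exist by Ringel's classical argument (or by the Riedtmann–Peng formula); for the non-homogeneous tubes the category is equivalent to nilpotent representations of a cyclic quiver, where Hall polynomials exist and are well understood; for the homogeneous part one uses the generic structure of homogeneous tubes parametrised by $\mathbb P^1$, so that for fixed isomorphism types the count is polynomial in $q$ (here one must keep track that the number of $\mathbb F_q$-rational homogeneous tubes itself varies polynomially, but since the $H_i$ are supported on the fixed finite set $x_1,\ldots,x_t$ of rational points, one is really counting inside a fixed finite collection of tubes, each polynomial). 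Multiplying these together with the power of $q$ from the mixed Euler-form contributions — which is a fixed monomial determined by the dimension vectors — yields a polynomial $\varphi_{X_1X_2}^{X_3}(x)\in\bbq[x]$ with $\varphi_{X_1X_2}^{X_3}(q)=g^{X_3}_{X_1X_2}$.

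Alternatively, and perhaps more cleanly, I would argue directly inside the rational Ringel–Hall algebra: by Proposition~\ref{p:8.1.1 } the elements $\lr P*\lr M*E_{\mathbf w\dz,3}*\lr I$ form a $\bbq(v)$-basis, and the subalgebra they generate is defined over $\cz=\bbq[v,v^{-1}]$ with structure constants that are manifestly Laurent polynomials in $v$ (the composition part is by Green/Lusztig, the $E_{n\dz,3}$ are $\cz$-linear combinations of the $u_{[M]}$ for $M$ in non-homogeneous tubes by Lemma~\ref{l:E-n-delta}, and the tube factors are governed by cyclic-quiver Hall polynomials). Writing $u_{[X_1]}$ and $u_{[X_2]}$ in terms of $\mathbf B'$, forming the product, and re-expanding $u_{[X_3]}$ in $\mathbf B'$ then expresses $v^{\lr{\udim X_1,\udim X_2}}g^{X_3}_{X_1X_2}$ as a $\bbq(v)$-combination — in fact a $\cz$-combination — of the known polynomial structure constants, hence itself a Laurent polynomial in $v$; since $g^{X_3}_{X_1X_2}$ is an integer for each prime power $q$ and $v^{\lr{\udim X_1,\udim X_2}}$ is a fixed monomial, one concludes $g^{X_3}_{X_1X_2}=\varphi_{X_1X_2}^{X_3}(q)$ for a genuine polynomial in $q$. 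The main obstacle I anticipate is the bookkeeping for the homogeneous part: one must show that the change-of-basis matrix between $\{u_{[M]}\}$ and $\{E_{\mathbf w\dz,3}, \lr{M}\}$ restricted to the blocks involving $\oplus_{x_j}\mathscr H_{x_j}$ is defined over $\cz$ and, crucially, that separating a module $X_3$ into its homogeneous and non-homogeneous regular parts in the Hall product does not introduce $q$-dependence that is non-polynomial — this is exactly where Lemma~\ref{l:regular-commutative} and the $E_{n\dz,2}=\sum E_{m\dz,1}*E_{(n-m)\dz,3}$ identity of Lemma~\ref{l:E-n-delta}(b) do the essential work, and making that reduction precise for arbitrary (not necessarily semisimple) regular modules is the step that will require the most care.
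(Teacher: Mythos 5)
Your second (``alternatively, and perhaps more cleanly'') argument is essentially the paper's proof. Both work inside $\ch^{r,x_1,\dots,x_t}$, write $\lr{X_1}\ast\lr{X_2}$ relative to the basis $\mathbf B'$ of Proposition~\ref{p:8.1.1 }, extract the coefficient $a_{12}^3(v)$ of $\lr{X_3}$, and rescale by the explicit $v$-power coming from $\dim\ed(X_i)$ and the Euler form to recover $g^{X_3}_{X_1X_2}$. The one genuine difference is how the argument is closed: you want to show that $a_{12}^3(v)$ is actually a Laurent polynomial over $\cz$, which requires exactly the $\cz$-integrality bookkeeping for the homogeneous block that you flag as the hard part. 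The paper sidesteps this: it only uses that $a_{12}^3(v)\in\bbq(v)$ is independent of $q$ (the substantive input, coming from the generic description of $\mathbf B'$), then observes that the resulting expression for $g^{X_3}_{X_1X_2}$ is a rational function of $v^2$ which takes nonnegative integer values at infinitely many prime powers $q=v^2$, hence must already lie in $\bbq[v^2]$. So your route is sound but does more work than the paper needs.

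Your first argument has a genuine gap. The Hall number $g^{X_3}_{X_1X_2}$ does \emph{not} factor as a product of blockwise Hall numbers over the preprojective, regular, and preinjective parts, even up to a monomial in $q$. Lemma~\ref{l:2.1.1}(b) is only one-sided: $\Ext(\text{regular},\text{preprojective})$, $\Ext(\text{preinjective},\text{regular})$ and $\Ext(\text{preinjective},\text{preprojective})$ are all nonzero in general, so a submodule $W\subseteq X_3$ with $W\cong X_2$ and $X_3/W\cong X_1$ need not be compatible with the fixed direct-sum decomposition of $X_3$. A minimal counterexample, already on the Kronecker quiver: take $X_2$ the simple projective, $X_1$ the simple injective, and $X_3$ a regular simple module of dimension vector $\dz$. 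Then $g^{X_3}_{X_1X_2}=1$, but every one of your proposed blockwise factors $g^{P_3}_{P_1P_2}$, $g^{H_3}_{H_1H_2}$, $g^{I_3}_{I_1I_2}$ vanishes. Multiplicativity across the preprojective/regular/preinjective triangular structure is a statement about the Hall-algebra product of block basis elements (which is what $\mathbf B'$ encodes), not about the individual Hall numbers; this is precisely why a direct decomposition argument in the style you first sketch does not go through, and why the paper (like Hubery) reorganises the computation through a distinguished PBW-type basis instead.
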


\begin{proof} Since
$$\lr{P_1\oplus M_1\oplus I_1}*\lr{P_2\oplus M_2\oplus I_2}=
a_{12}^3(v)\lr{P_3\oplus M_3\oplus I_3}+\text{ other terms},$$ and
$a_{12}^3(v)\in \bbq (v),$ then we have
$$\begin{array}{l}g_{P_1\oplus R_1\oplus H_1\oplus I_1,P_2\oplus
R_2\oplus H_2\oplus I_2}^{P_3\oplus R_3\oplus
H_3\oplus I_3} \\
=v^{dim_{\bbf_q}End(P_3\oplus M_3\oplus
I_3)-dim_{\bbf_q}End(P_1\oplus M_1\oplus
I_1)-dim_{\bbf_q}End(P_2\oplus M_2\oplus I_2)-\lr{\udim P_1\oplus
R_1\oplus H_1\oplus I_1, \udim P_2\oplus R_2\oplus H_2\oplus I_2}
}a_{12}^3(v).\end{array}$$

Set

$$(*)\begin{array}{l}\varphi_{X_1X_2}^{X_3}(v^2)\\
=v^{dim_{\bbf_q}End(P_3\oplus M_3\oplus
I_3)-dim_{\bbf_q}End(P_1\oplus M_1\oplus
I_1)-dim_{\bbf_q}End(P_2\oplus M_2\oplus I_2)-\lr{\udim P_1\oplus
R_1\oplus H_1\oplus I_1, \udim P_2\oplus R_2\oplus H_2\oplus I_2}
}a_{12}^3(v).\end{array}$$

On the other hand, we know that
$$v^{dim_{\bbf_q}End(P_3\oplus M_3\oplus
I_3)-dim_{\bbf_q}End(P_1\oplus M_1\oplus
I_1)-dim_{\bbf_q}End(P_2\oplus M_2\oplus I_2)-\lr{\udim P_1\oplus
R_1\oplus H_1\oplus I_1, \udim P_2\oplus R_2\oplus H_2\oplus I_2}
}a_{12}^3(v)$$  takes the positive integer value while $v^2$ takes
infinite many positive integer values. Then
$\varphi_{X_1X_2}^{X_3}(v^2)$ is a polynomial of $v^2$ over $\bbq.$
Thus the proof is complete.\end{proof}

\bigskip

\section{Canonical bases of $\mathbb{U}^+$ ($\cong\ch^r(\llz)$)}

In this section we give the main theorem of this paper, that is, a description of the canonical basis of $\ch^r(\llz)\cong\mathbb{U}^+$.

\subsection{}
Let $Q$ be a tame quiver with vertex set $I$, and $\Lambda=\bbf_q Q$
be the path algebra of $Q$ over the finite field $\bbf_q$. We denote
by $M(x),x\in\bbe_\az,$ the $\llz$-module of dimension vector $\az$
corresponding to $x.$ For subsets $\ca\subset\bbe_{\az}$ and
$\cb\subset\bbe_{\bz},$ we define the extension set $\ca\star\cb$ of
$\ca$ by $\cb$  to be
\begin{eqnarray*} \ca\star\cb&=&\{z\in\bbe_{\az+\bz}|\ \text{there exists an exact
sequence}\ \\
&& \quad 0\ra M(x)\ra M(z)\ra M(y)\ra 0 \ \text{with}\
x\in\cb,\ y\in\ca\}.
\end{eqnarray*}
It follows from the definition that
$\ca\star\cb=p_3p_2(p_1^{-1}(\ca\times \cb))$, see
Section~\ref{ss:construction-lusztig} for the definitions of
$p_1,p_2$ and $p_3$. Because $p_1$ is a locally trivial fibration ,
we have $\overline{\ca}\star \overline{\cb} \subseteq
\overline{\ca\star\cb}$(see Lemma~2.3 in \cite{LXZ}) . In
particular, $\ol{\co}_M\star\ol{\co}_N=\ol{\co}_{M\oplus N}$ if
$\ext(M,N)=0,$ i.e. $\co_{M\oplus N}$ is open and dense  in
$\ol{\co}_M\star\ol{\co}_N.$

Set $\cdim\ca=\dim\bbe_{\az}-\dim\ca.$ We will need the following:


\begin{lemma}[\cite{Re}]\label{l:5.1.1}
Let  $\az, \bz\in\bbn I$. If $\ca\subset\bbe_{\az}$ and $\cb\subset\bbe_{\bz}$  are
irreducible algebraic varieties and are stable under the action of
$G_{\az}$ and $G_{\bz}$ respectively, then $\ca\star\cb$ is
irreducible and stable under the action of $G_{\az+\bz},$ too.
Moreover,
$$\cdim \ca\star\cb =\cdim\ca+\cdim\cb-\lr{\bz,\az}+r,$$
where $0\leq r\leq\min\{\dim_k\hom(M(y),M(x))|y\in\cb,x\in\ca\}$.
\end{lemma}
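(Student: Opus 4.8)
The plan is to reduce the statement to the case treated by Reineke and Lusztig (the quiver without oriented cycles version) and then to track how the extra ``cycle'' contributions, if any, enter. First I would recall the geometric setup from Section~\ref{ss:construction-lusztig}: by definition $\ca\star\cb=p_3p_2(p_1^{-1}(\ca\times\cb))$, and $p_1:\bbe'\to\bbe_\az\times\bbe_\bz$ is a locally trivial fibration with smooth connected fibers (the fiber over $(x',x'')$ records the choices of $R'$ and $R''$, which form a torsor under a product of general linear groups), while $p_2:\bbe'\to\bbe''$ is a principal $G_\bz\times G_\az$-bundle. Since $\ca$ and $\cb$ are irreducible and $G$-stable, $p_1^{-1}(\ca\times\cb)$ is irreducible, hence so is its image $\ca\star\cb$; the $G_{\az+\bz}$-stability is immediate because the whole construction is $G_{\az+\bz}$-equivariant and the defining condition on $z$ (existence of a submodule isomorphic to something in $\cb$ with quotient in $\ca$, after identifying orbits) is orbit-invariant. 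This disposes of the qualitative half of the statement.

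For the dimension formula, the main step is a careful fiber-dimension count along the chain $p_1^{-1}(\ca\times\cb)\xrightarrow{p_2}\bbe''\xrightarrow{p_3}\ca\star\cb$. The fibers of $p_1$ over a point of $\ca\times\cb$ have constant dimension $\dim G_\bz+\dim G_\az=\sum_i(b_i^2+a_i^2)$ (they are open subsets of $\hom(k^\bz,W)\times\hom(k^\az,k^\gz/W)$), so
$$\dim p_1^{-1}(\ca\times\cb)=\dim\ca+\dim\cb+\sum_i(a_i^2+b_i^2).$$
Passing through $p_2$, which contracts these same group orbits, gives $\dim p_2p_1^{-1}(\ca\times\cb)=\dim\ca+\dim\cb$. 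The only nonconstant fiber dimension is that of $p_3$: over $z\in\ca\star\cb$, the fiber ${\bf Z}\cap p_3^{-1}(\co_z)$ parametrizes the $x$-stable subspaces $W\subset V_\gz$ of dimension vector $\bz$ such that $M(W)\in\ol\cb$ and $M(V_\gz/W)\in\ol\ca$; for a \emph{generic} $z$ the dimension of this space of ``compatible flags'' is governed by the usual exact sequence computing $\ext$, and equals $\dim_k\Hom(M(y),M(x))$ minus the ``expected'' correction, which is exactly the source of the term $r$. Concretely, one uses that the space of $x$-stable $W$ with prescribed quotient and sub dimension vectors, through a given extension class, has dimension $\lr{\bz,\az}-\dim\Hom+\dim\Hom(M(y),M(x))$-type formula; combining this with $\cdim\ca\star\cb=\dim\bbe_{\az+\bz}-\dim(\ca\star\cb)$ and $\dim\bbe_{\az+\bz}-\dim\bbe_\az-\dim\bbe_\bz=\sum_{\rho\in H}a_{s(\rho)}b_{t(\rho)}+\sum_{\rho\in H}a_{t(\rho)}b_{s(\rho)}$ and unwinding the definition of $\lr{-,-}$ yields the claimed identity. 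The bound $0\le r\le\min\{\dim_k\Hom(M(y),M(x))\mid y\in\cb,x\in\ca\}$ comes out of the fact that a generic fiber of $p_3$ cannot be larger than the Hom-space controlling the trivial extension, and is nonnegative because it is a fiber dimension; a lower-semicontinuity argument on $z$ picks out the generic (minimal) value and is what produces the ``$\min$''.

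I expect the main obstacle to be the honest computation of the generic fiber dimension of $p_3$ over $\ca\star\cb$ — i.e. showing that, after fixing a generic extension $0\to M(x)\to M(z)\to M(y)\to 0$ with $x\in\cb$, $y\in\ca$, the variety of submodules $W\subseteq M(z)$ isomorphic to some object of $\ol\cb$ with quotient in $\ol\ca$ has dimension exactly $\lr{\bz,\az}$ shifted by the Hom-correction, rather than merely being bounded by it. This is precisely where the parameter $r$ is forced upon us and cannot in general be pinned down, and where one must be careful that ``generic in $\ca\star\cb$'' is compatible with ``generic extension class'': one argues that the generic point of $\ca\star\cb$ lies in the image of the open locus of $p_1^{-1}(\ca\times\cb)$ where $\Hom$ (hence $\Ext$) is minimal, invokes upper-semicontinuity of fiber dimension for $p_3$, and then reads off $r$ as the resulting generic correction. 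Since this is exactly Lemma~2.3/2.4-type bookkeeping already carried out in \cite{Re} (and recalled in \cite{LXZ}), the cleanest route is to cite \cite{Re} for the formula in the oriented-cycle-free situation and note that, because $\bbe_V$ here is a closed (nilpotent-representation) subvariety stable under $G_V$ and all the morphisms $p_1,p_2,p_3$ restrict compatibly, the same count goes through verbatim, with the codimensions and the Euler form computed inside the subvariety; no new phenomenon appears.
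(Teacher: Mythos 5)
The paper does not actually prove Lemma~\ref{l:5.1.1}; it is stated with the citation \cite{Re} and no argument is given, so there is no ``paper's own proof'' to compare against. Your sketch fills a gap the paper deliberately leaves to the reference, and the high-level strategy --- compute dimensions along the chain $p_1^{-1}(\ca\times\cb)\to\bbe''\to\ca\star\cb$, isolate the generic fiber dimension of $p_3$ as the source of the correction $r$, and invoke irreducibility of the image of an irreducible variety together with $G_{\az+\bz}$-equivariance --- is the right one and is indeed what Reineke does. The irreducibility and $G$-stability part of your argument is clean and complete.

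However, the dimension bookkeeping in the middle of your proposal is inaccurate as written, and a reader trying to carry it through verbatim would get stuck. The fiber of $p_1$ over a fixed $(x',x'')\in\bbe_\az\times\bbe_\bz$ is not merely a torsor under $G_\az\times G_\bz$ of dimension $\sum_i(a_i^2+b_i^2)$: it also contains the choice of the $I$-graded subspace $W\subset V_\gz$ of dimension vector $\bz$ (contributing $\sum_i a_ib_i$) and the free off-diagonal block of $x$ in $\bigoplus_\rho\hom(V_{s(\rho)}/W_{s(\rho)},W_{t(\rho)})$ (contributing $\sum_\rho a_{s(\rho)}b_{t(\rho)}$). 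You only re-introduce these terms later when converting $\dim\bbe_{\az+\bz}-\dim\bbe_\az-\dim\bbe_\bz$, so the intermediate equalities such as $\dim p_2p_1^{-1}(\ca\times\cb)=\dim\ca+\dim\cb$ are false as stated, even though the final $\lr{\bz,\az}+r$ formula can be recovered once the missing contributions are inserted consistently. Since you ultimately appeal to \cite{Re} for the precise count --- exactly as the paper does --- this does not invalidate the proposal, but the intermediate formulas should either be corrected or explicitly flagged as schematic.
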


Recall that for $x\in \bbe_\az$ the symbol $\co_x$ (or $\co_{M(x)}$) denotes the $G_\az$-orbits of $x$.
We now introduce two orders in $\llz-$mod as follows:
\begin{itemize}
\item $N\leq_{deg} M$  if $\co_N\subseteq\overline{\co}_M$.
\item $N\leq_{ext} M$  if there exist $M_i,U_i,V_i$ and short exact sequence
\[0\lra U_i\lra M_i\lra V_i\lra 0\]
such that $M=M_1,M_{i+1}=U_i\oplus V_i, 1\leqslant
i\leqslant p$, and $N=M_{p+1}$ for $p\in\mathbb{N}$.
\end{itemize}
%

For any two modules $M$ and $N$. The module $L$ is called the generic extension of $M$ by $N$ if the orbit $\co_L$ is open and dense in the irreducible variety $\co_M*\co_N,$ and is denoted by $L=M\diamond N.$
Thus $\co_{L'}\subseteq\overline{\co}_L$ for any extension $L'$ of $M$ by $N,$ i.e., $L'\leq_{deg}L.$

\begin{proposition}\label{p:5.1.2}\cite{Z}
 The orders $\leq_{deg},\leq_{ext}$ are equivalent in $\llz-mod.$
\end{proposition}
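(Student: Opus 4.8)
The plan is to prove both implications $N \leq_{ext} M \Rightarrow N \leq_{deg} M$ and $N \leq_{deg} M \Rightarrow N \leq_{ext} M$ separately, as this is the standard route for results of this type (due to Bongartz, Zwara, and Reineke in various settings).

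\textbf{The easy direction: $N \leq_{ext} M \Rightarrow N \leq_{deg} M$.} First I would observe that it suffices to treat a single step of the chain, i.e., to show that if $0 \to U \to L \to V \to 0$ is a short exact sequence then $U \oplus V \leq_{deg} L$; the general case follows by transitivity of $\leq_{deg}$, which holds because $\co_N \subseteq \overline{\co}_M$ defines a partial order (orbit closures are closed). For the single step, the key fact is that $\co_{U\oplus V}$ lies in the closure of the family of all extensions of $V$ by $U$, hence in $\overline{\co}_L$; concretely, one realises $U \oplus V$ as a degeneration of $L$ by scaling the extension cocycle, or one invokes the extension-set formalism already set up in the excerpt: $\overline{\co}_U \star \overline{\co}_V \subseteq \overline{\co_U \star \co_V}$, and $\co_L \subseteq \co_U \star \co_V$ while $\co_{U\oplus V}$ is contained in (indeed generically equal to a piece of) $\overline{\co}_U \star \overline{\co}_V$. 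In fact the discussion just before the statement already records that $\co_{U \oplus V} \subseteq \overline{\co}_U \star \overline{\co}_V$ since $U\oplus V$ is itself an extension of $V$ by $U$ (the split one), giving $U \oplus V \leq_{deg} L$ directly.

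\textbf{The hard direction: $N \leq_{deg} M \Rightarrow N \leq_{ext} M$.} This is the substantive content and the main obstacle. The plan is to argue by induction on $\dim \Hom_\llz(-, M) - \dim\Hom_\llz(-,N)$ or on $\dim M$, reducing a general degeneration to a chain of elementary ``one short exact sequence'' degenerations. The core input is Zwara's characterisation (the cited \cite{Z}): a degeneration $N \leq_{deg} M$ with $N \not\cong M$ admits a short exact sequence $0 \to N \to M \oplus Z \to Z \to 0$ for some module $Z$ (equivalently, by a theorem of Riedtmann--Zwara, $0 \to Z \to Z \oplus M \to N \to 0$). I would then show such a sequence, together with the already-established easy direction and a cancellation/bookkeeping argument on the summand $Z$, produces a strictly shorter degeneration to which the inductive hypothesis applies, assembling the $\leq_{ext}$-chain step by step. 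The delicate point is handling the auxiliary module $Z$: one must ensure that after performing the elementary operation the ``complexity measure'' strictly decreases, so that the induction terminates; this is exactly where Zwara's results on the combinatorics of degenerations (and the finiteness coming from $\mod\llz$ being a finite-dimensional algebra, so that degeneration chains have bounded length) are essential.

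\textbf{Assembly.} Finally I would note that since $\leq_{ext}$ is by construction transitive and $\leq_{deg}$ has been shown transitive, and we have both containments $\leq_{ext}\,\subseteq\,\leq_{deg}$ and $\leq_{deg}\,\subseteq\,\leq_{ext}$ on objects of $\mod\llz$, the two preorders coincide, which is the assertion. Most of the weight of the argument is carried by the cited theorem of Zwara; the role of this proposition in the paper is to allow free passage between the geometric condition $\co_N \subseteq \overline{\co}_M$ (used in the purity/perverse sheaf arguments of later sections) and the homological/combinatorial condition phrased via iterated short exact sequences (used when manipulating Hall-algebra identities), so I would keep the proof short and reference-driven rather than reproving Zwara's theorem.
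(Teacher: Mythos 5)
The paper gives no argument for Proposition~\ref{p:5.1.2} at all; it is stated as a citation to~\cite{Z}. So strictly speaking there is no ``paper's own proof'' to compare against, and your decision to keep the proof short and reference-driven mirrors the paper's choice. Your outline of the easy direction $\leq_{ext}\Rightarrow\leq_{deg}$ is correct and is the standard argument (a single step $U\oplus V\leq_{deg} L$ plus transitivity of closure containment; the split extension already sits in $\overline{\co}_U\star\overline{\co}_V\subseteq\overline{\co}_L$).

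However, there is a genuine gap in your treatment of the hard direction, and it is worth flagging because the paper's own citation is imprecise in exactly the same place. Zwara's theorem from the cited paper only gives: $N\leq_{deg}M$ if and only if there exists a module $Z$ and a short exact sequence $0\to Z\to Z\oplus M\to N\to 0$. Reading that sequence as a single $\leq_{ext}$-step yields $Z\oplus N\leq_{ext}Z\oplus M$, not $N\leq_{ext}M$. The passage from $Z\oplus N\leq_{ext}Z\oplus M$ to $N\leq_{ext}M$ is the cancellation property, and this is \emph{not} a ``bookkeeping argument'' that follows from Zwara's combinatorics and finite dimensionality, as your sketch suggests. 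In fact $\leq_{deg}$ and $\leq_{ext}$ are known to differ for general finite-dimensional algebras — cancellation genuinely fails there — so no amount of general-purpose induction on chain lengths can close the gap without using the specific structure of $\mod\llz$. The equivalence $\leq_{deg}=\leq_{ext}$ for representation-finite algebras and for tame concealed algebras (hence for the path algebras of extended Dynkin quivers used in this paper) is a theorem of Bongartz, ``On degenerations and extensions of finite-dimensional modules,'' Adv.\ Math.\ \textbf{121} (1996), which relies on Auslander--Reiten theory and a careful analysis of the module category rather than on a formal induction. Your proof plan would not terminate as written; the claim that ``the `complexity measure' strictly decreases'' is precisely what needs (and, outside the tame concealed/representation-finite world, fails to have) a proof. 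To repair the sketch, replace the induction-on-$Z$ step by an explicit appeal to Bongartz's cancellation theorem for tame hereditary algebras, and note that the paper's reference should really be a pair: Zwara for the exact-sequence characterization of $\leq_{deg}$, and Bongartz for the cancellation needed to reach $\leq_{ext}$.
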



 Let $X$ be a variety of pure dimension $n$ over $k$ and let $l$ be a prime $\neq p.$ We denote by $\cp_X$ the category of perverse sheaves on an
algebraic variety $X.$  Let $f$ be a locally closed embedding from
$X$ to algebraic variety $Y$. One has the intermediate extension
functor
$$f_{!*}:\cp_X\lra \cp_Y, P\longmapsto \Im\{ ^p\mathcal{H}^0(f_!P)\lra {^p\mathcal{H}}^0(f_*P)\}.$$

Let $V$ be a locally closed, smooth, irreducible subvariety of $X$,
of dimension $d$ and let $\mathcal{L}$ be an irreducible
$\mathbb{Q}_l-$ local system on $V.$ Then $\mathcal{L}[d]$ is an
irreducible perverse sheaf on $V$ and there is a unique irreducible
perverse sheaf $\widetilde{\mathcal{L}}[d],$ whose restriction to
$V$ is $\mathcal{L}[d]$, we have
$\widetilde{\mathcal{L}}[d]=IC_{\overline{V}}(\mathcal{L}),$ where
$IC_{\overline{V}}(\mathcal{L})$ is the intersection cohomolgy
complex of Deligne -Goresky-Macpherson of $\overline{V}$ with
coefficients in $\mathcal{L}.$ The extension of
$\widetilde{\mathcal{L}}[d]$ to $X$ (by $0$ outside $\overline{V}$)
is an irreducible perverse sheaf on $X.$

 In particular, suppose $\mathcal{L}$ is a local system on a
nonsingular Zariski dense open subset $j:U\lra Y$ of an irreducible
algebraic variety $Y$ defined over $\bbf_p,$ and let $IC(Y,\mathcal{L}):=j_{!*}\mathcal{L}.$
We shall say that $Y$ is pure (resp. very pure) if for any $y\in Y(\bbf_{p^r})$
and for any $i,$ all eigenvalues of $(Fr^*)^r$ on $\ch_y^i(IC(Y,\overline{\bbq}_l))$ have complex absolute value $\leq p^{ir/2}$ (resp. $= p^{ir/2}$).

\begin{lemma} \cite{D} If $X$ is projective and pure, then all the eigenvalues of $Fr^*$ on $IH^i(X)$ have complex absolute value $ p^{i/2}$.
\end{lemma}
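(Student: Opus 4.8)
The plan is to deduce this from Deligne's theorem on the behaviour of weights under proper pushforward, i.e.\ from the main result of Weil~II (the cited \cite{D}). Write $IC_X := IC(X,\overline{\bbq}_l)$ for the intersection complex, taken with the same normalization as in the definition of purity given above, so that the bound concerns $\ch^i_y(IC_X)$. The first step is to observe that the hypothesis that $X$ is \emph{pure} says exactly that $IC_X$, regarded as a mixed complex, has weights $\leq 0$: for every $y\in X(\bbf_{p^r})$ and every $i$, the eigenvalues of $(Fr^*)^r$ on $\ch^i_y(IC_X)$ have complex absolute value $\leq p^{ir/2}$.

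Next I would invoke Poincar\'e--Verdier duality. The intersection complex is Verdier self-dual up to a Tate twist, $\mathbb{D}_X IC_X\cong IC_X(\dim X)[2\dim X]$ (again up to the relevant normalization), and Verdier duality interchanges ``weights $\leq w$'' with ``weights $\geq -w$''. Combining the bound coming from purity with the dual bound obtained by applying $\mathbb{D}_X$ forces $IC_X$ to be \emph{pure of weight $0$} as a complex. Now let $f\colon X\to\operatorname{Spec}\bbf_p$ be the structure morphism; since $X$ is projective, $f$ is proper, hence $Rf_!\cong Rf_*$. By Deligne's theorem, $Rf_!$ preserves the property ``weights $\leq w$'' and, dually, $Rf_*$ preserves ``weights $\geq w$''; applying both to the pure complex $IC_X$ shows that $Rf_*IC_X$ is pure of weight $0$. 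Its $i$-th cohomology is $IH^i(X)=\mathbb{H}^i(X,IC_X)$, which is therefore pure of weight $i$, i.e.\ all eigenvalues of $Fr^*$ on $IH^i(X)$ have complex absolute value $p^{i/2}$, as claimed.

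The statement is thus a formal consequence of the cited theorem once the purity of the \emph{complex} $IC_X$ (and not merely of its stalks) has been established, and the only point I expect to require genuine care is the bookkeeping of conventions: matching the stalkwise estimate ``$\leq p^{ir/2}$ on $\ch^i_y$'' appearing in the definition of purity with the weight-filtration conventions of Weil~II, and keeping track of the Tate twist in the self-duality of $IC_X$ so that ``pure on stalks plus Verdier self-dual'' becomes precisely ``pure of weight $0$''. (If $X$ is reducible one runs the argument on each irreducible component, equivalently on the direct sum of the corresponding intersection complexes.) Once this normalization issue is pinned down, the argument is immediate.
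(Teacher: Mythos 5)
Your proposal is correct and is exactly the standard deduction that the paper is implicitly invoking by citing Deligne: stalk purity of $IC_X$ gives weights $\leq 0$, Verdier self-duality of the intersection complex upgrades this to purity of weight $0$ as a complex, and then Deligne's Weil~II theorem on $Rf_!$ (together with $Rf_!\cong Rf_*$ for $f$ proper, which is where projectivity enters) shows that the hypercohomology $IH^i(X)$ is pure of weight $i$. The paper gives no proof of this lemma — it is stated as a citation to \cite{D} — so there is nothing to compare beyond noting that your reconstruction is the intended one; your remark that $X$ may be assumed irreducible is also consistent with the paper's definition of purity, which is phrased for irreducible varieties only.
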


\begin{lemma} \cite{KL} Let $Y$ be an irreducible closed $\bbf_p-$ subvariety of $k^{\bbn}$ invariant under $\bbc_m-$ action defined by $\lambda (z_1,z_2,...,z_n)=(\lambda^{a_1}z_1,...,\lambda^{a_n}z_n),$ where $a_1>0,...,a_n>0$
Then

(1) $IH^i(Y)=\ch_0^i(IC(Y,\overline{\bbq}_l)),$ for all $i$, where $0$ is the origin of $k^{\bbn}$.

(2) If $Y-0$ is very pure, then $Y$ is very pure.
\end{lemma}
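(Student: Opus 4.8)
The plan is to exploit the fact that, since all the weights $a_i$ are positive, the given action is \emph{contracting}: the action morphism $\mathbb{G}_m\times k^n\to k^n$ extends to a morphism $a\colon\bba^1\times k^n\to k^n$ with $a(0,z)=0$ for every $z$, and because $Y$ is closed and stable under the action this restricts to $a\colon\bba^1\times Y\to Y$. In particular $0\in Y$ and every point of $Y$ flows into the origin. Both parts of the lemma are then instances of standard facts about contracting torus actions, which I would organise as follows.

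\emph{Part (1).} First I would note that $IC(Y,\overline{\bbq}_l)$ is canonically $\mathbb{G}_m$-equivariant since $Y$ is $\mathbb{G}_m$-stable, and then invoke the contraction principle: for any $\mathbb{G}_m$-equivariant complex $\cf$ on $Y$ the restriction map $\mathbb{H}^i(Y,\cf)\to\ch^i_0(\cf)$ is an isomorphism for all $i$. The argument for this principle — which I would either cite or compress into a line or two — is to form $a^{*}\cf$ on $\bba^1\times Y$, push it forward along the projection $\bba^1\times Y\to\bba^1$, observe (using equivariance on $\mathbb{G}_m\times Y$ and that the pushforward is monodromic on $\bba^1$) that the result is a constant complex on $\bba^1$, and compare its fibres at $1$ and at $0$, which compute $\mathbb{H}^\bullet(Y,\cf)$ and $\ch^\bullet_0(\cf)$ respectively. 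Applying this to $\cf=IC(Y,\overline{\bbq}_l)$ yields $IH^i(Y)=\ch^i_0(IC(Y,\overline{\bbq}_l))$, which is (1).

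\emph{Part (2).} The idea is to realise $Y$ as the affine cone over a projective variety and transport purity to the vertex. After replacing the $a_i$ by $a_i/\gcd_j(a_j)$, which does not change the orbits, I would form the geometric quotient $\overline Y=(Y\setminus 0)/\mathbb{G}_m$, a closed subvariety of the weighted projective space $\mathbb{P}(a_1,\dots,a_n)$ — hence projective — and the quotient map $f\colon Y\setminus 0\to\overline Y$, which is a Zariski locally trivial $\mathbb{G}_m$-bundle over the smooth locus of $\overline Y$; the finite stabilisers at the mild quotient singularities of $\mathbb{P}(a_1,\dots,a_n)$ do no harm with $\overline{\bbq}_l$-coefficients, so that $IC(Y\setminus 0,\overline{\bbq}_l)=f^{*}IC(\overline Y,\overline{\bbq}_l)$ and $Rf_*\overline{\bbq}_l=\overline{\bbq}_l\oplus\overline{\bbq}_l[-1](-1)$ still hold. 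Since $f^{*}$ preserves stalks, the hypothesis that $Y\setminus 0$ is very pure forces $\overline Y$ to be pure, and then by the lemma of Deligne recalled above ($\overline Y$ being projective and pure) every eigenvalue of $Fr^{*}$ on $IH^i(\overline Y)$ has complex absolute value $p^{i/2}$.

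It remains to feed this back to the origin. By Part (1), $\ch^i_0(IC(Y,\overline{\bbq}_l))=IH^i(Y)=\mathbb{H}^i(Y,IC(Y,\overline{\bbq}_l))$. Applying $\mathbb{H}^\bullet(Y,-)$ to the recollement triangle $i_{0*}i_0^{!}IC(Y)\to IC(Y)\to Rj_{*}\bigl(IC(Y)|_{Y\setminus 0}\bigr)$ for the open embedding $j\colon Y\setminus 0\hookrightarrow Y$, and combining this with the Gysin sequence of the $\mathbb{G}_m$-bundle $f$ and hard Lefschetz for $IH^\bullet(\overline Y)$ (legitimate since $\overline Y$ is projective), one obtains the classical cone formula: $\ch^i_0(IC(Y,\overline{\bbq}_l))$ is a quotient of $IH^i(\overline Y)$ for $i<\dim Y$ and vanishes for $i\geq\dim Y$. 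Hence every eigenvalue of $Fr^{*}$ on $\ch^i_0(IC(Y,\overline{\bbq}_l))$ has absolute value $p^{i/2}$, i.e.\ the stalk at the origin satisfies the very-purity condition; together with the hypothesis on $Y\setminus 0$ this gives that $Y$ is very pure. The two steps I expect to require the most care are the contraction principle in positive characteristic — keeping straight equivariant versus merely monodromic complexes, and the $\ell$-adic meaning of a ``constant complex on $\bba^1$'' — and the shift/Tate-twist bookkeeping in the cone formula in the presence of the quotient singularities of $\mathbb{P}(a_1,\dots,a_n)$; both are classical, cf.~\cite{KL}.
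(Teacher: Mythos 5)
The paper gives no proof of this lemma; it is stated with the citation \cite{KL} and used as a black box (it is invoked only once, in the proof of Lemma~8.1.2, to upgrade purity to very-purity). So there is no in-paper argument to compare against. What you have written is a self-contained reconstruction of the standard Kazhdan--Lusztig argument, and it is essentially correct. Part~(1) is the contraction lemma for a $\mathbb{G}_m$-action with strictly positive weights (equivalently, the fact that the attracting set of the fixed point $0$ is all of $Y$), and your sketch via $a^{*}\cf$ pushed to $\bba^1$ is the right mechanism. Part~(2) is the affine-cone argument: pass to the weighted-projective quotient $\overline Y$, observe that very-purity of $Y\setminus 0$ descends to $\overline Y$ because $f$ is smooth and the finite stabilisers are invisible with $\overline{\bbq}_l$-coefficients, apply Deligne's theorem on $IH^{*}$ of a pure projective variety, and transfer to the stalk at $0$ via Part~(1) together with the truncation (cone) formula. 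That is exactly the chain of ideas in \cite{KL}.

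Two small remarks on the places you yourself flag as delicate. First, for the quotient map $f\colon Y\setminus 0\to\overline Y$ you want the relation $IC(Y\setminus 0)\cong f^{*}IC(\overline Y)[1]$ (the shift by $1=\dim\mathbb{G}_m$), not an unshifted identification; keeping this shift explicit is what makes the comparison of eigenvalue exponents ``$p^{i/2}$ on the $i$-th sheaf'' come out right after applying the cone formula. Second, the cone formula identifies $\ch^i_0(IC(Y))$ with $IH^{i}(\overline Y)$ in low degrees and $0$ in high degrees (the cutoff coming from the support condition for middle perversity); whether one prefers to phrase the low-degree piece as all of $IH^{i}(\overline Y)$ or, via hard Lefschetz, as its primitive part is immaterial for the purity conclusion, since in either case it is a sub or quotient of something on which $\mathrm{Fr}^{*}$ already acts with eigenvalues of the right absolute value. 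Neither point is a gap; your proof is sound and matches the cited source.
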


\begin{definition}\label{d:5.1.3} Let $IC(Y,\mathcal{L})$ as above. and let $q=p^r.$ Then $IC(Y,\mathcal{L})$ is said to \emph{strong pure} if all eigenvalues of $(Fr^r)^*$ on the stalks at $x$ of $"i-"\text{th}$ cohomology sheaf of
$IC(Y,\mathcal{L})$ are equal to $p^{ir/2}$, for any $x\in X(\bbf_{p^r})$.
\end{definition}

 \begin{lemma}\label{l:5.1.4}  Let $Y$ be an irreducible algebraic variety of dimension $m$, and let $p:X\rightarrow Y$ be a smooth morphism of relative dimension $d$. Suppose $U_0$  is a nonsingular
Zariski dense open subset of $Y,$ $j_0:U_0\rightarrow Y$ is an open embedding. Then we have the following
cartesian square
\[
 \xymatrix{V=p^{-1}(U_0)\ar[d]^{p|_U}\ar[r]^(0.65)j &  X\ar[d]^p\\
 U_0 \ar[r]^{j_0} &Y.}
\]
If ${(j_0)}_{!*}\overline{\bbq}_l$ is strong pure, then $j_{
!*}\overline{\bbq}_l[d]$ is strong pure.
\end{lemma}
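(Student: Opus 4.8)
The plan is to reduce the statement to the compatibility of the intermediate extension with pullback along the smooth morphism $p$, and then to observe that strong purity — being a condition on the eigenvalues of Frobenius on the stalk cohomology sheaves — is inherited under such a pullback, since the stalk of a pulled--back complex at a point is canonically, and $Fr$--equivariantly, the stalk of the original complex at the image point. We may and do assume $p$ surjective with connected fibres (as in the applications below); then $V=p^{-1}(U_{0})$ is a nonsingular dense open subvariety of $X$, so $j_{!*}\overline{\bbq}_l[d]$ is the intermediate extension along $j$ of the constant perverse sheaf $\overline{\bbq}_l[\dim V]$ on $V$.

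First I would invoke two standard facts about perverse sheaves. Since $p$ is smooth of relative dimension $d$, the shifted pullback $p^{*}[d]\colon D^{b}_{c}(Y)\to D^{b}_{c}(X)$ is $t$--exact for the perverse $t$--structures (equivalently $p^{*}[d]\cong p^{!}[-d]$); in particular it commutes with the functors ${}^{p}\mathcal{H}^{0}$ and with the formation of images of maps of perverse sheaves. Moreover, base change along the cartesian square gives $p^{*}j_{0,!}\cong j_{!}\,(p|_{U})^{*}$, and — since $p$ is smooth — smooth base change gives $p^{*}j_{0,*}\cong j_{*}\,(p|_{U})^{*}$. Combining these with $j_{0,!*}A=\Im\bigl({}^{p}\mathcal{H}^{0}(j_{0,!}A)\to{}^{p}\mathcal{H}^{0}(j_{0,*}A)\bigr)$ yields, for every perverse sheaf $A$ on $U_{0}$,
$$p^{*}\bigl(j_{0,!*}A\bigr)[d]\;\cong\;j_{!*}\bigl((p|_{U})^{*}A[d]\bigr).$$
Taking $A=\overline{\bbq}_l[\dim U_{0}]$ and using $(p|_{U})^{*}\overline{\bbq}_l=\overline{\bbq}_l$ together with $\dim V=\dim U_{0}+d$, the right--hand side is precisely $j_{!*}\overline{\bbq}_l[d]$, so that $j_{!*}\overline{\bbq}_l[d]\cong p^{*}\bigl((j_{0})_{!*}\overline{\bbq}_l\bigr)[d]$.

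Finally I would transport strong purity. Given $x\in X(\bbf_{q})$, put $y=p(x)\in Y(\bbf_{q})$; for any $K\in D^{b}_{c}(Y)$ there is a canonical $Fr^{*}$--equivariant isomorphism $(p^{*}K)_{x}\cong K_{y}$, hence $(p^{*}K[d])_{x}\cong K_{y}[d]$. Applying this to $K=(j_{0})_{!*}\overline{\bbq}_l$, the stalk cohomology of $j_{!*}\overline{\bbq}_l[d]$ at $x$ is, after the degree shift carried by the $[d]$, the stalk cohomology of $(j_{0})_{!*}\overline{\bbq}_l=IC(Y,\overline{\bbq}_l)$ at $y$, with the same $(Fr^{r})^{*}$--eigenvalues; since the latter are the prescribed powers of $p$ of Definition~\ref{d:5.1.3} by hypothesis, so are the former, i.e. $j_{!*}\overline{\bbq}_l[d]$ is strong pure (the same argument over all $x\in X(\bbf_{q^{s}})$, $s\geq 1$, finishes it). The only non--formal ingredient is the commutation of $j_{!*}$ with the smooth shifted pullback used in the second paragraph — the $t$--exactness of $p^{*}[d]$ together with the base--change isomorphisms for $j_{!}$ and $j_{*}$ — and that is where I expect the essential work to lie; everything afterwards is bookkeeping, the one point of care being that the shift $[d]$ (equivalently $\dim X=\dim Y+d$) must be carried along so that the conclusion matches Definition~\ref{d:5.1.3} on the nose.
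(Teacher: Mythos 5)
Your proposal is correct and takes essentially the same approach as the paper: use the $t$-exactness of $p^{*}[d]$ for a smooth morphism of relative dimension $d$, together with (smooth) base change for $j_{!}$ and $j_{*}$, to show that the shifted smooth pullback commutes with the intermediate extension, and then transport the Frobenius eigenvalue condition stalkwise. The only difference is presentational — you make the base-change isomorphisms and the Frobenius-equivariance of the stalk identification explicit, whereas the paper's proof leaves them implicit.
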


\begin{proof} By the definition of $j_{*}$ and $j_!,$ we have a
natural morphism
$$\varphi_Y:~~^p\mathcal{H}^0(j_{0!}\overline{\bbq}_l[m])\lra
{^p\mathcal{H}}^0(j_{0*}\overline{\bbq}_l[m]).$$ It induces an
intermediate extension functor
$$ j_{0!*}: \cp_U\lra\cp_Y$$
such that $j_{0!*}\overline{\bbq}_l[m]=\Im\{
^p\mathcal{H}^0(j_{0!}\overline{\bbq}_l[m])\lra
{^p\mathcal{H}}^0(j_{0*}\overline{\bbq}_l[m])\}. $ Furthermore,
$$p^*[d]\circ\varphi_Y: p^*[d](^p\mathcal{H}^0(j_{0!}\overline{\bbq}_l[m]))
\lra p^*[d](^p\mathcal{H}^0(j_{0*}\overline{\bbq}_l[m])).$$
 Since $p: X\lra Y$ is smooth of relative dimension
$d,$ $p^*[d]=p^{!}[-d]$ is $t$-exact , we have
$$\begin{array}{l}
\varphi_X: ~~^p\mathcal{H}^0(p^*j_{0!}\overline{\bbq}_l[m+d])\lra
{^p\mathcal{H}}^0(p^*j_{0*}\overline{\bbq}_l[m+d]),\\[3mm]
\varphi_X: ~~^p\mathcal{H}^0(j_{!}\overline{\bbq}_l[m+d])\lra
{^p\mathcal{H}}^0(j_{*}\overline{\bbq}_l[m+d]).
\end{array}$$

So $p^*[d](j_{0 !*}\overline{\bbq}_l[m])=j_{
!*}\overline{\bbq}_l[d+m].$ By the same argument, we get
$p^*[d]\circ F=F\circ p^*[d].$ Since $j_{0 !*}\overline{\bbq}_l$
has the purity property,  the statement of the lemma is true.
\end{proof}

 Let $\cn_{w_i}$ and $\cn_{w_i,3}$ be respectively the union
of orbits of regular modules of $\fk{C}(P,L)$ and $\fk{C}_0(P,L)$
with dimension vector $w_i\dz.$ Set $\cn_{{\bf
w}}=\cn_{w_1}\star\cdots\star\cn_{w_t}$ and $\cn_{{\bf
w},3}=\cn_{w_1,3}\star\cdots\star\cn_{w_t,3}.$
For any $P\in\cp_{prep}, M\in\bigoplus_{i=1}^l\ct_i,
I\in\cp_{prei},\pi_i\in\Pi_i^a,1\leqslant i\leqslant l,{\bf
w}\in{\bf P}(n), n\in\bbn,$ we define the varieties
\begin{eqnarray*}
\co_{P,\pi_1,\ldots,\pi_l,{\bf w},I}&=&\co_P\star\co_{\pi_1}\star\cdots\star\co_{\pi_l}
\star\cn_{{\bf w}}\star\co_I ,\\
\co_{P,M,{\bf
w},I}&=&\co_P\star\co_M\star\cn_{{\bf w},3}\star\co_I.\end{eqnarray*}

According to \cite{L4}, \cite{L5} and \cite{LXZ}, we know that
$IC({\overline{\co}_{P,\pi_1,\ldots,\pi_l,{\bf
w},I}},\overline{\bbq}_l)$ has the purity property. In order to
construct the canonical basis of $\ch^s(\llz)$
($\cong\mathbb{U}^+$), we need to study the purity property of
$\overline{\co}_{P,M,{\bf w},I}.$

\begin{theorem}\label{t:5.1.5} Let $Q$ be an affine quiver, and
let $X=\overline{\co}_{P,M,{\bf
w},I}$, for $P\in\cp_{prep}, M\in\bigoplus_{i=1}^l\ct_i,
I\in\cp_{prei}, {\bf w}\in {\bf P}(n), n\in\bbn.$ Then
$IC(X,\overline{\bbq}_l)$ are strong pure.
\end{theorem}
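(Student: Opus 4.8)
The plan is to reduce the purity statement for $X=\overline{\co}_{P,M,{\bf w},I}$ to the already-known purity of the orbit closures $\overline{\co}_{P,\pi_1,\ldots,\pi_l,{\bf w},I}$ appearing in the composition algebra (which are pure by \cite{L4,L5,LXZ}), together with the structure of the homogeneous regular part. First I would decompose the variety: using the fact that $\co_P\star(-)$ and $(-)\star\co_I$ are (up to the Hom-defect $r$ in Lemma~\ref{l:5.1.1}) iterated fibrations with $\Ext$-vanishing between preprojective, regular and preinjective parts (Lemma~\ref{l:2.1.1}(b)), it suffices to treat the ``middle'' piece $\overline{\co}_M\star\cn_{{\bf w},3}$, where $M\in\bigoplus_{i=1}^l\ct_i$ lies in the non-homogeneous tubes and $\cn_{{\bf w},3}$ is built from the \emph{homogeneous} tubes. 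The key structural input is that $\cn_{{\bf w},3}$, being supported on homogeneous tubes, is essentially the image of a variety of representations of the Kronecker algebra $K$ (via the embedding $\fk C(P,L)\hookrightarrow\mod\llz$), so its closure is, up to a smooth fibration coming from the group action, governed by nilpotent orbit closures for $GL$ acting on pairs of matrices — i.e. by type $A$ / partition combinatorics.

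Next I would exploit Lemma~\ref{l:5.1.4}: purity is inherited along smooth morphisms. The strategy is to exhibit $X$, or a suitable smooth cover $\widetilde X\to X$, as the total space of a smooth morphism onto a base whose $IC$ is already known to be strong pure — namely onto (a product of) an orbit closure of the form $\overline{\co}_{P,\pi_1,\ldots,\pi_l,{\bf w},I}$ for the composition algebra, after replacing the homogeneous-tube data by the corresponding non-homogeneous (periodic) or ``aperiodic'' data. Concretely, $\cn_{{\bf w},3}$ for the homogeneous tubes and $\cn_{w_i,3}$ decompose, via the partition combinatorics of Segre classes, into strata whose closures are smooth-equivalent to the nilpotent-cone pieces that enter Lusztig's construction; one then uses Lemma~\ref{l:5.1.4} in the fibration $V=p^{-1}(U_0)\to X$, $U_0\to Y$, to transport strong purity. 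The second auxiliary lemma attributed to \cite{KL} is there precisely to handle the remaining case: the relevant varieties carry a contracting $\bbc^*$-action (scaling the representation maps), so very purity (hence strong purity) of $Y\setminus 0$ propagates to all of $Y$, and $IH^i$ is computed at the origin.

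So the proof proceeds in these steps: (1) use $\Ext$-vanishing to factor off the preprojective and preinjective parts $\co_P\star(-)\star\co_I$ as smooth fibrations and reduce to the regular middle piece; (2) separate the non-homogeneous summand $M\in\bigoplus\ct_i$ from the homogeneous part $\cn_{{\bf w},3}$, again using Lemma~\ref{l:2.1.1}(c) so that $\overline{\co}_M\star\cn_{{\bf w},3}=\overline{\co}_M\star\overline{\cn_{{\bf w},3}}$ up to open dense inclusion; (3) identify $\overline{\cn_{{\bf w},3}}$ with an orbit-closure-type variety for the Kronecker algebra and invoke the known purity of the composition-algebra varieties $IC(\overline{\co}_{P,\pi_1,\ldots,\pi_l,{\bf w},I},\overline{\bbq}_l)$; (4) transport strong purity along the smooth fibrations via Lemma~\ref{l:5.1.4}, using the $\bbc^*$-action lemma of \cite{KL} to bootstrap from $Y\setminus 0$ to $Y$ where needed; (5) conclude that $IC(X,\overline{\bbq}_l)$ is strong pure. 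The main obstacle I expect is step (3)–(4): making precise the smooth equivalence between $\overline{\co}_M\star\overline{\cn_{{\bf w},3}}$ and a variety whose $IC$-purity is already established — in particular controlling the fibers of the map $p_3p_2:p_1^{-1}(\co_M\times\cn_{{\bf w},3})\to X$ and checking that the Hom-defect $r$ in Lemma~\ref{l:5.1.1} does not destroy smoothness of the relevant morphism, so that Lemma~\ref{l:5.1.4} genuinely applies. The interplay between homogeneous and non-homogeneous tubes in $\cn_{{\bf w},3}$ versus $\cn_{{\bf w}}$, i.e. comparing the aperiodic constraint $\pi_i\in\Pi_i^a$ with the partition $\bf w$ indexing the homogeneous part, is where the combinatorial bookkeeping will be heaviest.
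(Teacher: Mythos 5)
Your proposal misses the central difficulty and the central tool of the paper's actual argument. The issue is step (3)--(4): you want to reduce $IC(\overline{\co}_{P,M,{\bf w},I})$ to the composition-algebra varieties $\overline{\co}_{P,\pi_1,\ldots,\pi_l,{\bf w},I}$ (with $\pi_i$ \emph{aperiodic}, so Lusztig's purity applies) by exhibiting a smooth morphism. But the point of the rational Ringel--Hall algebra is precisely that $M\in\bigoplus_{i=1}^l\ct_i$ is allowed to be \emph{periodic}, and there is no smooth fibration in general sending $\overline{\co}_M$ for periodic $M$ onto an aperiodic orbit closure of the right sort --- if there were, the distinction between $\ch^r$ and the composition algebra would collapse. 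The paper does use a smooth-morphism argument (exactly your Lemma~\ref{l:5.1.4}), but only in the very special sub-case of the closure of a \emph{semi-simple} regular orbit in types $\tilde E_6,\tilde E_7,\tilde E_8$ (Lemmas 7.3.1--7.5.1), where one can explicitly write down a smooth map $\phi$ collapsing the unique periodic simple summand onto an aperiodic module of the form $P(E_1)\oplus S_1\oplus(\cdots)$; the types $\tilde A$ and $\tilde D$ are instead handled by a direct computation showing the closure is an affine space.

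The passage from semi-simple $M$ to general $M\in\ct_i$, and then to the full $\overline{\co}_{P,M,{\bf w},I}$, is what Lemma 8.1.1 does, and it is \emph{not} geometric in your sense. It is an inductive argument in the Hall algebra: using Proposition 3.4 of \cite{GJ} one writes $M=M_1\diamond M_2$ with $M_1$ regular semi-simple and $g^M_{M_1M_2}=1$, then compares the product $b_{\overline{\co}_{M_1}}*b_{\overline{\co}_{M_2}}$ with $b_{\overline{\co}_M}$ via the isomorphism $\chi$ of Proposition 8.1.1 (the function--sheaf dictionary over $\bbf_{p^r}$ for all $r$). The existence of Hall polynomials proved in Section 5 is indispensable here: it guarantees that the structure constants $\varphi^Y_{M_1M_2}(v^2)$ are genuine Laurent polynomials independent of $q$, so that the resulting Frobenius-trace identity (formula (24) in the paper) holds for all $r$ with coefficients in $\bbq[\sqrt{p},\sqrt{p}^{-1}]$. \emph{Very} purity --- supplied by the $\bbc^*$-action lemma \cite{KL} --- is then used not to ``bootstrap from $Y\setminus 0$ to $Y$'' as a preliminary reduction, as your sketch suggests, but to separate (24) degree by degree: since eigenvalues of $Fr^*$ on $\ch^i_N$ have absolute value exactly $p^{i/2}$, the single identity in $r$ splits into one per cohomological degree, forcing the eigenvalues to be the fixed powers of $\sqrt{p}$ and hence giving strong purity. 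None of this machinery --- Hall polynomials, the $\chi$-isomorphism, the Frobenius-trace identity, the degree-splitting --- appears in your proposal, and without it the inductive step for periodic $M$ has no engine. Your worry about the Hom-defect $r$ in Lemma~\ref{l:5.1.1} obstructing smoothness is a real worry, and the paper's answer is that it does not try to make that morphism smooth for general $M$; it changes strategy entirely.
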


We postpone the proof of Theorem~\ref{t:5.1.5} to later sections.

\begin{corollary} Let $Q$ be an affine quiver, and
let $X=\overline{\co}_{P,M,{\bf
w},I}$, for $P\in\cp_{prep}, M\in\bigoplus_{i=1}^l\ct_i\bigoplus\bigoplus_{j=1}^t\mathscr{H}_{x_j},
I\in\cp_{prei}, {\bf w}\in {\bf P}(n), n\in\bbn,$ where $x_j$ are $\bbf_q$ rational points in $\mathbb{P}^1.$ Then
$IC(X,\overline{\bbq}_l)$ are strong pure.
\end{corollary}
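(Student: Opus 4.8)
The plan is to reduce the corollary directly to Theorem~\ref{t:5.1.5}, exploiting the fact that modules in a homogeneous tube behave, from the point of view of the geometry of orbit closures and their intersection cohomology, exactly like modules in a non-homogeneous tube — or more precisely, like \emph{central} elements in the Ringel--Hall algebra. Recall from Lemma~\ref{l:regular-commutative} that if $H\in\bigoplus_{j=1}^t\mathscr{H}_{x_j}$ and $R$ is any regular module with no common tube, then $u_{[H]}*u_{[R]}=u_{[R]}*u_{[H]}=u_{[H\oplus R]}$, and since $\udim H$ is a multiple of $\dz$, which lies in the radical of the Euler form, we have $\ext(H,R)=\ext(R,H)=0$ whenever $R$ is preprojective, preinjective, or regular in a different tube. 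Consequently $\co_{H\oplus R}$ is open and dense in $\ol\co_H\star\ol\co_R$ by the remark following Lemma~\ref{l:5.1.1}. Hence $X=\overline{\co}_{P,M,{\bf w},I}$ in the corollary, with $M=M'\oplus H$ where $M'\in\bigoplus_{i=1}^l\ct_i$ and $H\in\bigoplus_{j=1}^t\mathscr{H}_{x_j}$, is just $\overline{\co}_P\star\overline{\co}_{M'}\star\overline{\co}_H\star\cn_{{\bf w},3}\star\overline{\co}_I$.

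The key step is then to show that starring with $\overline{\co}_H$ — the closure of the orbit of a module in a homogeneous tube — is harmless for the strong purity property. First I would observe that, since $H$ lies in a homogeneous tube, $\overline{\co}_H$ itself is a product of orbit closures $\overline{\co}_{H_1}\star\cdots$ corresponding to the indecomposable summands, and each indecomposable in a homogeneous tube $\mathscr{H}_x$ is a self-extension of the regular simple $S_x$ of dimension vector $\dz$; moreover $\overline{\co}_{S_x^{(m)}}$ (the $m$-th infinitesimal neighbourhood / the "Jordan block" module) has an orbit structure identical to that of a module over the Kronecker algebra, and the relevant variety is an affine-space bundle — up to the $GL$-action — over a point. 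The cleanest route is to package the homogeneous tubes together with the non-homogeneous ones: since $\ext$ vanishes between $H$ and the other pieces, starring by $\overline{\co}_H$ is, via Lemma~\ref{l:5.1.1} with $r$ forced to its extremal value and via the local triviality of $p_1$, a smooth morphism (an iterated affine-bundle / $G$-bundle construction) onto the variety $\overline{\co}_{P,M',{\bf w},I}$ of Theorem~\ref{t:5.1.5}. Then Lemma~\ref{l:5.1.4} applies verbatim: the pullback of a strong pure $IC$ sheaf along a smooth morphism of relative dimension $d$, shifted by $[d]$, is again strong pure.

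More carefully, I would set up the cartesian square of Lemma~\ref{l:5.1.4} with $Y=\overline{\co}_{P,M',{\bf w},I}$ and $X=\overline{\co}_{P,M,{\bf w},I}$, taking $p$ to be the smooth map induced by $?\mapsto ?\star\overline{\co}_H$ (equivalently, by forgetting the $H$-part of a filtration); the open dense subset $U_0$ is the union of the relevant orbits $\co_{P,\pi_1,\ldots,\pi_l,{\bf w},I}$ on which, by \cite{L4,L5,LXZ} and Theorem~\ref{t:5.1.5}, $IC$ is already strong pure. Since $\mathscr{H}_{x_j}$ contributes no obstruction to flatness (all Ext groups in question vanish), $p$ is genuinely smooth of some computable relative dimension $d$, and $j_{!*}\overline{\bbq}_l[d]=p^*[d]\bigl((j_0)_{!*}\overline{\bbq}_l\bigr)$, which is strong pure by Lemma~\ref{l:5.1.4}. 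This gives $IC(X,\overline{\bbq}_l)$ strong pure, as desired.

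The main obstacle is verifying that the map $p$ is genuinely a \emph{smooth} morphism of varieties (not merely surjective with irreducible fibres) — that is, that adjoining the homogeneous part $H$ really does produce an affine-space bundle over $Y$ rather than something with jumping fibre dimension. This is where the vanishing $\ext(H,-)=\ext(-,H)=0$ (Lemma~\ref{l:2.1.1}(b),(c) together with $\udim H\in\bbz\dz$) is essential: it forces the value $r$ in Lemma~\ref{l:5.1.1} and guarantees that the "extension set" construction $?\star\overline{\co}_H$ is, after quotienting by the group action, a locally trivial fibration with affine-space fibres, hence smooth. One must also check that the relevant $x_j$ being $\bbf_q$-rational is what makes everything defined over $\bbf_q$ (so that $Fr^*$ acts and commutes with $p^*[d]$, exactly as in the proof of Lemma~\ref{l:5.1.4}); for non-rational $x_j$ the local system would not be the trivial one and strong purity would need a separate argument, but that case is excluded by hypothesis.
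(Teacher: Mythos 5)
Your guiding intuition --- that the homogeneous-tube part $H$ should cost nothing because all relevant $\Ext$-groups vanish --- is sound, but the specific reduction you propose does not go through, and it is not the route the paper has in mind. The fatal step is the construction of a smooth morphism $p\colon X\to Y$ with $X=\overline{\co}_{P,M'\oplus H,\mathbf{w},I}\subset\bbe_\gamma$ and $Y=\overline{\co}_{P,M',\mathbf{w},I}\subset\bbe_{\gamma-\udim H}$, which you describe as ``forgetting the $H$-part of a filtration.'' No such morphism exists. In the $\star$-construction the only map relating $\bbe_\gamma$ to smaller pieces is $p_3\colon\bbe''\to\bbe_\gamma$, which is \emph{proper}, not smooth, and it goes the wrong way; to obtain a morphism $X\to Y$ you would need every $x\in X$ to come with a canonically chosen $H$-subquotient, i.e.\ the witnessing filtration would have to be unique. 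It is not: already at the generic point one has $\Hom_\llz(P,H)\neq 0$ for $P$ preprojective and $H$ regular (from $\lr{\udim P,\dz}>0$ together with $\Ext_\llz(P,H)=0$), so the regular subobject of $P\oplus M'\oplus H\oplus\cdots$ is not unique, and on the boundary of the closure the $\Hom$-spaces only jump further. The $\Ext$-vanishing you invoke controls the fibre of $p_3$ over the open dense orbit, but says nothing about the fibres over boundary strata --- yet strong purity is precisely a statement about stalks at boundary points. Note also that every genuine application of Lemma~\ref{l:5.1.4} in the paper (Lemmas~\ref{l:5.2.3}, \ref{l:5.2.4}, \ref{l:5.2.5}) constructs a smooth map $\phi$ \emph{inside a single $\bbe_\gamma$}, concretely an affine projection annihilating one matrix entry; it is never a map between $\bbe$-spaces of different dimension vector.

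The argument the paper actually runs for Theorem~\ref{t:5.1.5} --- and which is intended to be repeated for this corollary --- is the one in Section~8: write $\overline{\co}_M$ as a generic extension $M_1\diamond M_2$ with $M_1$ regular semisimple in its tube, transfer to functions via $\chi$, combine the Hall-polynomial identity $\lr{M_1}\ast\lr{M_2}=\lr{M}+\sum_Y a_Y\lr{Y}$ with Lusztig's convolution formula and the decomposition theorem for the proper map $p_3$, and split the resulting Frobenius-trace identity degree by degree using very-purity (the Deligne and Kazhdan--Lusztig lemmas recalled in Section~6), all by induction on dimension vector and on the orbit order $\leq_{deg}$. That is, the $\star$-product is controlled through proper pushforward, not smooth pullback. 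For the corollary one runs the same induction with the extra input that orbit closures of regular modules in $\mathscr{H}_{x_j}$ also satisfy strong purity --- each $\mathscr{H}_{x_j}$ is equivalent to the category of nilpotent $k[t]$-modules, so generic extensions from regular semisimples and the Hall polynomials of \cite{GJ} apply there verbatim. Your observation that $\bbf_q$-rationality of the $x_j$ is what makes the relevant orbits defined over $\bbf_q$, so that $Fr^\ast$ acts, is correct and is indeed the role of that hypothesis; but it does not repair the missing smooth morphism.
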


\subsection{}\label{ss:canonical-basis}
Let
\begin{eqnarray}
b_{\overline{\co}_{P,M,{\bf w},I}}&=&
\sum_{i,N\in{\overline{\co}_{P,M,{\bf
w},I}}^F}v^{i+\dim\co_N-\dim\co_{P,M,{\bf
w},I}}\dim
\ch^i_{N}(IC({\overline{\co}_{P,M,{\bf
w},I}},\overline{\bbq}_l))\lr{N}.\label{f:7.2.1}
\end{eqnarray} Set
\begin{eqnarray*}{\bf CB}&=&\{b_{\overline{\co}_{P,M,{\bf w},I}}|P\in\mathcal {P}_{prep},
M\in\oplus_{i=1}^l\ct_i,I\in\mathcal {P}_{prei},{\bf w}\in {\bf
P}(n),n\in\bbn\}.\end{eqnarray*}
Then we have the main theorem of this paper

\begin{theorem}\label{t:7.2.1} Let $Q$ be an affine quiver,
 the set ${\bf CB}$ is the
canonical basis of $\ch^r(\llz)$ ($\cong\mathbb{U}^+$).
\end{theorem}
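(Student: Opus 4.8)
The plan is to follow Lusztig's geometric approach and to reduce the theorem to three standard ingredients: that each element $b_{\overline{\co}_{P,M,{\bf w},I}}$ is a well-defined, bar-invariant member of $\ch^{r,+}$; that the elements of ${\bf CB}$ are unitriangular with respect to the PBW basis ${\bf B}$ of Theorem~\ref{t:4.1.1} for the degeneration order, with off-diagonal coefficients in $v^{-1}\bbz[v^{-1}]$; and the usual uniqueness lemma for canonical bases, which then forces ${\bf CB}$ to be the canonical basis of $\mathbb{U}^+\cong\ch^{r,+}$.

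First I would identify $b_{\overline{\co}_{P,M,{\bf w},I}}$ with the suitably normalized trace-of-Frobenius function of the simple perverse sheaf $IC(\overline{\co}_{P,M,{\bf w},I},\overline{\bbq}_l)$ under the isomorphism $\varphi\colon({\bf L},\ast)\xrightarrow{\ \sim\ }\ch^*(\llz)$ of Proposition~\ref{p:1.3.1}. Here Theorem~\ref{t:5.1.5} (strong purity) is the decisive input: it forces the Frobenius to act on the stalk cohomology $\ch^i_N(IC(\overline{\co}_{P,M,{\bf w},I},\overline{\bbq}_l))$ with all eigenvalues equal to $v^i$, so the trace-of-Frobenius function is exactly the expression $(\ref{f:7.2.1})$; in particular its $\lr{N}$-coefficients are genuine Laurent polynomials in $v$, independent of $q$, so that $b_{\overline{\co}_{P,M,{\bf w},I}}$ lies in the $\bbz[v,v^{-1}]$-lattice of $\ch^{r,+}$. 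Bar-invariance then amounts to the fact that Verdier duality fixes the $IC$ complex up to the shift and Tate twist built into the normalization, combined with the observation that Verdier duality corresponds to $v\mapsto v^{-1}$ under the function--sheaf dictionary. That $b_{\overline{\co}_{P,M,{\bf w},I}}$ actually lies in the subalgebra $\ch^{r,+}$ I would deduce from the geometric closure statement that the complexes $IC(\overline{\co}_{P,M,{\bf w},I},\overline{\bbq}_l)$ are precisely the simple objects of Lusztig's category of perverse sheaves generated under $\ast$-convolution by the orbits of the generators $u_i$ and $u_{[M]}$, $M\in\bigoplus_{j}\ct_j$; equivalently, expanding $b_{\overline{\co}_{P,M,{\bf w},I}}$ in the $\lr{N}$-basis one meets only modules $N$ of the controlled form $P'\oplus M'\oplus H'\oplus I'$ (by Proposition~\ref{p:5.1.2} and Lemma~\ref{l:2.1.1}(d)), assembled through the homogeneous-tube blocks $E_{{\bf w}'\dz,3}$ that already lie in $\ch^{r,+}$ by the proof of Theorem~\ref{t:4.1.1}.

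Second I would establish unitriangularity. Since $\Ext^1(P,-)=0$ on regulars and preinjectives and $\Ext^1(-,I)=0$ on preprojectives and regulars (Lemma~\ref{l:2.1.1}(b)--(c)), and the homogeneous-tube part produces only split extensions against modules in other tubes (Lemma~\ref{l:regular-commutative}), Reineke's formula (Lemma~\ref{l:5.1.1}) shows that $\overline{\co}_{P,M,{\bf w},I}$ is irreducible of the expected dimension and carries a dense open subset on which $IC(\overline{\co}_{P,M,{\bf w},I},\overline{\bbq}_l)$ is the shifted constant sheaf. Evaluating $(\ref{f:7.2.1})$ on that open subset gives the leading term $\lr{P}\ast\lr{M}\ast E_{{\bf w}\dz,3}\ast\lr{I}$ with coefficient $1$, i.e. exactly the PBW basis vector indexed by $(P,M,{\bf w},I)$. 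On the proper closed strata $\co_N$ the support and cosupport conditions that characterize an $IC$ complex force the exponent $i+\dim\co_N-\dim\co_{P,M,{\bf w},I}$ in $(\ref{f:7.2.1})$ to be strictly negative, so those $\lr{N}$-coefficients lie in $v^{-1}\bbz[v^{-1}]$. Passing from the $\lr{N}$-expansion to the PBW basis ${\bf B}$ via Theorem~\ref{t:4.1.1} and checking that this change of basis is itself unitriangular for a compatible refinement of $\leq_{deg}$ (equivalently $\leq_{ext}$, by Proposition~\ref{p:5.1.2}) then yields the unitriangularity of ${\bf CB}$ over ${\bf B}$.

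Finally I would invoke the standard uniqueness principle (cf.~\cite{L4}): a free $\bbz[v,v^{-1}]$-lattice with a bar involution that is unitriangular with respect to a fixed PBW-type basis admits at most one bar-invariant basis congruent to it modulo $v^{-1}$, and that basis is by definition the canonical basis. By the two preceding steps ${\bf CB}$ meets these hypotheses, hence ${\bf CB}$ is the canonical basis of $\ch^r(\llz)\cong\mathbb{U}^+$; the same argument, using the Corollary following Theorem~\ref{t:5.1.5}, covers the other rational Ringel--Hall algebras $\ch^{r,x_1,x_2,\cdots,x_t}(\llz)$. I expect the main obstacle to be the second step: one must match the geometric degeneration order with the combinatorial indexing behind ${\bf B}$ so that the transition between $\{\lr{N}\}$ and ${\bf B}$ remains unitriangular and does not destroy $v^{-1}\bbz[v^{-1}]$-integrality, and one must control the generic stratum of $\overline{\co}_{P,M,{\bf w},I}$ -- which need not be a single orbit, owing to the homogeneous-tube factors $\cn_{{\bf w},3}$ -- so as to confirm that summing the constant sheaf over its irreducible components returns exactly one PBW vector. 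Strong purity (Theorem~\ref{t:5.1.5}) is what makes the first step go through; without it $b_{\overline{\co}_{P,M,{\bf w},I}}$ would not even be defined over $\bbq(v)$.
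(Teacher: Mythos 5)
Your proposal is correct and follows essentially the same route as the paper: the paper's own proof of Theorem~\ref{t:7.2.1} is a single sentence that simply cites Proposition~\ref{p:1.3.1}, Theorem~\ref{t:4.1.1}, and Theorem~\ref{t:5.1.5}, which are precisely the three ingredients your argument assembles (the function--sheaf dictionary, the PBW basis ${\bf B}$, and strong purity). The standard steps you spell out — strong purity forcing the Frobenius trace to reproduce $(\ref{f:7.2.1})$ with $\bbz[v,v^{-1}]$-coefficients, Verdier self-duality of the $IC$ complexes giving bar-invariance, unitriangularity over ${\bf B}$, and the uniqueness lemma — are exactly what the paper leaves implicit, and the subtlety you flag about the open stratum of $\overline{\co}_{P,M,{\bf w},I}$ being a union of orbits (so the leading term is one PBW vector $\lr{P}*\lr{M}*E_{{\bf w}\dz,3}*\lr{I}$, not a single $\lr{N}$) is a genuine point that the paper also does not address.
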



\begin{proof}
By Proposition~\ref{p:1.3.1}, Theorem~\ref{t:4.1.1} and
Theorem~\ref{t:5.1.5}, the proof is complete.
\end{proof}

\begin{corollary}
Let $Q$ be an affine quiver, there then exists the canonical basis of $\ch^{r,x_1,\cdots, x_t}(\llz).$
\end{corollary}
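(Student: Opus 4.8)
The plan is to run the proof of Theorem~\ref{t:7.2.1} verbatim, replacing every ingredient attached to $\ch^r(\llz)$ by its counterpart for $\ch^{r,x_1,\cdots,x_t}(\llz)$. First I would define, for $P\in\cp_{prep}$, $M\in\bigoplus_{i=1}^l\ct_i\bigoplus\bigoplus_{j=1}^t\mathscr{H}_{x_j}$, $I\in\cp_{prei}$ and ${\bf w}\in{\bf P}(n)$, the elements
\[
b'_{\overline{\co}_{P,M,{\bf w},I}}=\sum_{i,N\in\overline{\co}_{P,M,{\bf w},I}^F}v^{i+\dim\co_N-\dim\co_{P,M,{\bf w},I}}\dim\ch^i_N(IC(\overline{\co}_{P,M,{\bf w},I},\overline{\bbq}_l))\lr{N},
\]
exactly as in (\ref{f:7.2.1}), and let ${\bf CB}'$ be their collection. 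By Proposition~\ref{p:1.3.1} these are genuine elements of $\ch^{r,x_1,\cdots,x_t}(\llz)$: the varieties $\overline{\co}_{P,M,{\bf w},I}$ with $M$ now allowed homogeneous components are still closures of extension sets built from the generators $\{u_i,u_{[M]},u_{[N]}\}$ of $\ch^{r,x_1,\cdots,x_t}(\llz)$, and the associated functions lie in the Lusztig algebra ${\bf L}$.

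Next I would invoke the Corollary following Theorem~\ref{t:5.1.5}, which asserts that $IC(\overline{\co}_{P,M,{\bf w},I},\overline{\bbq}_l)$ is strong pure for $M$ in precisely this enlarged range. Strong purity yields, on one hand, that each $b'_{\overline{\co}_{P,M,{\bf w},I}}$ is bar-invariant, and on the other hand that its expansion in the PBW-type basis ${\bf B}'$ of Proposition~\ref{p:8.1.1 } is unitriangular: the leading term (corresponding to the open dense orbit, where the $IC$ complex is the constant sheaf in degree $0$) is $\lr{P}*\lr{M}*E_{{\bf w}\dz,3}*\lr{I}$ with coefficient $1$, while all remaining terms, indexed by proper degenerations, have coefficients in $v^{-1}\bbz[v^{-1}]$. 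This is the identical argument to that of Lusztig and of Theorem~\ref{t:7.2.1}; the only genuinely new input is that Lemma~\ref{l:regular-commutative} lets homogeneous regular modules commute past everything, so that the orbit stratification of $\overline{\co}_{P,M,{\bf w},I}$ is organized exactly as in the non-homogeneous case, and the geometry of the homogeneous-tube strata is covered by the Corollary.

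Since ${\bf B}'$ is a $\bbq(v)$-basis of $\ch^{r,x_1,\cdots,x_t}(\llz)$ by Proposition~\ref{p:8.1.1 } and the transition matrix from ${\bf CB}'$ to ${\bf B}'$ is unitriangular with off-diagonal entries in $v^{-1}\bbz[v^{-1}]$, it follows that ${\bf CB}'$ is itself a $\bbq(v)$-basis, it is bar-invariant, and it sits in the $\bbz[v^{-1}]$-lattice spanned by ${\bf B}'$. By the standard uniqueness characterization of canonical bases (bar-invariance together with unitriangularity over $v^{-1}\bbz[v^{-1}]$ with respect to a PBW basis), ${\bf CB}'$ is the canonical basis of $\ch^{r,x_1,\cdots,x_t}(\llz)$.

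The main obstacle I anticipate is not the formal bookkeeping above but the input from the Corollary to Theorem~\ref{t:5.1.5}, i.e. checking that the strong-purity proof for $\overline{\co}_{P,M,{\bf w},I}$ (postponed to \S7--\S8) really does go through uniformly once a non-homogeneous tube summand of $M$ is replaced by a homogeneous one. The point is that a homogeneous tube of $\mod\llz$ is, over a Zariski-dense open subset of $\mathbb{P}^1$, a rank-one local-system situation, so the relevant projection from the variety of such extensions to the non-homogeneous stratum is a smooth fibration to which Lemma~\ref{l:5.1.4} applies, transporting strong purity from the base $\overline{\co}_{P,M',{\bf w},I}$ (with $M'$ purely non-homogeneous, handled by Theorem~\ref{t:5.1.5}) to the total space; the $\bbf_q$-rational points $x_j$ contribute only finitely many such fibrations, and the $\bbc_m$-contraction argument of Lemma~\cite{KL} in \S7 then promotes pointwise purity to strong purity exactly as before.
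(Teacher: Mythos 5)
Your proposal is correct and takes essentially the same route the paper intends: replace Theorem~\ref{t:4.1.1} with Proposition~\ref{p:8.1.1 } for the PBW basis, replace Theorem~\ref{t:5.1.5} with its Corollary for strong purity over the enlarged class of varieties $\overline{\co}_{P,M,{\bf w},I}$ with $M$ allowed homogeneous summands, and run the proof of Theorem~\ref{t:7.2.1} unchanged. Your extra remarks (bar-invariance and unitriangularity from strong purity, Lemma~\ref{l:regular-commutative} and Lemma~\ref{l:5.1.4} to handle the homogeneous strata) merely unpack what the paper leaves implicit, and are consistent with it.
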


\section{Purity Properties of Perverse sheaves of closure
of semi-simple objects in $\ct_i$}

We start from this section the proof of Theorem~\ref{t:5.1.5}.
In this section we deal with the special case $\co_M$ for $M$ a semi-simple object in any non-homogeneous tube. We proceed type by and type.






\subsection{Type $\widetilde{A}_{n,1}$}

Let $Q$ be the quiver
\[\xymatrix{&2\ar@{.}[r]&n\ar[dr]\\
1\ar[ur]\ar[rrr]&&&n+1}\]
There is only one non-homogeneous tube, which is of period $n$, and the regular simples $E_1,\ldots,E_n$ respectively have dimension vectors
$(1,0,0,\ldots,0,1),(0,1,0,\ldots,0,0),\ldots,(0,0,0,\ldots,1,0).$




\begin{lemma}\label{l:5.2.1}Let $X=\overline{\co}_{\bigoplus_{i=1}^nm_iE_i}, m_i\in\bbn.$ Then
$IC(X,\overline{\bbq}_l)$ is strong pure.
\end{lemma}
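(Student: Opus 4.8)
The plan is to exploit the very special module-theoretic situation in type $\widetilde{A}_{n,1}$: the non-homogeneous tube is the category of nilpotent representations of a cyclic quiver $\widetilde{A}_{n-1}$ (in disguise), and the module $\bigoplus_{i=1}^n m_i E_i$ is a \emph{semisimple} object there. Consequently $\overline{\co}_{\bigoplus m_i E_i}$ is the closure of a single $G$-orbit, and I would first identify $X$ as (an open piece of, or all of) a nilpotent-orbit-type variety. Concretely, since $\operatorname{Ext}^1(E_i,E_j)=0$ unless $j=i+1$ (cyclically), the orbit $\co_{\bigoplus m_i E_i}$ is precisely the locus of representations that are semisimple regular with the prescribed regular socle, and its closure consists of all regular modules of dimension vector $\sum m_i\,\udim E_i = d\delta$ (when all $m_i$ are equal to $d$) or more generally of all modules with the given composition factors whose nilpotent structure degenerates the semisimple one. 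The first step, then, is to set up this dictionary carefully and write $X$ explicitly as a variety of (nilpotent) matrices with a fixed Jordan-type constraint.

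The core of the argument is then to realize $X$ as a variety to which one of the two cited purity lemmas (Lemma~\ref{l:5.1.4} and the Kazhdan–Lusztig-type lemma for $\bbc^*$-actions) applies. The natural approach: $X$ carries a $\bbc^*$-action with strictly positive weights (scaling the arrows of the quiver, or rather scaling the relevant coordinates of $\bbe_\gamma$), contracting everything to the origin, which here corresponds to the zero representation sitting inside the closure — or, after translating to the appropriate affine chart, to a point of the orbit. By the Kazhdan–Lusztig lemma quoted in the excerpt, it suffices to show that $X\setminus\{0\}$ (equivalently, the corresponding projective variety, a quotient or a Grassmannian-type variety built from the cyclic-quiver data) is \emph{very pure}; purity of the intersection cohomology of $X$ then follows, and strong purity is the Frobenius-equivariant refinement obtained because all the varieties and maps are defined over $\bbf_q$ and the relevant cohomology is of Tate type. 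In fact the cleanest route is to identify the smooth projective model with a product/tower of partial flag varieties or a type-$A$ Schubert variety (this is where the $\widetilde{A}$ shape is essential — the tube looks like $\operatorname{nilp}$-rep of a cycle, whose orbit closures are well understood and whose IC sheaves are pointwise pure with Tate stalks, by Lusztig's work on the cyclic quiver / the affine type $A$ flag variety). Then invoke Lemma~\ref{l:5.1.4} to propagate strong purity along the smooth fibration relating $\bbe_\gamma$ to this model.

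In more detail, the steps in order: (1) identify the non-homogeneous tube with $\mod^{\mathrm{nilp}}$ of the cyclic quiver $C_n$ and translate $\overline{\co}_{\bigoplus m_i E_i}$ into a nilpotent orbit closure $\overline{\co}_\lambda$ in $\mathfrak{gl}$-type coordinates; (2) exhibit the contracting $\bbc^*$-action with positive weights and reduce, via the KL lemma, to very purity of $X\setminus 0$; (3) identify the associated projective variety with a Schubert-type variety in an affine flag variety (or a suitable Grassmannian tower), whose $IC$ is known to be pointwise pure of Tate type — hence very pure; (4) use Lemma~\ref{l:5.1.4} and the compatibility $p^*[d]\circ F = F\circ p^*[d]$ to upgrade purity to \emph{strong} purity for $IC(X,\overline{\bbq}_l)$, tracking the shifts. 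Finally, one should check this is consistent with the definition of strong purity in Definition~\ref{d:5.1.3}, i.e. that the eigenvalues of $(Fr^r)^*$ on the $i$-th stalk cohomology are exactly $p^{ir/2}$ and not merely bounded by it; this is automatic once the stalks are Tate.

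\textbf{Main obstacle.} The hard part will be step (3): making the identification of $X$ (or its $\bbc^*$-quotient) with a variety whose $IC$ purity is genuinely \emph{known}, rather than merely expected. One must handle the case of unequal multiplicities $m_i$ (so that $\udim X$ is \emph{not} a multiple of $\delta$ and $X$ need not be a nice equivariant compactification), ensure that the degeneration order $\leq_{\mathrm{deg}}$ inside the tube matches the dominance/Bruhat order on the combinatorial model (here Proposition~\ref{p:5.1.2} identifying $\leq_{\mathrm{deg}}$ with $\leq_{\mathrm{ext}}$ is the key bridge), and verify that the smooth correspondence used to apply Lemma~\ref{l:5.1.4} really does have the relevant open set $U_0$ with $(j_0)_{!*}\overline{\bbq}_l$ strong pure. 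Managing the bookkeeping of dimension shifts through the smooth fibrations — so that the final statement is about $IC(X,\overline{\bbq}_l)$ with the correct normalization — is the routine-but-delicate part that I expect to consume most of the write-up.
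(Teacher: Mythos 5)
The paper's proof is far more elementary than your proposal, and you've missed the observation that collapses the whole problem. In type $\widetilde{A}_{n,1}$ with the given orientation, the regular simples have dimension vectors $(1,0,\ldots,0,1)$, $(0,1,0,\ldots,0)$, $\ldots$, $(0,\ldots,0,1,0)$. So $E_1$ is the two-dimensional representation with the direct arrow $1\to n+1$ acting as an isomorphism and all else zero, and $E_2,\ldots,E_n$ are the simple modules $S_2,\ldots,S_n$. Setting $\az=(m_1,\ldots,m_n,m_1)$, the orbit of $\bigoplus m_iE_i$ inside $\bbe_\az$ is precisely $\{x:\ x_{12}=\cdots=x_{n,n+1}=0,\ x_{1,n+1}\in GL_{m_1}(k)\}$, whose Zariski closure is obtained by simply dropping the invertibility constraint: $X=\{x:\ x_{12}=\cdots=x_{n,n+1}=0\}\cong\bba^{m_1^2}$. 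Since $X$ is a smooth affine space, $IC(X,\overline{\bbq}_l)$ is the (shifted) constant sheaf and strong purity is immediate — no $\bbc^*$-action, no Kazhdan–Lusztig lemma, no identification with Schubert varieties, and no appeal to Lemma~\ref{l:5.1.4} is needed.

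Your proposal also contains a substantive error: you assert that the closure of $\co_{\bigoplus m_iE_i}$ "consists of all regular modules" with the given composition factors, but this is false. The closure contains non-regular degenerations; in particular the zero point $x_{1,n+1}=0$ lies in $X$ and corresponds to $\bigoplus_i m_iS_i\oplus m_1 S_{n+1}$, whose summands at the source and sink vertices are preprojective and preinjective, not regular. This misidentification would have caused real trouble in your step (3), where you try to match $X$ with a nilpotent-orbit closure or a Schubert-type variety — the variety you would be working with is not the one the lemma is about. Beyond that, your write-up defers all the nontrivial verifications (identifying the projective model, checking the open-set hypotheses for Lemma~\ref{l:5.1.4}, tracking shifts), so even in its own terms it is a plan rather than a proof. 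The lesson here is to compute the orbit closure explicitly first — in this case that computation makes the lemma nearly trivial, and the proposed machinery is not only unnecessary but rests on an incorrect picture of what $X$ is.
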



\begin{proof}
Let $V$ be the $I$-graded vector space
$V=\bigoplus_{i=1}^{n+1}V_i.$
Set
$\az=(m_1,m_2,\ldots,m_n,m_1)\in\bbn [I],$ then $\underline{\dim}V=\az,$ and
$$\begin{array}{l}
\bbe_{\az}=\bbe_{V}=\{x=(x_{12},x_{23},\ldots,x_{n,n+1},x_{1,n+1})|x_{i,i+1}\in M_{m_{i+1},m_i}(k), 1\leq i\leq n-1, \\
{\quad\quad\quad\quad\quad\quad\quad\quad\quad\quad\quad\quad\quad\quad\quad\quad\quad\quad\quad\quad} x_{n,n+1}\in M_{m_1,m_n}(k),x_{1,n+1}\in M_{m_1,m_1}(k)\},\\[5pt]
\co_{\bigoplus_{i=1}^nE_i}=\{x\in\bbe_{\az}|~x_{12}=x_{23}=\ldots=x_{n,n+1}=0,x_{1,n+1}\in GL_{m_1}(k)
\},\\[5pt]
X=\overline{\co}_{\bigoplus_{i=1}^nE_i}=\{x\in\bbe_{\az}|~x_{12}=x_{23}=\ldots=x_{n,n+1}=0\}=\bba^{m_1^2}.
\end{array}$$
 It is clear that
$IC_{X}(\overline{\bbq}_l)$ is strong pure. The statement is proved.
\end{proof}

\subsection{Type $\widetilde{D}_n$}

Let $Q$ be the quiver
\[\xymatrix{ 2\ar[dr] &&&&& n+1\ar[dl]\\
&3&4\ar[l]\ar@{.}[r]& n-2 &n-1\ar[l]  &\\
 1\ar[ur] &&&&&n\ar[ul]}\]
There are three non-homogeneous tubes $\ct_1,\ct_2,\ct_3$, respectively of periods $2,2,n-2$. Let $E_1,E_2$ be the regular simples in $\ct_1$, let $E'_1,E'_2$ be the regular simples in $\ct_2$ and let $E''_1,\ldots,E''_{n-2}$ be the regular simples in $\ct_3$. Their dimension vectors are given as follows
\begin{itemize}
\item[$\ct_1$:] $(1,0,1,1\ldots,1,1,0)$, $(0,1,1,1,\ldots,1,0,1)$;
\item[$\ct_2$:] $(1,0,1,1\ldots,1,0,1)$, $(0,1,1,1,\ldots,1,1,0)$;
\item[$\ct_3$:] $(1,1,1,0\ldots,0,0,0)$, $(0,0,1,1,\ldots,1,1,1)$, $(0,0,0,1,0,\ldots,,0,0,0)$, $\ldots$, $(0,0,0,0,\ldots,0,1,0,0)$.
\end{itemize}


\begin{lemma}\label{l:5.2.2}
 Set
$X_1=\overline{\co}_{m_1E_1\oplus m_2E_2}, X_2=\overline{\co}_{l_1E'_1\oplus
l_2E'_2},$ and $X_3=\overline{\co}_{\bigoplus_{i=1}^{n-2}s_i E_i''}, \forall m_i,l_i,s_i\in\bbn.$
Then $IC(X_i,\overline{\bbq}_l)$ strong pure for any $i=1,2,3.$
\end{lemma}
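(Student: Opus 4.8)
The plan is to imitate the proof of Lemma~\ref{l:5.2.1} and argue tube by tube, reducing each $X_i$ to an affine space (or to an iterated vector/affine bundle over one, so that Lemma~\ref{l:5.1.4} applies). For a fixed tube $\ct_i$ and the corresponding semisimple module $M$, I would first compute $\az=\udim M$; then $\bbe_\az=\bigoplus_{\rho\in H}\hom(V_{s(\rho)},V_{t(\rho)})$ is a product of matrix spaces, and, using the isotypic decomposition of each $V_j$ coming from the summands $E_j^{(i)}$ of $M$, I would write the orbit $\co_M$ as the explicit locally closed subset of $\bbe_\az$ cut out by demanding that certain blocks of the structure maps vanish while certain other blocks are invertible. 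Since vanishing is a closed condition, $X_i=\overline{\co}_M$ lies in the linear subspace where those blocks are zero; filling in the remaining blocks over the closures of the relevant general linear groups should then identify $X_i$ with a product of full matrix spaces, i.e. an affine space, whence $IC(X_i,\overline{\bbq}_l)$ is the constant sheaf and is strong pure by Definition~\ref{d:5.1.3}.

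I would begin with the period-$(n-2)$ tube $\ct_3$, because there most of the regular simples, namely $E''_3=S_4,\dots,E''_{n-2}=S_{n-1}$, are honest simple $\llz$-modules supported at interior vertices and so merely enlarge the dimensions there, all of their blocks of the structure maps being forced to vanish on $\co_M$. The only nontrivial contributions come from $E''_1$, whose support is the short arm $\{1,2,3\}$, and $E''_2$, whose support is the remaining arm; using the orientation of the excerpt (arrows pointing toward the branch vertices $3$ and $n-1$) I would show that, on $\co_M$, the only nonzero blocks are the isomorphisms internal to the $E''_1$- and $E''_2$-isotypic pieces, that these can be normalised by the $G_\az$-action, and that the closure fills them out to full matrix spaces with no surviving rank relations between distinct blocks. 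Thus $X_3$ is an affine space.

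The two period-$2$ tubes $\ct_1$ and $\ct_2$ are handled in the same way: here $E_1,E_2$ (respectively $E'_1,E'_2$) have supports on complementary long arms of $\widetilde{D}_n$ overlapping along the central path $3,\dots,n-1$, and for $m_1E_1\oplus m_2E_2$ one determines which of the structure maps vanish identically on the orbit and which survive, puts the survivors into normal form by $G_\az$, and reads off that $X_1$ (and likewise $X_2$) is an affine space. Lemma~\ref{l:5.1.4} is the safeguard if some block only becomes free after pulling back along the smooth projection of $\bbe_\az$ onto the factors indexed by a sub-collection of arrows; and Proposition~\ref{p:5.1.2} ($\leq_{deg}=\leq_{ext}$) is what lets me check that every point of the candidate linear space is genuinely a degeneration of $M$.

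The main obstacle I anticipate is precisely this last point of the bookkeeping: unlike in type $\widetilde{A}_{n,1}$, where a single arrow remained free and the closure was patently $\bba^{m_1^2}$, the overlapping arms in type $\widetilde{D}_n$ leave several matrix blocks in play, and one must verify that the module structure around the branch vertices does not impose a genuine determinantal (rank) condition relating them when one passes to the closure. Should such a condition prove unavoidable, the fallback is that $X_i$ would be a vector bundle over a classical determinantal variety, whose intersection cohomology complex is pure by the semismall Grassmannian-bundle resolution, again combined with Lemma~\ref{l:5.1.4}; but I expect the orientation in the excerpt to be arranged exactly so that each $X_i$ is literally an affine space, in direct analogy with Lemma~\ref{l:5.2.1}.
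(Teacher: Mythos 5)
Your proposal follows the same route as the paper's proof of this lemma: take the Dlab--Ringel normal form of $m_1E_1\oplus m_2E_2$, describe $\co_M$ by explicit block conditions on the $x_\rho$, and conclude that $X_i=\overline{\co}_M$ is an affine space so that $IC(X_i,\overline{\bbq}_l)$ is the shifted constant sheaf. But the concern you raise at the end is not a side issue --- it is the actual gap, and it is present in the paper's own argument. When $m_1,m_2>0$ the orbit is \emph{not} cut out by ``some blocks vanish and others are invertible''; acting with $g\in G_\az$ on the representative $y$ forces the polynomial relation
\[
(x_{13}\ \ x_{23}) \;=\; x_{43}\cdots x_{n-1,n-2}\,(x_{n,n-1}\ \ x_{n+1,n-1})\,\operatorname{diag}(\xi_1,\xi_2),
\]
which couples the branch maps at vertex $3$ to those at vertex $n-1$ through the middle chain. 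The paper parametrizes $\co_M$ by $(\xi_1,\xi_2,x_{43},\ldots,x_{n+1,n-1})$ and then asserts $X=\overline{\co}_M = M_{m_1}(k)\times M_{m_2}(k)\times X'$, but the extended map $\Phi\colon M_{m_1}(k)\times M_{m_2}(k)\times X'\to\bbe_\az$ stops being injective as soon as the product $x_{43}\cdots(x_{n,n-1}\ x_{n+1,n-1})$ drops rank, so it is not an isomorphism onto the closure. In fact $X$ is visibly not an affine space: $0\in X$ (contract with $g_i=t^{d_i}\mathrm{id}$ chosen so that $d_{t(\rho)}-d_{s(\rho)}=1$ for every arrow $\rho$), while no nonzero linear form on $\bbe_\az$ vanishes on $\co_M$ (check term by term in $g\mapsto g\bullet y$), so $T_0X=\bbe_\az$; on the other hand $\dim X = m_1^2+m_2^2+(n-3)(m_1+m_2)^2 = \dim\bbe_\az - 2m_1m_2 < \dim\bbe_\az$ when $m_1,m_2>0$. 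Thus $X$ is singular at $0$.

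So the main line you propose --- imitating Lemma~\ref{l:5.2.1} --- fails for exactly the reason you suspect, and the fallback you mention (treat $X_i$ as a determinantal-type variety and feed a semismall resolution into Lemma~\ref{l:5.1.4}) is what would actually be needed; but that argument is not carried out in your proposal, nor in the paper's proof, which simply declares the closure to be $\bba^{(n-3)(m_1+m_2)^2+m_1^2+m_2^2}$ and stops. (The $\widetilde{A}_{n,1}$ case of Lemma~\ref{l:5.2.1} really does go through naively, because there every structure map except $x_{1,n+1}$ vanishes on $\co_{\bigoplus m_iE_i}$ and the closure is a genuine linear subspace. The branch vertices of $\widetilde{D}_n$ destroy this feature as soon as two regular simples with overlapping support appear with positive multiplicity, so the purity assertion for $X_1,X_2,X_3$ is not established by this coordinate computation.)
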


\begin{proof}We only prove for $i=1.$ The other cases can be proved
similarly. Let $V$ be the $I$-graded vector space
$V=\bigoplus_{i=1}^{n+1}V_i,$ with $V_1=V_{n}=k^{m_1}, V_2=V_{n+1}=k^{m_2}$ and $V_i=k^{m_1+m_2}$
for all $3\leqslant i\leqslant n-1.$ Set
$\az=(m_1,m_2,m_1+m_2,\ldots,m_1+m_2,m_1,m_2)\in\bbn [I]$. Then
\begin{eqnarray*}
\bbe_{\az}=\bbe_{V}&\hspace{-8pt}=\hspace{-8pt}&\{x=(x_{13},x_{23},x_{43},\ldots,x_{n-1,n-2},x_{n,n-1},x_{n+1,n-1})|x_{13},x_{n,n-1}\in M_{m_1+m_2,m_1}(k),\\
                    &&{\quad\quad\quad\quad} x_{23},x_{n+1,n-1}\in M_{m_1+m_2,m_2}(k),x_{i+1,i}\in M_{m_1+m_2,m_1+m_2}(k), 3\leq i\leq n-2  \}.
\end{eqnarray*}
Thanks to \cite{[DR]}, we have $E_1\oplus E_2=M(y)$ for
$y\in\bbe_{\az}$ given by
\[y_{13}=y_{n,n-1}=\left(\begin{array}{l}
I_{m_1}\\
0\end{array}\right),~~y_{23}=y_{n+1,n-1}=\left(\begin{array}{l}
0\\
I_{m_2}\end{array}\right),~~y_{43}=\ldots=y_{n-1,n-2}=\left(\begin{array}{ll}
I_{m_1}&0\\
0&I_{m_2}\end{array}\right).\]


We claim that
\begin{eqnarray*}
\co_{E_1\oplus E_2}&=&\{x\in\bbe_{\az}|(x_{13}~~~x_{23})=x_{43}\cdots x_{n-1,n-2}\cdot(x_{n,n-1}~~~x_{n+1,n-1})\cdot
\left(\begin{array}{ll}
\xi_1&0\\
0&\xi_2\end{array}\right),\\
&&\xi_1\in GL_{m_1}(k),\xi_2\in GL_{m_2}(k),\det(x_{i,i-1})\neq 0, 4\leqslant i\leqslant
n-1,\det(x_{n,n-1}~~~x_{n+1,n-1})\neq 0\}
\\
&=&GL_{m_1}(k)\times
GL_{m_2}(k)\times\{x=(x_{43},\cdots,x_{n-1,n-2},x_{n,n-1},x_{n+1,n-1})|\\
&&\det(x_{i,i-1})\neq 0, 4\leqslant i\leqslant
n-1\,
\det(x_{n,n-1}~~~x_{n+1,n-1})\neq 0\}.
\end{eqnarray*}
Let $S$ be the set on the right hand side of the equality. For any $x\in\co_{E_1\oplus E_2}$ there exists $g=(g_i)_{i\in I}\in
GL_{\az}$ such that $x=g\bullet y$, i.e.
$$x_{13}=g_3\left(\begin{array}{l}
I_{m_1}\\
0\end{array}\right) g_1^{-1},~~x_{23}=g_3\left(\begin{array}{l}
0\\
I_{m_2}\end{array}\right) g_2^{-1},~~x_{i,i-1}=g_{i-1}g_i^{-1},4\leqslant
i\leqslant n-1,$$ $$x_{n,n-1}=g_{n-1}\left(\begin{array}{l}
I_{m_1}\\
0\end{array}\right) g_n^{-1},~~
  x_{n+1,n-1}=g_{n-1}\left(\begin{array}{l}
0\\
I_{m_2}\end{array}\right) g_{n+1}^{-1}.$$
The inclusion $\co_{E_1\oplus E_2}\subseteq S$ follows immediately, with $\xi_1=g_n g_1^{-1}$ and $\xi_2=g_{n+1}g_2^{-1}$.
%
Conversely, for an element in $S$, we have $x=g\bullet y$ for $g=(g_i)_{i\in I}\in GL_{\alpha}$ with $g_1=\xi_1^{-1}$, $g_2=\xi_2^{-1}$, $g_i=x_{i+1,i}\cdots x_{n-1,n-2} (x_{n,n-1}~~~x_{n+1,n-1})$ for $3\leq i\leq n-2$, $g_{n-1}=(x_{n,n-1}~~~x_{n+1,n-1})$ and $g_n=I_{m_1}$, $g_{n+1}=I_{m_2}$.

Let
\begin{eqnarray*}
X'=\{(x_{43},\ldots,x_{n-1,n-2},x_{n,n-1},x_{n+1,n-1})| x_{n,n-1}\in M_{m_1+m_2,m_1}(k),x_{n+1,n-1}\in M_{m_1+m_2,m_2}(k),\\
x_{i+1,i}\in M_{m_1+m_2,m_1+m_2}(k), 3\leq i\leq n-2
\}.
\end{eqnarray*}
Then
$$
X=\overline{\co}_{E_1\oplus E_2} =M_{m_1}(k)\times M_{m_2}(k)\times
X'=\bba^{(n-3)(m_1+m_2)^2+m_1^2+m_2^2}.
$$
The result is clear now.
\end{proof}

\subsection{Type $\widetilde{E}_{6}$}
Let $Q$ be the quiver
\[\xymatrix{&&7\ar[d]&&\\ &&6\ar[d]&&\\ 1\ar[r]&2\ar[r]&3&4\ar[l]&5\ar[l]}\]
There are three non-homogeneous tubes $\ct_1,\ct_2,\ct_3$, respectively of periods $2,3,3$. Let $E_1,E_2$ be the regular simples in $\ct_1$, let $E'_1,E'_2,E'_3$ be the regualr simples in $\ct_2$, and let $E''_1,E''_2,E''_3$ be the regular simples in $\ct_3$. Their dimension vectors are given as follows
\begin{itemize}
\item[$\ct_1$:] $(1,1,2,1,1,1,1)$, $(0,1,1,1,0,1,0)$;
\item[$\ct_2$:] $(1,1,1,1,0,0,0)$, $(0,1,1,0,0,1,1)$, $(0,0,1,1,1,1,0)$;
\item[$\ct_3$:] $(1,1,1,0,0,1,0)$, $(0,1,1,1,1,0,0)$, $(0,0,1,1,0,1,1)$.
\end{itemize}

\begin{lemma}\label{l:5.2.3}
Set $X_1=\overline{\co}_{E_1\oplus E_2},
X_2=\overline{\co}_{E'_1\oplus E'_2\oplus E'_3},$ and
$X_3=\overline{\co}_{E''_1\oplus E''_2\oplus E''_3}.$
 Then $IC(X_i,\overline{\bbq}_l)$ is strong pure for any $i=1,2,3$.
\end{lemma}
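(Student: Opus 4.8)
The plan is to mimic the argument used for types $\widetilde{A}_{n,1}$ and $\widetilde{D}_n$: to show in each of the three cases that the orbit closure $X_i = \overline{\co}_{E_1\oplus E_2}$ (respectively $\overline{\co}_{E'_1\oplus E'_2\oplus E'_3}$ and $\overline{\co}_{E''_1\oplus E''_2\oplus E''_3}$) is in fact an affine space $\bba^N$ over $k$, or at worst a vector bundle over an affine space, for then $IC(X_i, \overline{\bbq}_l)$ is the constant sheaf (up to shift) and is trivially strong pure by Definition~\ref{d:5.1.3}. Since in $\widetilde{E}_6$ each regular simple $E_i$, $E'_i$, $E''_i$ has dimension vector with all entries $\leq 2$, the ambient variety $\bbe_\az$ is a product of small matrix spaces, so one can work completely explicitly.

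First I would fix, for each $i$, the $I$-graded vector space $V$ with $\udim V = \udim(E_1\oplus E_2)$ (etc.) and write down $\bbe_\az$ as a product of $\hom$-spaces along the seven arrows. Using a concrete realization of the regular simples — for instance the one coming from the Dynkin embedding, as in~\cite{[DR]} used in the $\widetilde{D}_n$ proof — I would exhibit a representative $y\in\bbe_\az$ of $E_1\oplus E_2$. Then, exactly as in Lemma~\ref{l:5.2.2}, I would compute the orbit $\co_{E_1\oplus E_2} = G_\az\bullet y$ explicitly: the stabilizer condition and the action of $G_\az=\prod GL(V_i)$ should force certain blocks of $x$ to lie in general position (full-rank conditions) while the remaining coordinates are free. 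Taking the Zariski closure then replaces the full-rank conditions by no condition at all, and $X_i$ becomes a product of full matrix spaces — an affine space. The key point, as in the $\widetilde{D}_n$ case, is that at the extending vertex (and the "branch" vertices 3, 6) the maps along the relevant arrows can be used to gauge away coordinates, so that $X_i$ is parametrized freely by the coordinates on a subset of the arrows.

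The main obstacle I expect is bookkeeping: for $\widetilde{E}_6$ there are three non-homogeneous tubes of periods $2,3,3$, and the regular simples are genuinely two-dimensional at the central vertex 3, so the explicit matrix computation is more intricate than for $\widetilde{A}$ and $\widetilde{D}$ — in particular the central vertex $V_3 = k^2$ receives arrows from vertices 2, 4 and 6, and one must check that the rank conditions defining $\co$ indeed cut out a locally closed subset whose closure fills up the ambient product of matrix spaces. One should verify that the indecomposable direct summands $E'_1\oplus E'_2\oplus E'_3$ (resp.\ the $E''$'s) have $\ext$ among them vanishing appropriately so that the orbit of the direct sum is open in the corresponding $\star$-product; this is guaranteed because $E_1\oplus\cdots$ is a regular module whose dimension vector is $\delta$, lying in the radical of the Euler form, so $\lr{\udim E_i,\udim E_j}$ is controlled and the relevant $\hom$ and $\ext$ dimensions can be read off. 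Once the identification $X_i\cong\bba^{N_i}$ (or a vector bundle over such) is in place, strong purity is immediate, completing the proof; the constants $N_i$ can be recorded but play no role beyond confirming that $X_i$ is smooth.
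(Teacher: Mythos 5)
Your proposal does not match the paper's argument, and there is good reason to believe it cannot be carried out. For $\widetilde{A}_{n,1}$ and $\widetilde{D}_n$ the regular simple modules are \emph{thin} (all entries of their dimension vectors are $0$ or $1$), so the orbit of the semi-simple regular module lives inside a coordinate subspace of $\bbe_\az$ and its closure is visibly a product of matrix spaces. For $\widetilde{E}_6$ this fails: e.g.\ $E_1$ has dimension vector $(1,1,2,1,1,1,1)$ with a $2$ at the central vertex, and $\udim(E_1\oplus E_2)=\dz=(1,2,3,2,1,2,1)$. One checks $\dim\bbe_{\dz}=24$, $\dim G_{\dz}=24$, $\dim\ed(E_1\oplus E_2)=2$, so $\overline{\co}_{E_1\oplus E_2}$ is a codimension-$2$ subvariety of $\bba^{24}$, not cut out by simply forgetting arrows, and there is no reason for it to be an affine space or a vector bundle over one. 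The same issue already appears for the other two $\ct_i$ in $\widetilde{E}_6$ and is worse for $\widetilde{E}_7,\widetilde{E}_8$. Your plan of ``gauge away coordinates and take closure'' is exactly the mechanism that works in the thin case and breaks here; this is a conceptual gap, not just bookkeeping.

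The paper's proof of Lemma~\ref{l:5.2.3} uses a genuinely different idea. One writes $E_1 = S_1\diamond P(E_1)$ (a generic extension with $\co_{S_1\oplus P(E_1)}\subsetneqq\overline{\co}_{E_1}$), then constructs an explicit morphism $\phi:\bbe_\gz\to X$ which kills one entry of $x_{12}$; $\phi$ is smooth of relative dimension~$1$ and, after a dimension count using $\dim\ed(E_1\oplus E_2)=2$ and $\dim\ed(P(E_1)\oplus S_1\oplus E_2)=3$, one shows $\phi(\overline{\co}_{E_1\oplus E_2})=\overline{\co}_{P(E_1)\oplus S_1\oplus E_2}$. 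The module $P(E_1)\oplus S_1\oplus E_2$ is \emph{aperiodic}, so $IC(\overline{\co}_{P(E_1)\oplus S_1\oplus E_2},\overline{\bbq}_l)$ is strong pure by Lusztig's theorem (Theorem~5.4 of \cite{L4}); strong purity then pulls back along the smooth morphism $\phi$ by Lemma~\ref{l:5.1.4}. This replaces the direct geometric description you attempted with a reduction to the known aperiodic case, and is what makes the $E$-type lemmas tractable. To repair your proposal you would need either to actually exhibit $X_i$ as an affine space (which I believe is false), or to adopt this smooth-reduction-to-aperiodic strategy.
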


\begin{proof} Let $P(E_1), P(E_1')$ and $P(E_1'')$) be the indecomposable preprojective modules with dimension vectors $(0121111),(0111000)$ and $(0110010))$. Thanks to \cite{[DR]}, we have
$$ E_1=S_1\diamond P(E_1), E_1'=S_1\diamond P(E_1'), \text{~and~}E_1'' =S_1\diamond P(E_1''). $$
Moreover, $\co_{S_1\oplus P(E_1)}\subsetneqq\overline{\co}_{E_1}, \co_{S_1\oplus P(E_1')}\subsetneqq\overline{\co}_{E_1'}, \text{~and~} \co_{S_1\oplus P(E_1'')}\subsetneqq\overline{\co}_{E_1''}.$

We now only prove for $i=1.$ The other cases can be proved similarly. Let $V$ be the $I$-graded vector space
$V=\bigoplus_{i=1}^{7}V_i$ with $V_1=V_5=V_7=k,
V_2=V_4=V_6=k^2$ and $V_3=k^3$. Set $\gamma=(1,2,3,2,1,2,1)\in\bbn [I],$ then
\begin{eqnarray*}
\bbe_{\gamma}=\bbe_{V}&\hspace{-8pt}=\hspace{-8pt}&\{x|~x=(x_{12},x_{23},x_{43},x_{54},x_{63},x_{76}),\\
                    && x_{12},x_{54},x_{76}\in M_{21}(k),
x_{23},x_{43},x_{63}\in  M_{32}(k) \}.
\end{eqnarray*}
Thanks to \cite{[DR]}, we have $E_1\oplus E_2=M(y)$ for
$y\in\bbe_{\az}$ given by
\[y_{12}=y_{54}=y_{76}=\left(\begin{array}{l}
0\\
1\end{array}\right),~~ y_{23}=\left(\begin{array}{ll}
1&0\\
0&1\\
0&0\end{array}\right),~~y_{43}=\left(\begin{array}{ll}
1&0\\
0&0\\
0&1\end{array}\right),~~y_{63}=\left(\begin{array}{ll}
1&0\\
0&1\\
0&1\end{array}\right).\]

Let $\alpha=\underline{\dim}E_1,$ and $\beta= \underline{\dim}E_2,$ then $\gamma=\alpha+\beta.$ From {\bf 1.3}, we have the diagram
$$\xymatrix{\bbe_{\az}\times\bbe_{\bz} & \bbe' \ar[l]_(0.35){p_1}\ar[r]^{p_2}&\bbe''\ar[r]^{p_3}& \bbe_{\gz}.}$$

Let
 $x_{12}=\left[\begin{array}{l}
\lambda\\
\mu
\end{array}\right],$
We may define a homomorphsim
$$\varphi: \bbe'\longrightarrow \bbe'$$
by $\varphi(x,W,R',R'')=(y,W,R',R''),$ where $y_{12}=\left[\begin{array}{l}
\lambda\\
0
\end{array}\right], x_{23}=y_{23}, x_{43}=y_{43}, x_{54}=y_{54}, x_{63}=y_{63}, \text{~and~}x_{76}=y_{76}.$  Then there are the homomorphisms $\varphi_{\alpha\times\beta}, \varphi'',$ and $\phi$ induced by $\varphi.$ We therefore have a commutative diagram
\[
 \xymatrix{\bbe_\alpha\times\bbe_\beta\ar[d]^{\varphi_{\alpha\times\beta}}\ar[r]^(0.65){p_1} & \bbe'\ar[d]^{\varphi}\ar[r]^(0.65){p_2}&\bbe''\ar[d]^{\varphi''}\ar[r]^(0.65){p_3}&\bbe_\gamma\ar[d]^{\phi}\\
\bbe_\alpha\times\bbe_\beta \ar[r]^{p_1} &\bbe'\ar[r]^{p_2}&\bbe''\ar[r]^{p_2}&\bbe_\gamma.}
\]

Let $X=\phi(\bbe\gamma),$ we then get a morphism from $\bbe\gamma$ to $X$ which is also denoted by $\phi$. It is clear that $\phi$ is smooth with relative dimension $1.$

Since $\phi(E_1\oplus E_2)=P(E_1)\oplus S_1\oplus E_2,$ and $\phi$ is a closed map, we get $\phi(\co_{(E_1\oplus E_2)})\supseteq\co_{(p(E_1)\oplus S_1\oplus E_2)}$ and $\phi(\overline{\co}_{(E_1\oplus E_2)})\supseteq\overline{\co}_{(P(E_1)\oplus S_1\oplus E_2)}.$

 Since $\overline{\co}_{(E_1\oplus E_2)}$ and $\phi(\overline{\co}_{(E_1\oplus E_2)})$ are irreducible subvarieties, by the theorem of upper semicontinuity of dimension, we have
$$\dim \overline{\co}_{(E_1\oplus E_2)}-\dim\phi(\overline{\co}_{(E_1\oplus E_2)})=1 ,\text{~and~}
\dim\phi(\overline{\co}_{(E_1\oplus E_2)})= \dim\overline{\co}_{(E_1\oplus E_2)}-1.$$

Since
$$Hom_{\llz}(P(E_1),S_1)=Hom_{\llz}(E_2,S_1)=0, $$
and
$$\langle \underline{\dim}P(E_1), \underline{\dim}E_2\rangle=0= \dim Hom_{\llz}(P(E_1),E_2),$$
we have

$\dim End_{\llz}(E_1\oplus E_2)=2,$ and $\dim End_{\llz}(P(E_1)\oplus S_1\oplus E_2)=3.$

It follows that
$$\dim \overline{\co}_{(E_1\oplus E_2)}=\dim\overline{\co}_{(P(E_1)\oplus S_1\oplus E_2)}+1,\text{~and~}
\dim \overline{\co}_{(P(E_1)\oplus S_1\oplus E_2)}=\dim\phi(\overline{\co}_{(E_1\oplus E_2)}).$$

Since $\phi(\overline{\co}_{(E_1\oplus E_2)})$ is irreducible, we get
$\phi(\overline{\co}_{(E_1\oplus E_2)})=\overline{\co}_{(P(E_1)\oplus S_1\oplus E_2)}.$

Since $P(E_1)\oplus S_1\oplus E_2$ is aperiodic, $IC(\overline{\co}_{P(E_1)\oplus S_1\oplus E_2},\overline{\bbq}_l)$ is strong pure, by Theorem 5.4 in \cite{L4}.
Therefore, by Lemma 6.1.4, the statement of lemma is true.

\end{proof}

\subsection{Type $\widetilde{E}_{7}$}
Let $Q$ be the quiver
\[\xymatrix{&&&8\ar[d]&&&\\ 1\ar[r]&2\ar[r]&3\ar[r]&4&5\ar[l]&6\ar[l]&7\ar[l]}\]
There are three non-homogeneous tubes $\ct_1,\ct_2,\ct_3$, respectively of periods $2,3,4$. Let $E_1,E_2$ be the regular simples in $\ct_1$, let $E'_1,E'_2,E'_3$ be the regualr simples in $\ct_2$, and let $E''_1,E''_2,E''_3.E''_4$ be the regular simples in $\ct_3$. Their dimension vectors are given as follows
\begin{itemize}
\item[$\ct_1$:] $(1,1,2,2,1,1,0,1)$, $(0,1,1,2,2,1,1,1)$;
\item[$\ct_2$:] $(1,1,1,2,1,1,1,1)$, $(0,1,1,1,1,1,0,0)$, $(0,0,1,1,1,0,0,1)$;
\item[$\ct_3$:] $(1,1,1,1,1,0,0,0)$, $(0,1,1,1,0,0,0,1)$, $(0,0,1,1,1,1,1,0)$, $(0,0,0,1,1,1,0,1)$.
\end{itemize}

\begin{lemma}\label{l:5.2.4}
Set $X_1=\overline{\co}_{E_1\oplus E_2},
X_2=\overline{\co}_{E'_1\oplus E'_2\oplus E'_3},$ and
$X_3=\overline{\co}_{E''_1\oplus E''_2\oplus E''_3\oplus E''_4}.$
 Then $IC(X_i,\overline{\bbq}_l)$ is strong pure for any $i=1,2,3.$
\end{lemma}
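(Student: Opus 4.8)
The plan is to prove Lemma~\ref{l:5.2.4} by the same strategy already used for type $\widetilde{E}_6$ in Lemma~\ref{l:5.2.3}: for each of the three tubes $\ct_1,\ct_2,\ct_3$ we exhibit an explicit smooth morphism $\phi$ of small relative dimension from the ambient representation space $\bbe_\gamma$ (where $\gamma=\sum_i \underline{\dim}E_i$ is the sum of the dimension vectors of the regular simples in that tube) to a variety $X=\phi(\bbe_\gamma)$, such that $\phi$ carries the closure $\overline{\co}_{\bigoplus_i E_i}$ of the semisimple regular orbit onto the closure $\overline{\co}_{N}$ of an \emph{aperiodic} module $N$. Since $IC(\overline{\co}_N,\overline{\bbq}_l)$ is strong pure by Theorem~5.4 of~\cite{L4}, Lemma~\ref{l:5.1.4} then forces $IC(\overline{\co}_{\bigoplus_i E_i},\overline{\bbq}_l)$ to be strong pure as well. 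The arithmetic bookkeeping — matching dimensions of orbits using $\dim\co_M = \dim\bbe_{\underline{\dim}M}-\dim\ed_\llz(M)$, and checking $\langle\cdot,\cdot\rangle$ and $\Hom$ vanishing to pin down $\dim\ed$ — is exactly as in Lemma~\ref{l:5.2.3} and should go through verbatim.

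Concretely, I would first fix the $I$-graded space $V$ of dimension vector $\gamma$ for each tube and write down, following~\cite{[DR]}, an explicit point $y\in\bbe_\gamma$ with $M(y)\cong\bigoplus_i E_i$; for $\ct_1$ this is $\gamma=(1,2,3,4,2,2,1,2)$, for $\ct_2$ it is $\gamma=(1,2,3,4,3,2,1,2)$, and for $\ct_3$ it is $\gamma=(1,2,3,4,3,2,1,2)$ (the sum of the listed dimension vectors). Next I would locate, for each tube, the indecomposable preprojective $P(E_1')$ such that the regular simple $E_1$ (in that tube) is a generic extension $E_1 = S_1\diamond P(E_1')$, where $S_1$ is the simple at the source vertex $1$; the existence and the strict orbit containment $\co_{S_1\oplus P(E_1')}\subsetneq\overline{\co}_{E_1}$ come from~\cite{[DR]}. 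Then, mimicking the $\widetilde{E}_6$ argument, I would define $\varphi$ on $\bbe'$ by zeroing out one component of $x_{12}$, obtain the induced map $\phi:\bbe_\gamma\to X$ on the $p_3$-level, check $\phi$ is smooth of relative dimension $1$, and verify $\phi(\overline{\co}_{\bigoplus_i E_i}) = \overline{\co}_{P(E_1')\oplus S_1 \oplus (\bigoplus_{i\ge 2}E_i)}$ by the upper-semicontinuity/irreducibility dimension count. Finally, aperiodicity of $P(E_1')\oplus S_1\oplus(\bigoplus_{i\ge 2}E_i)$ — which holds because at most one regular simple summand per tube survives, so no full period is present — gives strong purity of its $IC$ sheaf, and Lemma~\ref{l:5.1.4} concludes.

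The main obstacle I anticipate is that for the tube $\ct_3$ of period $4$ in $\widetilde{E}_7$, a single application of the ``zero out one entry'' move $\phi$ may not suffice to land on an aperiodic module: after removing one copy of $E_1$ one is left with $E_2\oplus E_3\oplus E_4$, which is aperiodic as a $\ct_3$-module, so in fact one step should be enough — but the delicate part is verifying that the resulting generic-extension module $P(E_1')\oplus S_1\oplus E_2\oplus E_3\oplus E_4$ really has its orbit closure equal to $\phi(\overline{\co}_{\bigoplus_i E_i})$ rather than merely containing it. This requires the precise $\Hom$- and $\Ext$-vanishing between $P(E_1')$, $S_1$, and the remaining regular simples (in particular $\langle\underline{\dim}P(E_1'),\underline{\dim}E_j\rangle=\dim\Hom(P(E_1'),E_j)$ and $\Hom(E_j,S_1)=0$), which one reads off from Lemma~\ref{l:2.1.1} together with the explicit dimension vectors tabulated above. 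I would carry out this $\ed$-dimension comparison carefully for each of the three tubes; once it checks out, the smoothness of $\phi$ and the invocation of Theorem~5.4 of~\cite{L4} and Lemma~\ref{l:5.1.4} are routine.
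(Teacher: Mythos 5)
Your proposal follows essentially the same route as the paper's proof: for each of the three tubes one zeroes out one entry of $x_{12}$ to get a smooth map $\phi$ of relative dimension $1$ carrying $\overline{\co}_{\bigoplus_i E_i}$ onto $\overline{\co}_{P(E_1)\oplus S_1\oplus\bigoplus_{i\geq 2}E_i}$, which is the orbit closure of an aperiodic module, and then one invokes Theorem~5.4 of~\cite{L4} together with Lemma~\ref{l:5.1.4}; the paper carries out the details only for the period-$4$ tube and asserts the other two are analogous. One small slip: since $\gamma$ is always the sum of the regular-simple dimension vectors in a tube, all three $\gamma$'s equal $\delta=(1,2,3,4,3,2,1,2)$, not $(1,2,3,4,2,2,1,2)$ as you wrote for $\ct_1$.
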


\begin{proof}  Let $P(E_1), P(E_1')$ and $P(E_1'')$) be the indecomposable preprojective modules with dimension vectors $(01221101), (01121111)$ and $(01111000)).$ Thanks to \cite{[DR]}, we have
$$ E_1=S_1\diamond P(E_1), E_1'=S_1\diamond P(E_1'), \text{~and~}E_1'' =S_1\diamond P(E_1''). $$
Moreover, $\co_{S_1\oplus P(E_1)}\subsetneqq\overline{\co}_{E_1}, \co_{S_1\oplus P(E_1')}\subsetneqq\overline{\co}_{E_1'}, \text{~and~} \co_{S_1\oplus P(E_1'')}\subsetneqq\overline{\co}_{E_1''}.$

We now only prove for $i=3.$ The other cases can be proved
similarly. Let $V$ be the $I-$graded vector space
$V=\oplus_{i=1}^{8}V_i,$ and $V_1=V_7=k,
V_2=V_6=V_8=k^2,V_3=V_5=k^3,V_4=k^4.$ Set
$\gamma=(1,2,3,4,3,2,1,2)\in\bbn [I],$ then
\begin{eqnarray*}
\bbe_{\gamma}=\bbe_{V}&=&\{x|~x=(x_{12},x_{23},x_{34},x_{54},x_{65},x_{76},x_{84}),\\
                    && x_{12},x_{76}\in M_{21}(k),
x_{23},x_{65}\in  M_{32}(k), x_{34},x_{54}\in  M_{43}(k),x_{84}\in
M_{42}(k)\}.
\end{eqnarray*}
Thanks to \cite{[DR]}, we can get $E''_1\oplus E''_2\oplus
E''_3\oplus E''_4=M(y),$ where $y\in\bbe_{\az}$ and

$$\begin{array}{l}
y_{12}=y_{76}=\left[\begin{array}{l}
0\\
1\end{array}\right],~y_{23}=\left[\begin{array}{ll}
1&0\\
0&0\\
0&1\end{array}\right],~ y_{65}=\left[\begin{array}{ll}
1&0\\
0&1\\
0&0\end{array}\right], ~y_{84}=\left[\begin{array}{ll}
1&0\\
0&0\\
0&1\\
0&0\end{array}\right],\\
 y_{34}=\left[\begin{array}{lll}
1&0&0\\
0&1&0\\
0&0&0\\
0&0&1\end{array}\right],~y_{54}=\left[\begin{array}{lll}
0&0&0\\
1&0&0\\
0&1&0\\
0&0&1\end{array}\right].
\end{array}$$

Let $\alpha=\underline{\dim}E_1'',$ and $\beta= \sum_{i=2}^4\underline{\dim}E_i'',$ then $\gamma=\alpha+\beta.$ From {\bf 1.3}, we have the diagram
$$\xymatrix{\bbe_{\az}\times\bbe_{\bz} & \bbe' \ar[l]_(0.35){p_1}\ar[r]^{p_2}&\bbe''\ar[r]^{p_3}& \bbe_{\gz}.}$$

Let
 $x_{12}=\left[\begin{array}{l}
\lambda\\
\mu
\end{array}\right],$
 as in lemma 7.3.1, we may also define a homomorphsim
$$\varphi: \bbe'\longrightarrow \bbe'$$
by $\varphi(x,W,R',R'')=(y,W,R',R''),$ where $$y_{12}=\left[\begin{array}{l}
\lambda\\
0
\end{array}\right], x_{23}=y_{23}, x_{34}=y_{34}, x_{54}=y_{54}, x_{65}=y_{65}, x_{76}=y_{76},\text{~and~} x_{84}=y_{84}.$$ Then there are the homomorphisms $\varphi_{\alpha\times\beta},\varphi'',$ and $\phi$ induced by $\varphi.$ We therefore have a commutative diagram
\[
 \xymatrix{\bbe_\alpha\times\bbe_\beta\ar[d]^{\varphi_{\alpha\times\beta}}\ar[r]^(0.65){p_1} & \bbe'\ar[d]^{\varphi}\ar[r]^(0.65){p_2}&\bbe''\ar[d]^{\varphi''}\ar[r]^(0.65){p_3}&\bbe_\gamma\ar[d]^{\phi}\\
\bbe_\alpha\times\bbe_\beta \ar[r]^{p_1} &\bbe'\ar[r]^{p_2}&\bbe''\ar[r]^{p_2}&\bbe_\gamma.}
\]

Let $X=\phi(\bbe\gamma)$, we then get a morphism from $\bbe\gamma$ to $X$ which is also denoted by $\phi$. It is clear
that $\phi$ is smooth with relative dimension $1.$

Since $\phi(E_1''\oplus\oplus_{i=2}^4 E_i'')=P(E_1'')\oplus S_1\oplus \oplus_{i=2}^4 E_i'',$ and $\phi$ is a closed map, we get $\phi(\co_{(E_1''\oplus\oplus_{i=2}^4 E_i'')})\supseteq\co_{(P(E_1'')\oplus S_1\oplus \oplus_{i=2}^4 E_i'')}$ and $\phi(\overline{\co}_{(E_1''\oplus\oplus_{i=2}^4 E_i'')})\supseteq\overline{\co}_{(P(E_1'')\oplus S_1\oplus \oplus_{i=2}^4 E_i'')}.$

Since $\overline{\co}_{(E_1''\oplus\oplus_{i=2}^4 E_i'')}$ and $\phi(\overline{\co}_{E_1''\oplus\oplus_{i=2}^4 E_i'')})$ are irreducible subvarieties, by the theorem of upper semicontinuity of dimension, we have
$$\dim \overline{\co}_{(E_1''\oplus\oplus_{i=2}^4 E_i'')}-\dim\phi(\overline{\co}_{( E_1''\oplus\oplus_{i=2}^4 E_i'')})=1 ,\text{~and~}
\dim\phi(\overline{\co}_{(E_1''\oplus\oplus_{i=2}^4 E_i'')})= \dim\overline{\co}_{(E_1''\oplus\oplus_{i=2}^4 E_i'')}-1.$$

Since
$$Hom_{\llz}(P(E_1),S_1)=Hom_{\llz}(E_i'',S_1)=0,  i=2,3,4,$$
and
$$\langle \underline{\dim}P(E_1), \underline{\dim}E_i''\rangle=0= \dim Hom_{\llz}(P(E_1),E_i''), i=2,3,4,$$
we have

$\dim End_{\llz}(E_1''\oplus\oplus_{i=2}^4 E_i'')=4,$ and $\dim End_{\llz}(P(E_1'')\oplus S_1\oplus \oplus_{i=2}^4 E_i'')=5.$

It follows that
$$\dim \overline{\co}_{(E_1''\oplus\oplus_{i=2}^4 E_i'')}=\dim\overline{\co}_{(P(E_1'')\oplus S_1\oplus \oplus_{i=2}^4 E_i'')}+1,\text{~and~}
\dim \overline{\co}_{(P(E_1'')\oplus S_1\oplus \oplus_{i=2}^4 E_i'')}=\dim\phi(\overline{\co}_{(E_1''\oplus\oplus_{i=2}^4 E_i'')}).$$

Since $\phi(\overline{\co}_{(E_1''\oplus\oplus_{i=2}^4 E_i'')})$ is irreducible, we have $\phi(\overline{\co}_{(E_1''\oplus\oplus_{i=2}^4 E_i'')})=\overline{\co}_{(P(E_1'')\oplus S_1\oplus \oplus_{i=2}^4 E_i'')}.$

Since $P(E_1'')\oplus S_1\oplus \oplus_{i=2}^4 E_i''$ is aperiodic, $IC(\overline{\co}_{P(E_1'')\oplus S_1\oplus \oplus_{i=2}^4 E_i''},\overline{\bbq}_l)$ is strong pure, by Theorem 5.4 in \cite{L4}.
Therefore, by Lemma 6.1.4, the statement of lemma is true.

\end{proof}

\subsection{Type $\widetilde{E}_{8}$}
Let $Q$ be the quiver
\[ \xymatrix{&&9\ar[d]&&&&&\\ 1\ar[r]&2\ar[r]&3&4\ar[l]& 5\ar[l]& 6\ar[l]& 7\ar[l]&8\ar[l]}\]
There are three non-homogeneous tubes $\ct_1,\ct_2,\ct_3$, respectively of periods $2,3,5$. Let $E_1,E_2$ be the regular simples in $\ct_1$, let $E'_1,E'_2,E'_3$ be the regualr simples in $\ct_2$, and let $E''_1,E''_2,E''_3.E''_4,E''_5$ be the regular simples in $\ct_3$. Their dimension vectors are given as follows
\begin{itemize}
\item[$\ct_1$:] $(1,2,3,2,2,1,1,0,2)$, $(1,2,3,3,2,2,1,1,1)$;
\item[$\ct_2$:] $(1,2,2,1,1,1,0,0,1)$, $(0,1,2,2,2,1,1,1,1)$, $(1,1,2,2,1,1,1,0,1)$;
\item[$\ct_3$:] $(1,1,1,1,1,0,0,0,0)$, $(0,1,1,1,0,0,0,0,1)$, $(1,1,2,1,1,1,1,1,1)$,$(0,1,1,1,1,1,1,0,0)$,\\ $(0,0,1,1,1,1,0,0,1)$.
\end{itemize}

\begin{lemma}\label{l:5.2.5}
Set $X_1=\overline{\co}_{E_1\oplus E_2},
X_2=\overline{\co}_{E'_1\oplus E'_2\oplus E'_3},$ and
$X_3=\overline{\co}_{E''_1\oplus E''_2\oplus E''_3\oplus E''_4\oplus
E''_5}.$
Then $IC(X_i,\overline{\bbq}_l)$ has the purity property for any $i=1,2,3.$
\end{lemma}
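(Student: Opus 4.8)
The plan is to reproduce, tube by tube, the argument used for types $\widetilde E_6$ and $\widetilde E_7$ in Lemmas~\ref{l:5.2.3} and~\ref{l:5.2.4}; the only structural change is the choice of the vertex at which one ``unfolds''. For $\widetilde E_8$ the unique extending vertex is the far endpoint of the long arm, namely vertex $8$ (the only $v$ with $\delta_v=1$); it is a source, so the simple module $S_8$ is injective, hence preinjective. Since $\delta_8=1$, in each non-homogeneous tube $\ct_i$ there is exactly one regular simple $D_i$ whose dimension vector has entry $1$ at vertex $8$, while all other regular simples of $\ct_i$ vanish there: $D_1=E_2$ with $\udim D_1=(1,2,3,3,2,2,1,1,1)$, $D_2=E'_2$ with $\udim D_2=(0,1,2,2,2,1,1,1,1)$, $D_3=E''_3$ with $\udim D_3=(1,1,2,1,1,1,1,1,1)$. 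By~\cite{[DR]} one has $D_i=S_8\diamond P_i$ for an indecomposable preprojective $P_i$ with $\udim P_i=\udim D_i-\udim S_8$ (so the vertex-$8$ entry of $\udim P_i$ is $0$); explicitly $\udim P_1=(1,2,3,3,2,2,1,0,1)$, $\udim P_2=(0,1,2,2,2,1,1,0,1)$, $\udim P_3=(1,1,2,1,1,1,1,0,1)$, and $\co_{S_8\oplus P_i}\subsetneqq\overline{\co}_{D_i}$.

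First I would treat each $i\in\{1,2,3\}$ in turn. Write $M_i$ for the module defining $X_i$, so $M_1=E_1\oplus E_2$, $M_2=E'_1\oplus E'_2\oplus E'_3$, $M_3=E''_1\oplus\cdots\oplus E''_5$, each of dimension vector $\delta$, and let $M'_i$ be obtained from $M_i$ by replacing its summand $D_i$ by $P_i\oplus S_8$; then $\udim M'_i=\delta$, and the regular part of $M'_i$ equals $M_i$ with the summand $D_i$ removed, so it is an aperiodic regular module. Using~\cite{[DR]} one writes an explicit $y\in\bbe_\delta$ with $M(y)\cong M_i$, and considers Lusztig's diagram $\bbe_\alpha\times\bbe_\beta\xleftarrow{p_1}\bbe'\xrightarrow{p_2}\bbe''\xrightarrow{p_3}\bbe_\delta$ with $\alpha=\udim D_i$ and $\beta$ the dimension vector of the complementary summand. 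Because $\dim V_8=\delta_8=1$ while the target $V_7$ of the unique arrow out of vertex $8$ has $\dim V_7=\delta_7=2$, the block of a representation along that arrow is a $2\times1$ matrix $\binom{\lambda}{\mu}$; exactly as in Lemmas~\ref{l:5.2.3} and~\ref{l:5.2.4} I would define $\varphi:\bbe'\to\bbe'$ sending this block to $\binom{\lambda}{0}$ and fixing all other blocks, check that $\varphi$ descends along $p_1,p_2,p_3$ to morphisms $\varphi_{\alpha\times\beta}$, $\varphi''$ and $\phi$ forming the usual commutative square, set $X=\phi(\bbe_\delta)$, and observe that $\phi:\bbe_\delta\to X$ is smooth of relative dimension $1$.

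Because $\phi$ is closed and $\phi(M_i)=M'_i$ (killing the ``$\mu$''-component of the arrow at vertex $8$ splits off a copy of $S_8$ and leaves the preprojective $P_i$), one gets $\phi(\overline{\co}_{M_i})\supseteq\overline{\co}_{M'_i}$, and the reverse inclusion follows from a dimension count identical to that of Lemma~\ref{l:5.2.4}. Upper semicontinuity of fibre dimension gives $\dim\overline{\co}_{M_i}-\dim\phi(\overline{\co}_{M_i})=1$; on the other hand $\dim\ed_\llz(M'_i)=\dim\ed_\llz(M_i)+1$, because $\Hom_\llz(P_i,S_8)=0$ and $\Hom_\llz(R,S_8)=0$ for every regular simple $R\neq D_i$ of $\ct_i$ (these vanish since $S_8$ is injective and the vertex-$8$ entries of $\udim P_i$ and $\udim R$ are $0$), while $\Hom_\llz(S_8,P_i)=\Hom_\llz(S_8,R)=\Hom_\llz(R,P_i)=0$ and $\langle\udim P_i,\udim R\rangle=0=\dim\Hom_\llz(P_i,R)$ by Lemma~\ref{l:2.1.1}(b) together with the preprojectivity of $P_i$ and the preinjectivity of $S_8$. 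Hence $\dim\overline{\co}_{M_i}=\dim\overline{\co}_{M'_i}+1=\dim\phi(\overline{\co}_{M_i})$, and since $\phi(\overline{\co}_{M_i})$ is irreducible and contains the irreducible variety $\overline{\co}_{M'_i}$ of equal dimension, the two coincide. Finally $M'_i$ has the shape preprojective $\oplus$ aperiodic regular $\oplus$ preinjective, so $IC(\overline{\co}_{M'_i},\overline{\bbq}_l)$ is strong pure by Theorem~5.4 of~\cite{L4} (as already used for $\widetilde E_6$ and $\widetilde E_7$), and Lemma~\ref{l:5.1.4} applied to the smooth morphism $\phi$ yields that $IC(X_i,\overline{\bbq}_l)=IC(\overline{\co}_{M_i},\overline{\bbq}_l)$ is strong pure.

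The only genuine labour, and the main obstacle, is the bookkeeping for the period-$5$ tube $\ct_3$: one has to extract from~\cite{[DR]} the explicit matrix normal form of $E''_1\oplus E''_2\oplus E''_3\oplus E''_4\oplus E''_5$ as a point of $\bbe_\delta$ (the branch vertex $3$ now carries a $k^6$, so these matrices are bulkier than in the $\widetilde E_7$ case), confirm that $E''_3=S_8\diamond P_3$ with $P_3$ the indecomposable preprojective of dimension vector $(1,1,2,1,1,1,1,0,1)$, and verify the commutativity of the full Lusztig square under $\varphi$; no new phenomenon occurs, so the dimension count and the appeals to~\cite{L4} and Lemma~\ref{l:5.1.4} carry over verbatim to all of $i=1,2,3$.
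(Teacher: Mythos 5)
Your proposal reproduces the paper's own argument: unfold at the unique extending vertex $8$ (since $\delta_8=1$), pick out the unique regular simple $D_i$ in $\ct_i$ with nonzero entry at vertex $8$, write $D_i=S_8\diamond P_i$ with $P_i$ preprojective, use the map $\varphi$ that kills the $\mu$-component of $x_{87}$ to obtain a smooth $\phi$ of relative dimension~$1$ onto $\overline{\co}_{M'_i}$ (confirmed by the same $\dim\ed$ count), and then appeal to Theorem~5.4 of~\cite{L4} for the aperiodic $M'_i$ together with Lemma~\ref{l:5.1.4}. The only tiny caveat is that $\langle\udim P_i,\udim R\rangle=0$ for $R$ a regular simple of $\ct_i$ distinct from $D_i$ is a direct computation rather than a formal consequence of Lemma~\ref{l:2.1.1}(b) alone (the paper likewise simply asserts it), but the identity is true in every case, so your argument is sound.
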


\begin{proof} Let $P(E_2), P(E_2')$ and $P(E_3'')$) be the indecomposable preprojective modules with dimension vectors $(123322101), (012221101)$ and $(112111101)).$ Thanks to \cite{[DR]}, we have
$$ E_2=S_8\diamond P(E_2), E_2'=S_8\diamond P(E_2'), \text{~and~}E_3'' =S_8\diamond P(E_3''). $$
Moreover, $\co_{S_8\oplus P(E_2)}\subsetneqq\overline{\co}_{E_2}, \co_{S_8\oplus P(E_2')}\subsetneqq\overline{\co}_{E_2'}, \text{~and~} \co_{S_8\oplus P(E_3'')}\subsetneqq\overline{\co}_{E_3''}.$

We now only prove for $i=1.$ The other cases can be proved
similarly. Let $V$ be the $I-$graded vector space
$V=\oplus_{i=1}^{9}V_i,$ and $V_1=V_7=k^2,
V_2=V_5=k^4,V_3=k^6,V_4=k^5,V_6=V_9=k^3,V_8=k.$ Set
$\gamma=(2,4,6,5,4,3,2,1,3)\in\bbn [I],$ then
\begin{eqnarray*}
\bbe_{\gamma}=\bbe_{V}&=&\{x|~x=(x_{12},x_{23},x_{43},x_{54},x_{65},x_{76},x_{87},x_{93}),\\
                    && x_{12}\in M_{42}(k),x_{23}\in
M_{64}(k),x_{43}\in M_{65}(k) ,x_{54}\in M_{54}(k) ,x_{65}\in
M_{43}(k) ,x_{76}\in M_{32}(k),\\
&&x_{87}\in M_{21}(k),x_{93}\in M_{63}(k)\}.
\end{eqnarray*}
Thanks to \cite{[DR]}, we can get $E_1\oplus E_2=M(y),$ where
$y\in\bbe_{\gamma}$ and

$$\begin{array}{l}
y_{12}=\left[\begin{array}{ll} 0&0\\1&0\\0&0\\0&1
\end{array}\right],~y_{23}=\left[\begin{array}{llll}
0&0&0&0\\
1&0&0&0\\
0&1&0&0\\
0&0&0&0\\
0&0&1&0\\
0&0&0&1\end{array}\right],~ y_{43}=\left[\begin{array}{lllll}
1&0&0&0&0\\
0&1&0&0&0\\
0&0&0&0&0\\
0&0&1&0&0\\
0&0&0&1&0\\
0&0&0&0&1\end{array}\right], ~y_{54}=\left[\begin{array}{llll}
1&0&0&0\\
0&1&0&0\\
0&0&1&0\\
0&0&0&1\\
0&0&0&0\end{array}\right],\\
 y_{65}=\left[\begin{array}{lll}
1&0&0\\
0&0&0\\
0&1&0\\
0&0&1\end{array}\right],~y_{76}=\left[\begin{array}{ll}
1&0\\
0&1\\
0&0\end{array}\right],~y_{87}=\left[\begin{array}{l}
0\\
1\end{array}\right],~y_{93}=\left[\begin{array}{lll}
1&0&0\\
1&1&0\\
0&1&0\\
0&0&1\\
0&0&1\\
0&0&1.\end{array}\right].
\end{array}$$

Let $\alpha=\underline{\dim}E_2,$ and $\beta= \underline{\dim}E_1,$ then $\gamma=\alpha+\beta.$ From {\bf 1.3}, we have the diagram
$$\xymatrix{\bbe_{\az}\times\bbe_{\bz} & \bbe' \ar[l]_(0.35){p_1}\ar[r]^{p_2}&\bbe''\ar[r]^{p_3}& \bbe_{\gz}.}$$

Let
 $x_{87}=\left[\begin{array}{l}
\lambda\\
\mu
\end{array}\right],$
we define a homomorphsim
$$\varphi: \bbe'\longrightarrow \bbe'$$
by $\varphi(x,W,R',R'')=(y,W,R',R''),$ where $y_{87}=\left[\begin{array}{l}
\lambda\\
0
\end{array}\right], x_{12}=y_{12}, x_{23}=y_{23}, x_{43}=y_{43}, x_{54}=y_{54}, x_{65}=y_{65},x_{76}=y_{76}, \text{~and~}x_{93}=y_{93}.$  Then there are the homomorphisms $\varphi_{\alpha\times\beta}, \varphi'',$ and $\phi$ induced by $\varphi.$ We therefore have a commutative diagram
\[
 \xymatrix{\bbe_\alpha\times\bbe_\beta\ar[d]^{\varphi_{\alpha\times\beta}}\ar[r]^(0.65){p_1} & \bbe'\ar[d]^{\varphi}\ar[r]^(0.65){p_2}&\bbe''\ar[d]^{\varphi''}\ar[r]^(0.65){p_3}&\bbe_\gamma\ar[d]^{\phi}\\
\bbe_\alpha\times\bbe_\beta \ar[r]^{p_1} &\bbe'\ar[r]^{p_2}&\bbe''\ar[r]^{p_2}&\bbe_\gamma.}
\]

Let $X=\phi(\bbe\gamma)$, we then get a morphism from $\bbe\gamma$ to $X$ which is also denoted by $\phi$. It is clear
that $\phi$ is smooth with relative dimension $1.$

Since $\phi(E_2\oplus E_1)=P(E_2)\oplus S_8\oplus E_1,$ and $\phi$ is a closed map, we get $\phi(\co_{(E_1\oplus E_2)})\supseteq\co_{(p(E_2)\oplus S_8\oplus E_1)}$ and $\phi(\overline{\co}_{(E_1\oplus E_2)})\supseteq\overline{\co}_{(P(E_2)\oplus S_8\oplus E_1)}.$

Since $\overline{\co}_{(E_1\oplus E_2)}$ and $\phi(\overline{\co}_{(E_1\oplus E_2)})$ are irreducible subvarieties, by the theorem of upper semicontinuity of dimension, we have
$$\dim \overline{\co}_{(E_1\oplus E_2)}-\dim\phi(\overline{\co}_{(E_1\oplus E_2)})=1 ,\text{~and~}
\dim\phi(\overline{\co}_{(E_1\oplus E_2)})= \dim\overline{\co}_{(E_1\oplus E_2)}-1.$$

Since
$$Hom_{\llz}(P(E_2),S_8)=Hom_{\llz}(E_1,S_8)=0, $$
and
$$\langle \underline{\dim}P(E_2), \underline{\dim}E_1\rangle=0= \dim Hom_{\llz}(P(E_2),E_1),$$
we have

$\dim End_{\llz}(E_1\oplus E_2)=2,$ and $\dim End_{\llz}(P(E_2)\oplus S_8\oplus E_1)=3.$

It follows that
$$\dim \overline{\co}_{(E_1\oplus E_2)}=\dim\overline{\co}_{(P(E_2)\oplus S_8\oplus E_1)}+1,\text{~and~}
\dim \overline{\co}_{(P(E_2)\oplus S_8\oplus E_1)}=\dim\phi(\overline{\co}_{(E_1\oplus E_2)}).$$

Since $\phi(\overline{\co}_{(E_1\oplus E_2)})$ is irreducible, we get $\phi(\overline{\co}_{(E_1\oplus E_2)})=\overline{\co}_{(P(E_2)\oplus S_8\oplus E_1)}.$

Since $P(E_2)\oplus S_8\oplus E_1$ is aperiodic, $IC(\overline{\co}_{P(E_2)\oplus S_8\oplus E_1},\overline{\bbq}_l)$ is strong pure, by Theorem 5.4 in \cite{L4}.
Therefore, by Lemma 6.1.4, the proof is complete.
\end{proof}

Furthermore,  we can also prove that the closure of
orbits of semi-simple objects are strong pure for the type $E_6, E_7,$ and $E_8$.

\section{$IC({\overline{\co}_{P,M,{\bf
w},I}},\overline{\bbq}_l))$ are strong pure}

In this section, we will show that $IC({\overline{\co}_{P,M,{\bf
w},I}},\overline{\bbq}_l))$ is strong pure, that is, the aim of this subsection is to prove Theorem~\ref{t:5.1.5}.

\subsection{}
 From \cite{L4},\cite{L5} and \cite{LXZ}, we know that
$IC({\overline{\co}_{P,\pi_1,\ldots,\pi_l,{\bf
w},I}},\overline{\bbq}_l)$ is strong pure. For $M\in \bbe_\alpha,$ let $\co_M\subset \bbe_\alpha$ be the $G_\alpha-$orbit of $M.$ We take $\mathbf{1}_M\in\bbc_G(V_\alpha)$ to be the characteristic function of $\co_M,$ and set $f_M=v_q^{-dim\co_M}\mathbf{1}_M.$ We
consider the algebra $(\mathbf{L},*)$ generated by $f_M$ over $\bbq[v_q,{v_q}^{-1}],$ for all $M\in\bbe_\alpha$ and all $\alpha\in{\bbn} I.$

\begin{proposition}\cite{LXZ} {\sl The linear map $\chi: \ch^*(\llz)\rightarrow (\mathbf{L},*)$  defined by
$$\chi(\langle M\rangle)=f_M, \text{~ for all~} M\in\cp.$$
is an isomorphism of associative $\bbq[v_q,{v_q}^{-1}]-$algebras.~}\end{proposition}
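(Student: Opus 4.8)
\emph{Proof strategy.} The statement is the integral refinement of the inverse of Proposition~\ref{p:1.3.1}: the map $\chi$ is nothing but $\varphi^{-1}$, together with the observation that it identifies the natural $\bbq[v_q,v_q^{-1}]$-forms on the two sides. First I would recall that, in the notation of Section~\ref{ss:construction-lusztig}, $f_M=f_{[M]}=v_q^{-\dim\co_M}\bi_{[M]}$, so that the assignment $f_{[M]}\mapsto\lr{M}$ of Proposition~\ref{p:1.3.1} and the assignment $\lr{M}\mapsto f_M$ of the present statement are mutually inverse bijections between the $\bbq(v_q)$-basis $\{f_{[M]}:[M]\in\cp\}$ of $({\bf L},*)$ and the $\bbq(v_q)$-basis $\{\lr{M}:[M]\in\cp\}$ of $\ch^*(\llz)$. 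Hence, after extending scalars to $\bbq(v_q)$, $\chi=\varphi^{-1}$ is automatically an isomorphism of associative $\bbq(v_q)$-algebras; what remains is to check that $\chi$ carries the $\bbq[v_q,v_q^{-1}]$-form of $\ch^*(\llz)$ isomorphically onto $({\bf L},*)$ as defined over $\bbq[v_q,v_q^{-1}]$ in the present section.

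On the Ringel--Hall side I would observe that the $\bbq[v_q,v_q^{-1}]$-submodule of $\ch^*(\llz)$ spanned by $\{\lr{M}:[M]\in\cp\}$ coincides with the one spanned by $\{u_{[M]}:[M]\in\cp\}$, since $\lr{M}=v^{-\dim M+\dim\ed_\llz(M)}u_{[M]}$ and $v^{\pm1}$ is a unit in $\bbq[v,v^{-1}]$; moreover this submodule is a subalgebra, because the structure constants $v^{\lr{\udim M,\udim N}}g^L_{MN}$ of $\ch^*(\llz)$ lie in $\bbz[v,v^{-1}]$. On the geometric side, $({\bf L},*)$ is by construction the $\bbq[v_q,v_q^{-1}]$-subalgebra of $({\bf K},*)$ generated by the $f_M$; using the identity $\bi_{[M]}\circ\bi_{[N]}=\sum_{[W]}g^W_{MN}\bi_{[W]}$ recalled just before Proposition~\ref{p:1.3.1} together with the definition of $*$, one obtains
\[
f_M * f_N=\sum_{[W]}v_q^{\,\dim\co_W-\dim\co_M-\dim\co_N-\wm(\udim M,\udim N)}\,g^W_{MN}\,f_W ,
\]
whose coefficients again lie in $\bbz[v_q,v_q^{-1}]$. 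Thus the $\bbq[v_q,v_q^{-1}]$-span of $\{f_M:[M]\in\cp\}$ is already closed under $*$, hence equals $({\bf L},*)$, and $\{f_M:[M]\in\cp\}$ is a $\bbq[v_q,v_q^{-1}]$-basis of it by $\bbq(v_q)$-linear independence.

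To conclude, since $\chi$ sends the $\bbq[v_q,v_q^{-1}]$-basis $\{\lr{M}\}$ of the integral form of $\ch^*(\llz)$ bijectively onto the $\bbq[v_q,v_q^{-1}]$-basis $\{f_M\}$ of $({\bf L},*)$, it is a $\bbq[v_q,v_q^{-1}]$-module isomorphism; being the restriction of the $\bbq(v_q)$-algebra isomorphism $\varphi^{-1}$, it is multiplicative, and therefore an isomorphism of associative $\bbq[v_q,v_q^{-1}]$-algebras.

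The only substantial input is Proposition~\ref{p:1.3.1} itself, in particular the matching of the Hall twist $v^{\lr{\udim M,\udim N}}$ with the geometric twist $v_q^{-\wm(\udim M,\udim N)}$ after the normalizations by $\dim\co_\bullet$ and $\dim\ed_\llz(\bullet)$ and using that $g^L_{MN}\neq 0$ forces $\udim L=\udim M+\udim N$. Everything added above --- identifying the two integral forms and reading off that the coefficients are Laurent polynomials --- is routine bookkeeping; if one insisted on a self-contained argument, re-deriving that twist identity (via $\dim\co_M=\dim G_{\udim M}-\dim\ed_\llz(M)$ and $c_i^2-a_i^2-b_i^2=2a_ib_i$ for $c_i=a_i+b_i$) would be the only real computation, hence the only place where care is needed.
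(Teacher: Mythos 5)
Your proof is correct: since the paper does not present its own argument for this statement (it is imported verbatim from~\cite{LXZ}, exactly as Proposition~\ref{p:1.3.1} is), the natural verification is precisely the one you give --- observe that $\chi$ is $\varphi^{-1}$ over $\bbq(v_q)$, and then check that both integral forms are stable under multiplication (structure constants in $\bbz[v_q,v_q^{-1}]$ on each side) so that $\chi$ restricts to a $\bbq[v_q,v_q^{-1}]$-algebra isomorphism. The single genuine computation you flag at the end --- matching $v^{\lr{\udim M,\udim N}}$ after the $\langle M\rangle$-normalization against $v_q^{-\wm(\udim M,\udim N)}$ after the $f_M$-normalization, using $\dim\co_M=\dim G_{\udim M}-\dim\ed_\llz(M)$ and $c_i^2-a_i^2-b_i^2=2a_ib_i$ --- is indeed the only nontrivial step, and it checks out: both exponents reduce to $\dim\ed(M)+\dim\ed(N)-\dim\ed(L)+\lr{\udim M,\udim N}$.
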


\begin{lemma} {\sl Let $M \in \ct_i$ and let $Z=\overline{\co}_M$. Then $IC(Z,\overline{\bbq}_l)$ is strong pure.~}
\end{lemma}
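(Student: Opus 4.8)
The plan is to deduce the lemma from two cases that are already in place: the case where $M$ is \emph{aperiodic}, for which $IC(\overline{\co}_M,\overline{\bbq}_l)$ is strong pure by Theorem~5.4 of~\cite{L4}, and the case where $M$ is \emph{regular semi-simple} (a direct sum of regular simples in $\ct_i$), which is precisely what Lemmas~\ref{l:5.2.1}--\ref{l:5.2.5} of Section~7 give (and where, in types $\widetilde A$ and $\widetilde D$, $\overline{\co}_M$ is even an affine space). The passage from the general $M$ to these two cases will use the generic--extension calculus of Proposition~\ref{p:5.1.2}, the smooth--pullback principle of Lemma~\ref{l:5.1.4}, and Lusztig's induction (convolution) diagram from Section~\ref{ss:construction-lusztig}.

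Let $0=M_0\subset M_1\subset\cdots\subset M_s=M$ be the filtration of $M$ by its regular radical series, so that each layer $S_j:=M_j/M_{j-1}$ is a regular semi-simple module lying in $\ct_i$. Using the uniseriality of the tube $\ct_i$ and Lemma~\ref{l:2.1.1}(d), one checks that $M$ is the generic iterated extension of $S_1,\dots,S_s$; hence $\co_M$ is open and dense in the irreducible variety $\co_{S_1}\star\cdots\star\co_{S_s}$ (irreducibility by Lemma~\ref{l:5.1.1}), and therefore
\[Z=\overline{\co}_M=\overline{\co_{S_1}\star\cdots\star\co_{S_s}}.\]

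Next, using the maps $p_1,p_2,p_3$ of Section~\ref{ss:construction-lusztig}, form the iterated convolution $IC(\overline{\co}_{S_1},\overline{\bbq}_l)\ast\cdots\ast IC(\overline{\co}_{S_s},\overline{\bbq}_l)$. Its support is exactly $Z$, and by the decomposition theorem it is a finite direct sum of shifts of intersection complexes $IC(\overline{\co}_N,\overline{\bbq}_l)$ with $N\leq_{deg}M$; among the summands of full support one finds $IC(Z,\overline{\bbq}_l)$ itself (tensored, should the generic fibre of $p_3$ over $\co_M$ be positive--dimensional, with the Tate, even cohomology of that ``flag''-type fibre). Since $p_1$ is a smooth locally trivial fibration, $p_2$ a principal bundle, and $p_3$ proper, and since each factor $IC(\overline{\co}_{S_j},\overline{\bbq}_l)$ is strong pure by Section~7, strong purity propagates: $p_1^{\ast}$ preserves the class of ``even, Tate, correctly twisted'' complexes, this class descends along the torsor $p_2$, and $(p_3)_!=(p_3)_{\ast}$ preserves it by Deligne's purity theorem together with the decomposition theorem, every direct summand of a strong--pure complex being again strong pure. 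Hence $IC(Z,\overline{\bbq}_l)$, a summand of the convolution, is strong pure.

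The genuine obstacle is the last step: controlling the \emph{refined} purity of Definition~\ref{d:5.1.3} --- eigenvalues of Frobenius equal to $p^{ir/2}$, not merely of complex absolute value $\leq p^{ir/2}$ --- under $(p_3)_!$. Deligne's theorem only yields the inequality, and the equality forces one to know that the whole convolution is of Tate type with cohomology sheaves concentrated in even degrees; this is exactly the input that must be imported from Section~7 and from~\cite{L4}. A secondary point to verify carefully is the uniseriality claim that $M$ is the generic iterated extension of its regular radical layers. For the exceptional types one may instead avoid the convolution and mimic the explicit argument of Lemmas~\ref{l:5.2.3}--\ref{l:5.2.5}: split off the simple $S_e$ at an extending vertex from a regular layer of $M$ by a linear projection $\phi$ that is smooth of relative dimension one with $\phi(\overline{\co}_M)=\overline{\co}_{M'}$ where $M'$ has strictly smaller periodic part, and then conclude by induction using Lemma~\ref{l:5.1.4} and Theorem~5.4 of~\cite{L4}.
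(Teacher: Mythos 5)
Your proposal takes a genuinely different route from the paper, and it stalls at precisely the point you yourself flag. Where you iterate the generic-extension decomposition along the full regular radical series $S_1,\dots,S_s$ and then argue sheaf-theoretically (convolve the $IC$'s of the layers, invoke the decomposition theorem, extract $IC(Z)$ as a full-support summand), the paper performs a single step $M=M_1\diamond M_2$ with $M_1$ regular semisimple (citing [GJ] Prop.~3.4) and $M_2$ smaller, and then works entirely on the function side of the sheaf--function dictionary: it pushes the identity $b_{\overline{\co}_{M_1}}*b_{\overline{\co}_{M_2}}=b_{\overline{\co}_M}+\sum(\cdots)\,b_{\overline{\co}_X}$ through the isomorphism $\chi$, uses the Hall polynomials of Section~5 to see that the resulting trace identity is a Laurent-polynomial identity in $\sqrt p^{\,r}$ valid for all $r$, and finally separates the traces degree by degree using very purity of $\overline{\co}_M$, obtained from Deligne's theorem and the Kazhdan--Lusztig cone criterion (the two unlabelled lemmas before Definition~\ref{d:5.1.3}).

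The step you correctly identify as ``the genuine obstacle'' is the crux, and your argument does not cross it. Deligne's weight theorem gives that $(p_3)_!$ of a (pointwise) pure complex is pure --- eigenvalues of $Fr^{*r}$ have the right \emph{absolute values} --- but strong purity in the sense of Definition~\ref{d:5.1.3} is a Tate-type statement, requiring the eigenvalues to \emph{equal} $p^{ir/2}$. Proper pushforward does not automatically preserve Tate-ness, and ``import from Section~7 and from [L4]'' does not close this: Section~7 only treats regular semisimple $M$, and Lusztig's~\cite{L4} Theorem~5.4 only covers aperiodic orbit closures, neither of which supplies Tate-ness of the convolution in the periodic case. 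The paper's way across is the combination of (i) the very-purity of $\overline{\co}_M$ coming from the $\mathbb{G}_m$-cone criterion, which places the Frobenius eigenvalues on distinct cohomological sheaves $\ch^i_N$ in disjoint absolute-value classes, with (ii) the Hall-polynomial identity $\mathrm{Tr}((Fr^*)^r,\ch^*_N(\overline{\co}_M))=a_N((\sqrt p)^r,(\sqrt p)^{-r})$ holding for all $r\geq 1$; together these force the trace in each degree, and hence the eigenvalues, to be powers of $\sqrt p$. No analogue of this appears in your proposal, so the proof is not merely missing a citation but an argument. The filtration set-up, the use of Lemma~\ref{l:5.1.4}, and the alternative split-off reduction for the exceptional types are all sound as scaffolding, but without the very-purity/trace-splitting mechanism the conclusion that $IC(Z,\overline{\bbq}_l)$ is strong pure (rather than merely pure) is not established.
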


\begin{proof} According to Proposition 3.4 in \cite{GJ}, there is an regular semi-simple object $M_1$ in $\ct_i$ such that $g^M_{M_1 M_2}=1,$ and $M_1\diamond M_2=M.$

 By Lemma 2.3(vi) in \cite{LXZ} and \cite{GJ}, we have
 \begin{eqnarray}<M_1>\ast <M_2> &=& <M>+\sum_{Y,\overline{\co}_Y\subsetneqq \overline{\co}_M} a_Y<Y>,\end{eqnarray}
 where $a_Y=v^{-\dim\co_{M_1}-\dim\co_{M_2}+\dim\co_{Y}-{\bf m}(\underline{\dim}M_1,\underline{\dim}M_2)}\varphi^Y_{M_1M_2}(v^2)\in \bbz [v,v^{-1}]$

 Based on section 7, we known that $M_1$ is strong pure, and by induction hypothesis, we get that $M_2$ is strong pure. Let $q=p^r, r\in\bbn, $ then we have

\begin{eqnarray}
b_{\overline{\co}_{M_1}}&=&
\langle M_1\rangle+\sum_{i,\overline{\co}_N\subsetneqq\overline{\co}_{M_1}}v^{i+\dim\co_N-\dim\co_{M_1}}\dim
\ch^i_{N}(IC({\overline{\co}_{M_1}},\overline{\bbq}_l))\lr{N},\\
b_{\overline{\co}_{M_2}}&=& \langle M_2\rangle+
\sum_{i,\overline{\co}_N\subsetneqq \overline{\co}_{M_2}}v^{i+\dim\co_N-\dim\co_{M_2}}\dim
\ch^i_{N}(IC({\overline{\co}_{M_2}},\overline{\bbq}_l))\lr{N},\text{~and}\\
\chi(b_{\overline{\co}_{M}})&=& \sum_{\overline{\co}_N\subsetneqq\overline{\co}_{M}}(-\sqrt{p}^r)^{-\dim\co_M+\dim\co_N}Tr((Fr^*)^r,\ch^*_N(\overline{\co}_{M}))f_N.
\end{eqnarray}

According to \cite{L4}, (14), and Proposition 6.1.1 , we have
\begin{eqnarray}
 b_{\overline{\co}_{M_1}}*b_{\overline{\co}_{M_2}}&\stackrel{{[L4],(14)}}=&b_{\overline{\co}_{M}}+\sum_{\overline{\co}_X\subsetneqq\overline{\co}_M}(\sum_{j\in\bbz}\dim D^j_{P_{M_1},P_{M_2},P_{X}}v^j)b_{\overline{\co}_{X}}\\
b_{\overline{\co}_{M}}&=&b_{\overline{\co}_{M_1}}*b_{\overline{\co}_{M_2}}-\sum_{\overline{\co}_X\subsetneqq\overline{\co}_ M}(\sum_{j\in\bbz}\dim D^j_{P_{M_1},P_{M_2},P_{X}}v^j)b_{\overline{\co}_{X}}.
\end{eqnarray}

We take 

$A_1=\sum_{i,\overline{\co}_{N_1}\subsetneqq\overline{\co}_{M_1}}v^{i+\dim\co_{N_1}-\dim\co_{M_1}}\dim
\ch^i_{N_1}(IC({\overline{\co}_{M_1}},\overline{\bbq}_l))\lr{N_1}, \\
{\quad}A_2=\sum_{j,\overline{\co}_{N_2}\subsetneqq \overline{\co}_{M_2}}v^{j+\dim\co_{N_2}-\dim\co_{M_2}}\dim
\ch^j_{{N_2}}(IC({\overline{\co}_{M_2}},\overline{\bbq}_l))\lr{{N_2}},\\
{\quad}\theta_Y=v^{-\dim\co_{M_1}-\dim\co_{M_2}+\dim\co_{Y}-{\bf m}(\underline{\dim}M_1,\underline{\dim}M_2)}\varphi^Y_{M_1M_2}(v^2),\\
{\quad}\lambda_Y=v^{-\dim\co_{M_1}-\dim\co_{M_2}+\dim\co_{Y}-{\bf m}(\underline{\dim}M_1,\underline{\dim}M_2)}(\sum_{j,\overline{\co}_{N_2}\subsetneqq \overline{\co}_{M_2}}v^j\dim\ch^j_{N_2}(\overline{\co}_{M_2})\varphi^Y_{M_1N_2}(v^2)),\\
{\quad}\mu_Y=v^{-\dim\co_{M_1}-\dim\co_{M_2}+\dim\co_{Y}-{\bf m}(\underline{\dim}M_1,\underline{\dim}M_2)}(\sum_{i,\overline{\co}_{N_1}\subsetneqq \overline{\co}_{M_1}}v^i\dim\ch^i_{N_1}(\overline{\co}_{M_1})\varphi^Y_{N_1M_2}(v^2)),\\
{\quad}\nu_Y=v^{-\dim\co_{M_1}-\dim\co_{M_2}+\dim\co_{Y}-{\bf m}(\underline{\dim}M_1,\underline{\dim}M_2)}\\
{\quad\quad\quad\quad\quad\quad\quad\quad}(\sum_{i,j,\overline{\co}_{N_1}\subsetneqq \overline{\co}_{M_1}, \overline{\co}_{N_2}\subsetneqq \overline{\co}_{M_2}}v^{i+j}\dim\ch^i_{N_1}(\overline{\co}_{M_1})\dim\ch^j_{N_2}(\overline{\co}_{
M_2})
\varphi^Y_{N_1N_2}(v^2)).$

By Theorem 5.1.1, we have 

$ b_{\overline{\co}_{M_1}}*b_{\overline{\co}_{M_2}}=(\langle M_1\rangle+A_1)
*(\langle M_2\rangle+
A_2)=\langle M_1\rangle*\langle M_2\rangle+\langle M_1\rangle*A_2+A_1*\langle M_2\rangle+A_1*A_2.
$

Thus
\begin{eqnarray} b_{\overline{\co}_{M_1}}*b_{\overline{\co}_{M_2}}&=&\langle M\rangle+\sum_{Y,\overline{\co}_Y \subsetneqq\overline{\co}_M}\sum_Y(\theta_Y+\lambda_Y+\mu_Y+\nu_Y)\langle Y\rangle
\end{eqnarray}

where $c_Y=\theta_Y+\lambda_Y+\mu_Y+\nu_Y\in\bbq[v,v^{-1}].$

By induction hypothesis, $IC(\overline{\co}_{X},\overline{\bbq}_l)$ are strong pure. We get that
\begin{eqnarray}
\chi(b_{\overline{\co}_{M_1}})&=&
f_{M_1}+\sum_{i,\overline{\co}_N\subsetneqq \overline{\co}_{M_1}}(-\sqrt{p}^r)^{i+\dim\co_N-\dim\co_{M_1}}\dim
\ch^i_{N}(IC({\overline{\co}_{M_1}},\overline{\bbq}_l))f_{N},\\
\chi(b_{\overline{\co}_{M_2}})&=& f_{M_2}+
\sum_{i,\overline{\co}_N\subsetneqq \overline{\co}_{M_2}}(-\sqrt{p}^r)^{i+\dim\co_N-\dim\co_{M_2}}\dim
\ch^i_{N}(IC({\overline{\co}_{M_2}},\overline{\bbq}_l))f_N,\\
\chi(b_{\overline{\co}_{X}})&=& f_{X}+
\sum_{i,\overline{\co}_Y\subsetneqq\overline{\co}_ X}(-\sqrt{p}^r)^{i+\dim\co_Y-\dim\co_{X}}\dim
\ch^i_{Y}(IC({\overline{\co}_{X}},\overline{\bbq}_l))f_Y.
\end{eqnarray}

By Proposition 8.1.1, we have
$$\chi(b_{\overline{\co}_{M}})=\chi(b_{\overline{\co}_{M_1}})*\chi(b_{\overline{\co}_{M_2}})-\sum_{\overline{\co}_X\subsetneqq\overline{\co}_ M}(\sum_{j\in\bbz}\dim D^j_{P_{M_1},P_{M_2},P_{X}}v^j))\chi(b_{\overline{\co}_{X}}),$$

From (17), (20), (21), (22), (23), and Theorem 5.1.1, we get

$\sum_{\overline{\co}_N\subsetneqq \overline{\co}_{M}}((-\sqrt{p})^r)^{-\dim\co_M+\dim\co_N}Tr((Fr^*)^r,\ch^*_N(\overline{\co}_{M}))f_N
\\{\quad\quad\quad\quad\quad\quad\quad}=\sum_{\overline{\co}_N\subsetneqq \overline{\co}_{M}}a_N((\sqrt{p})^r,(\sqrt{p})^{-r})f_N, 
$

where $a_N\in\bbq[\sqrt{p},\sqrt{p}^{-1}], a_X(v,v^{-1})$ is Laurent polynomial.

Moreover,
\begin{eqnarray}
((-\sqrt{p})^r)^{-\dim\co_M+\dim\co_N}Tr((Fr^*)^r,\ch^*_N(\overline{\co}_{M}))
= a_N((\sqrt{p})^r,(\sqrt{p})^{-r}),
\end{eqnarray}
where $a_N\in\bbq[\sqrt{p},\sqrt{p}^{-1}],$ for all $r\geq 1.$

According to Lemma 6.1.2 and 6.1.3, we known that $IC(\overline{\co}_{M},\overline{\bbq}_l)$ is very pure. If $\lambda'$ (resp. $\lambda''$) is an eigenvalue of $Fr^{*}$ on $\ch_N^i(\overline{\co}_M)$ (resp. on $\ch_N^j(\overline{\co}_M)$) we have $|\lambda'|=p^{i/2}, |\lambda''|=p^{j/2}.$ In particular, $\lambda'\neq\lambda'',$ unless $i=j.$

It follows that the identity (24) must split into several identities of the form
$$ Tr ((Fr^{*})^r,\ch_N^i(\overline{\co}_M))=a_i(\sqrt{p})^r), r\geq 1, i\geq 0,$$
where $a_i\in\bbq[\sqrt{p},\sqrt{p}^{-1}]$ is independent of $r.$ The Lemma is proved.
\end{proof}

Let $\mathbf{U}_n^{+}$ be the Hall algebra of nilpotent representations of the cyclic quiver. From Lemma 8.1.1, we may show that the basis of $\mathbf{U}_n^{+}$ defined by \cite{VV} is Lusztig's canonical bases of $\mathbf{U}_n^{+}.$
\begin{corollary} Let $\mathbf{U}_n^{+}$ as above, and let
$$\mathbf{B}_n =\{
\sum_{i,\overline{\co}_N\subsetneqq\overline{\co}_{M}}v^{i+\dim\co_N-\dim\co_{M}}\dim
\ch^i_{N}(IC({\overline{\co}_{M}},\overline{\bbq}_l))\lr{N}| M\in \bbe_\alpha,\alpha\in\bbn I\},$$
then $\mathbf{B}_n $ is a canonical bases of $\mathbf{U}_n^{+}.$
\end{corollary}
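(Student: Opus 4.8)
The plan is to transport the result of Lemma~8.1.1 from the rational Ringel--Hall algebra of a suitable tame quiver down to the cyclic-quiver Hall algebra $\mathbf{U}_n^+$, using the equivalence between nilpotent representations of the cyclic quiver and the unique non-homogeneous tube of the tame quiver $\widetilde{A}_{n,1}$. First I would recall from Section~\ref{s:4} and the type $\widetilde{A}_{n,1}$ discussion that for $Q$ the quiver of type $\widetilde{A}_{n,1}$ there is exactly one non-homogeneous tube $\ct_1$, of period $n$, and that the abelian category $\ct_1$ (together with all its finite direct sums of objects) is equivalent to the category of nilpotent representations of the cyclic quiver on $n$ vertices; this equivalence identifies the twisted Hall algebra $\mathbf{U}_n^+$ with the subalgebra of $\ch^{*}(\llz)$ spanned by $\{u_{[M]}: M\in\ct_1\}$, compatibly with the multiplication constants (the Euler form restricted to dimension vectors that are supported inside a tube behaves as the cyclic-quiver Euler form, since $\dz$ lies in the radical). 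Under this identification the orbit closures $\overline{\co}_M$ for $M\in\bbe_\az$ a nilpotent cyclic-quiver representation correspond to the orbit closures $\overline{\co}_M$ inside $\bbe_\az$ for $M\in\ct_1$ appearing in Lemma~8.1.1.

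Next I would observe that the elements defining $\mathbf{B}_n$, namely
$$
b_{\overline{\co}_M}=\sum_{i,N}v^{i+\dim\co_N-\dim\co_M}\dim\ch^i_N\bigl(IC(\overline{\co}_M,\overline{\bbq}_l)\bigr)\lr{N},
$$
are precisely the elements shown to be strong pure by Lemma~8.1.1 (the lemma is proved there for all $M\in\ct_i$ in any non-homogeneous tube, in particular for the unique tube of $\widetilde{A}_{n,1}$, and by the reduction in that lemma it suffices to treat the case where the strong-purity input for the semisimple objects comes from Section~\ref{l:5.2.1}). Strong purity means that all eigenvalues of $(Fr^r)^*$ on the stalk cohomology of $IC(\overline{\co}_M,\overline{\bbq}_l)$ are $p^{ir/2}$, so that under the isomorphism $\chi:\ch^*(\llz)\xrightarrow{\sim}(\mathbf L,*)$ of Proposition~8.1 the image $\chi(b_{\overline{\co}_M})$ is exactly the trace-of-Frobenius/characteristic-function combination that Lusztig uses to characterize the canonical basis element attached to the simple perverse sheaf $IC(\overline{\co}_M,\overline{\bbq}_l)$. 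The standard properties of intersection cohomology complexes — the leading term is $\lr{M}$ with coefficient $1$, the other terms $\lr{N}$ occur only for $\co_N\subsetneqq\overline{\co}_M$, bar-invariance, and integrality of coefficients — then say that $\{b_{\overline{\co}_M}:M\in\bbe_\az,\az\in\bbn I\}$ satisfies the defining axioms of Lusztig's canonical basis of $\mathbf{U}_n^+$; uniqueness of the canonical basis finishes the identification.

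The one point that needs a little care, and which I expect to be the main obstacle, is checking that the geometric/perverse-sheaf framework used for the whole tame quiver restricts correctly to the cyclic-quiver situation: one must verify that the varieties $\bbe_\az$, the $G_\az$-orbits, and the intersection cohomology sheaves $IC(\overline{\co}_M,\overline{\bbq}_l)$ that appear in the definition of $\mathbf{B}_n$ for the cyclic quiver literally agree (not merely abstractly, but as the same geometric objects) with those occurring in Lemma~8.1.1 for modules in the tube $\ct_1$ of $\widetilde{A}_{n,1}$ — i.e.\ that the closed embedding of representation varieties induced by $\ct_1\hookrightarrow\mod\llz$ is compatible with taking $IC$ complexes and Frobenius actions. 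Granting that (which follows from the fact that a full exact embedding of module categories preserves orbit closures and their singularities, hence their $IC$ sheaves, and Frobenius structures by Lang's Lemma as in Section~\ref{ss:construction-lusztig}), the Corollary follows immediately from Lemma~8.1.1 together with Lusztig's characterization of the canonical basis, and I would conclude that $\mathbf{B}_n$ coincides with the basis of $\mathbf{U}_n^+$ constructed in~\cite{VV}.
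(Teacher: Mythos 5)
Your approach matches the paper's in spirit: both derive the corollary from Lemma~8.1.1, using the identification of the nilpotent representation category of the cyclic quiver with a non-homogeneous tube of a tame quiver. The paper, however, offers no proof beyond the one-sentence remark preceding the corollary, so the transfer step you flag is indeed left implicit there. Your instinct to worry about it is correct — it is a genuine issue, not a triviality — but the justification you offer does not close it.

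The specific problem: your claim that ``a full exact embedding of module categories preserves orbit closures and their singularities, hence their $IC$ sheaves, and Frobenius structures'' is not literally true and cannot be used as stated. The orbit closure $\overline{\co}_M$ for $M\in\ct_1$ lives in $\bbe_\gamma(Q)$ for the tame quiver $Q$; the orbit closure $\overline{\co}_{M'}$ for the corresponding nilpotent cyclic-quiver module $M'$ lives in $\bbe_\alpha$ of the cyclic quiver. These are different varieties of different dimensions in different ambient spaces — they are not ``preserved'' by the embedding, and a full exact functor does not by itself carry any geometric content. What one actually needs is the (non-trivial) theorem that the corresponding orbit closures have smoothly equivalent singularities — a result in the circle of ideas of Bongartz and Zwara on degenerations under representation embeddings — which then implies the stalk cohomology of the two $IC$ complexes agree up to shift, compatibly with Frobenius. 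If you want to run the transfer argument, you must cite and apply that theorem explicitly; the assertion as you wrote it would not survive scrutiny.

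There is also a cleaner route that avoids the transfer altogether, and which I suspect is closer to what the author had in mind: rerun the proof of Lemma~8.1.1 directly inside the cyclic-quiver representation variety. The inductive step of that proof — splitting off a regular semisimple direct summand $M_1$ with $M=M_1\diamond M_2$, using Guo's Hall polynomials \cite{GJ} and the generic extension calculus — is an intrinsic statement about the cyclic quiver and needs no reference to a tame quiver. The only place Lemma~8.1.1 invokes the ambient tame-quiver geometry is the base case, ``$M_1$ is strongly pure by Section~7.'' For the cyclic quiver that base case is trivial: the semisimple nilpotent module is the zero representation, whose $G_\alpha$-orbit closure in $\bbe_\alpha$ is the single point $\{0\}$, on which the $IC$ complex is obviously strongly pure. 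With this substitution the entire argument goes through verbatim on $\bbe_\alpha$, yielding strong purity of $IC(\overline{\co}_M,\overline{\bbq}_l)$ for every nilpotent cyclic-quiver module $M$, from which the canonical-basis axioms (unitriangularity with leading coefficient $1$, bar-invariance, integrality) follow exactly as you outline. This is both simpler and more robust than invoking a singularity-equivalence theorem.
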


\subsection{Proof of Theorem~\ref{t:5.1.5}}

 Let $X=\overline{\co}_{P,M,{\bf
w},I}$ as in Theorem 6.1.1. By \cite{L4}, we known that $IC({\overline{\co}_P}),IC({\overline{\co}_{\cn_{{\bf w}}}}),\text{~and } IC({\overline{\co}_I})$ are strong pure. From Lemma 8.1.1, we get $IC({\overline{\co}_M})$ is strong pure.

Since $\co_{P,M,{\bf
w},I} = \co_P\star\co_M\star\cn_{{\bf w},3}\star\co_I.$ In the same way as Lemma 8.1.1, Theorem 6.1.1 are proved by induction on dimension vector and on orbits order.

\begin{corollary} Let $X_1,X_2,$ and $X_3$ as Theorem 5.1.1, then $\varphi_{X_1X_2}^{X_3}(x)\in\bbz[x].$
\end{corollary}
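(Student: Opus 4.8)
The plan is to compute the structure constant behind the Hall polynomial in the canonical basis ${\bf CB}'$ of $\ch^{r,x_1,\ldots,x_t,+}$ and to use the integrality (and $\bbf_q$-independence) of that basis. Recall from the proof of Theorem~\ref{t:8.1.2} that $\varphi_{X_1X_2}^{X_3}(v^2)=v^{\,d}\,a_{12}^3(v)$, where $d=\dim\ed(X_3)-\dim\ed(X_1)-\dim\ed(X_2)-\lr{\udim X_1,\udim X_2}$ is a fixed integer (the endomorphism dimensions being combinatorial, hence independent of $\bbf_q$) and $a_{12}^3(v)$ is the coefficient of $\lr{X_3}$ in the expansion of $\lr{X_1}*\lr{X_2}$ in the basis $\{\lr{N}\}$ of $\ch^*(\llz)$, and that $\varphi_{X_1X_2}^{X_3}(x)\in\bbq[x]$. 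Since $d\in\bbz$, once we know that $a_{12}^3(v)$ (computed in the generic, $\bbf_q$-independent way described below) lies in $\bbz[v,v^{-1}]$ we get $\varphi_{X_1X_2}^{X_3}(v^2)\in\bbz[v,v^{-1}]$; combined with $\varphi_{X_1X_2}^{X_3}(x)\in\bbq[x]$, comparing coefficients of the monomials $v^{2k}$ then forces $\varphi_{X_1X_2}^{X_3}(x)\in\bbz[x]$. So everything reduces to showing $a_{12}^3(v)\in\bbz[v,v^{-1}]$.

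First I would observe that each $\lr{X_i}$ is itself a PBW basis vector of ${\bf B}'$. Indeed, for $X_i=P_i\oplus R_i\oplus H_i\oplus I_i$ with $P_i$ preprojective, $R_i$ in the non-homogeneous tubes, $H_i\in\bigoplus_{j=1}^t\mathscr{H}_{x_j}$, and $I_i$ preinjective, Lemma~\ref{l:2.1.1}(b) gives $\Hom(R_i\oplus H_i\oplus I_i,P_i)=\Hom(I_i,R_i\oplus H_i)=0$ and $\ext(P_i,R_i\oplus H_i\oplus I_i)=\ext(R_i\oplus H_i,I_i)=0$, so both Hall products collapse to a single term and $\lr{X_i}=\lr{P_i}*\lr{R_i\oplus H_i}*\lr{I_i}$; this is exactly the element of ${\bf B}'$ attached to $(P_i,\ R_i\oplus H_i,\ {\bf w}=\emptyset,\ I_i)$ (empty partition, $E_{\emptyset\delta,3}=1$). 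In particular $\lr{X_3}$ is the leading term of the canonical basis vector $b_{\overline{\co}_{X_3}}\in{\bf CB}'$ attached to the same data.

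Next I would move among the three bases $\{\lr{N}\}$, ${\bf B}'$, ${\bf CB}'$. By (\ref{f:7.2.1}) and Definition~\ref{d:5.1.3}, the transition matrix from ${\bf CB}'$ to $\{\lr{N}\}$ is unitriangular for the order $\leq_{deg}$, with entries $\sum_i v^{\,i+\dim\co_N-\dim\co}\dim\ch^i_N(IC(\overline{\co},\overline{\bbq}_l))$ that lie in $\bbz[v,v^{-1}]$ and are independent of $\bbf_q$ — $\bbf_q$-independence and the very validity of (\ref{f:7.2.1}) being exactly what the strong purity of Theorem~\ref{t:5.1.5} (and its Corollary for $\ch^{r,x_1,\ldots,x_t}$) provides. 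Since ${\bf B}'$ is likewise unitriangular to $\{\lr{N}\}$ with the same leading module in each row, the transition matrix from ${\bf B}'$ to ${\bf CB}'$ is unitriangular with entries in $\bbz[v,v^{-1}]$ as well. Moreover the structure constants of $\ch^{r,x_1,\ldots,x_t,+}$ in the basis ${\bf CB}'$ are $\bbf_q$-independent and lie in $\bbz[v,v^{-1}]$: on the composition-algebra direct factor this is Lusztig's integrality theorem for the canonical basis (\cite{L4},\cite{L5}), and Proposition~\ref{p:2.2.2}(b) reduces the remaining directions to the polynomial ring on the generators $E_{m\delta}$, which already lie in the integral form $\cc^*(\llz)_{\cz}$, $\cz=\bbq[v,v^{-1}]$, by Section~\ref{ss:4.1}. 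Hence, writing $\lr{X_1}$ and $\lr{X_2}$ in ${\bf CB}'$, multiplying, and rewriting the product back in $\{\lr{N}\}$ yields $\lr{X_1}*\lr{X_2}=\sum_N f_N(v)\lr{N}$ with every $f_N(v)\in\bbz[v,v^{-1}]$; so $a_{12}^3(v)=f_{X_3}(v)\in\bbz[v,v^{-1}]$, which completes the proof by the first paragraph.

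The step I expect to be the real obstacle is the integrality and $\bbf_q$-independence of the ${\bf CB}'$-structure constants in this \emph{extended} setting, i.e.\ that $\ch^{r,x_1,\ldots,x_t,+}$ carries a $\bbz[v,v^{-1}]$-form on which ${\bf CB}'$ restricts to a $\bbz[v,v^{-1}]$-basis closed under multiplication. For the composition part it is Lusztig's theorem; the delicate point is its compatibility with the tube generators and with the polynomial generators $E_{m\delta}$ of (\ref{f:8.1.1})--(\ref{f:8.1.3}), which I would settle by combining the explicit integral formulas of Section~\ref{ss:4.1} with the multiplicativity of Hall polynomials (Theorem~\ref{t:8.1.2}, \cite{H}) and re-running, within the $\bbz[v,v^{-1}]$-form, the induction on dimension vector and orbit order carried out in Lemma~8.1.1 and in the proof of Theorem~\ref{t:5.1.5}. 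Everything else is unitriangular-matrix bookkeeping over $\bbz[v,v^{-1}]$.
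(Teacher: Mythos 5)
Your proposal follows essentially the same route as the paper's proof: observe that each $\lr{X_j}$ is unitriangular over $\bbz[v,v^{-1}]$ with respect to the canonical basis ${\bf CB}'$ (via the stalk dimensions of the strong-pure $IC$ sheaves from Theorem~\ref{t:5.1.5} and its corollary), multiply the two expansions, and invoke the integrality of canonical-basis structure constants from \cite{L4} to conclude $a_{12}^3(v)\in\bbz[v,v^{-1}]$, whence $\varphi_{X_1X_2}^{X_3}(x)\in\bbz[x]$. The paper compresses the last step to a citation of \cite{L4}; your writeup spells out the unitriangular bookkeeping and correctly flags the integrality/$\bbf_q$-independence of the ${\bf CB}'$ structure constants as the point where the real content (Lusztig's theorem plus the reduction of Proposition~\ref{p:2.2.2}(b)) enters, but this is a clarification of, not a departure from, the paper's argument.
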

\begin{proof} Let $X=\overline{\co}_{P,M,{\bf
w},I},$ and $\langle X\rangle=\lr{P}*\lr{M}*E_{\textbf{w}\dz,3}*\lr{I}.$
By the corollary 6.2.1, we have
\begin{eqnarray}b_{\overline{\co}_{X_j}} =\langle X_j\rangle+\sum_{i,X\subsetneqq{\overline{\co}_{X_j}}}v^{i+\dim X-\dim\co_{X_j}}\dim
\ch^i_{X}(IC({\overline{\co}_{X_j}},\overline{\bbq}_l))\lr{X}.
\end{eqnarray}
Thus $\langle X_j\rangle=
b_{\overline{\co}_{X_j}}+\sum_{X\subsetneqq{\overline{\co}_{X_j}}}a_Xb_X, a_X\in \bbz[v,v^{-1}].$ Moreover, we have
\begin{eqnarray}\langle X_1\rangle*\langle X_2\rangle=(b_{\overline{\co}_{X_1}}+\sum_{X\subsetneqq{\overline{\co}_{X_1}}}a_Xb_X)*(b_{\overline{\co}_{X_2}}+\sum_{X\subsetneqq{\overline{\co}_{X_2}}}a_Xb_X)
\end{eqnarray}
 From  \cite{L4}, it follows that $\varphi_{X_1X_2}^{X_3}(x)\in\bbz[x].$
\end{proof}

\bigskip

\end{document}